\documentclass[11pt]{article} 
\synctex=1
\usepackage{amssymb, amsmath, amsthm, mathtools, mathrsfs} 
\usepackage[x11names]{xcolor}
\usepackage[utf8]{inputenc}
\usepackage{natbib} 
\usepackage{geometry} 
\usepackage{todonotes} 
\usepackage{enumerate} 
\usepackage{empheq} 
\usepackage{tikz-cd}
\usepackage{tcolorbox}
\usepackage{comment} 
\usepackage{appendix} 
\usepackage{import} 
\usepackage{url}
\usepackage{caption}
\usepackage{subcaption}

\usepackage{booktabs}
\usepackage{longtable}
\usepackage{array}
\usepackage{multirow}
\usepackage{wrapfig}
\usepackage{float}
\usepackage{colortbl}
\usepackage{pdflscape}
\usepackage{tabu}
\usepackage{threeparttable}
\usepackage{threeparttablex}
\usepackage[normalem]{ulem}
\usepackage{makecell}
\usepackage{xcolor}

\specialcomment{noobs}%
{\begin{tcolorbox}[before upper={\parindent15pt}, %
		colback=blue!5!white, %
		colframe=blue!50!white 
	]%
	\color{blue}\small}%
{\end{tcolorbox}} 

\excludecomment{noobs} 


\newcommand{\R}{\mathbb{R}} 
\newcommand{\N}{\mathbb{N}} 
\newcommand{\Z}{\mathbb{Z}} 
\newcommand{\Field}{\mathbb{F}} 
\newcommand{\im}{\iota} 

\newcommand{\Prob}{\mathbb{P}} 
\newcommand{\E}{\mathbb{E}} 
\newcommand{\Var}{\ensuremath{\mathrm{Var}}} 
\newcommand{\inner}[2]{\ensuremath{\left\langle #1,#2 \right\rangle}} 
\newcommand{\innersmall}[2]{\ensuremath{\langle #1,#2 \rangle}} 
\newcommand{\norm}[1]{\vert\vert#1\vert\vert} 
\newcommand{\abs}[1]{|#1|} 
\newcommand{\normBIG}[1]{\left\vert\left\vert#1\right\vert\right\vert} 
\newcommand{\absBIG}[1]{\left|#1\right|} 
\newcommand{\ind}{\ensuremath\boldsymbol{1}} 
\newcommand{\weakcvgto}{\ensuremath\rightsquigarrow} 
\newcommand{\BigOhPee}{\ensuremath\mathcal{O}_\Prob} 
\newcommand{\BigOh}{\ensuremath\mathcal{O}} 

\newcommand{\Efr}{\ensuremath\mathsf{erf}} 
\newcommand{\Efrc}{\ensuremath\mathsf{efrc}} 

\newcommand{\Ker}{\ensuremath\mathrm{ker}\,} 
\newcommand{\ident}{\ensuremath\mathrm{id}} 

\newcommand{\diff}[1]{\,\mathrm{d}#1}

\renewcommand{\widehat}{\hat}
\newcommand{\HSpace}{\ensuremath\mathscr{H}} 
\newcommand{\BddOp}{\ensuremath\mathscr{B}(\HSpace)} 
\newcommand{\CompactOp}{\ensuremath\mathscr{B}_0(\HSpace)} 
\newcommand{\ballH}{\ensuremath\mathrm{ball}\,\HSpace} 
\newcommand{\EmpOptDomain}{\ensuremath \widehat{\mathcal{J}}_n} 
\newcommand{\OptDomain}{\ensuremath \mathcal{J}_n} 
\newcommand{\FnOptDomain}{\ensuremath \mathcal{G}_n} 
\newcommand{\Gn}{\ensuremath \mathbb{G}_n} 

\newcommand{\KeyCvgRateStepI}{\ensuremath k_n^{11/2} (\log k_n)^{3/2}}
\newcommand{\KeyCvgRateStepII}{\ensuremath \frac{k_n^{13/2} (\log k_n)^{9/2} (\log n)}{\gamma^{1/2} n^{1/2}} + \frac{k_n^{15/4} (\log k_n)^{9/4} (\log n)^{3/4}}{\gamma^{1/2} n^{1/4}} + \frac{k_n^{17/6} (\log k_n)^{3/2} (\log n)^{2/3}}{\gamma^{1/3} n^{1/6}}  } 
\newcommand{\KeyCvgRateStepIIsimplified}{\ensuremath \frac{k_n^{13/2} (\log k_n)^{9/2} (\log n)}{n^{1/6}} }

\newcommand{\KeyCvgRateLTStatistic}{\ensuremath \frac{\KeyCvgRateStepI}{\beta_n}  + \KeyCvgRateStepIIsimplified } 
\newcommand{\SuffCondCvgRateLTStatistic}{\ensuremath \frac{k_n^{13/2} (\log k_n)^{9/2} (\log n) }{\min\{ \beta_n, n^{1/6} \}  } } 
\newcommand{\SuffCondEmpCvgRateLTStatistic}{\ensuremath \frac{\hat{k}_n^{13/2}
(\log \hat{k}_n)^{9/2} (\log n) }{\min\{ \beta_n, n^{1/6} \}  } }

\newcommand{\C}{\mathbb{C}} 


\graphicspath{{./plots/}}


\theoremstyle{plain}
\newtheorem{thm}{Theorem}[section]
\newtheorem{lem}[thm]{Lemma}
\newtheorem{prop}[thm]{Proposition}

\theoremstyle{definition}
\newtheorem{defn}{Definition}[section]

\theoremstyle{remark}
\newtheorem{rem}{Remark}[section]

\newtheoremstyle{named}{}{}{\itshape}{}{\bfseries}{.}{.5em}{\thmnote{#3 }#1}
\theoremstyle{named}

\theoremstyle{plain}
\newtheorem*{empiricalimp*}{Empirical Hypothesis}
\newtheorem*{thm*}{Theorem}
\newtheorem*{exercise*}{Exercise} 
\newtheorem*{example*}{Example} 
\newtheorem*{discussion*}{Discussion} 
\newtheorem*{claim*}{Claim}
\newtheorem*{lem*}{Lemma}
\newtheorem*{prop*}{Proposition}
\newtheorem*{cor*}{Corollary}
\newtheorem*{defn*}{Definition}

\theoremstyle{remark}
\newtheorem*{rem*}{Remark}
\newtheorem*{note*}{Note}
\newtheorem{case*}{Case}

\theoremstyle{plain}
\newtheorem*{algorithm*}{Algorithm}

\theoremstyle{plain}
\newtheorem{assumption}{Assumption}


\begin{document}

\title{A Small-Uniform Statistic for the Inference of Functional Linear Regressions}
\author{Raymond C. W. Leung and Yu-Man Tam}
\date{February 21, 2021}
\maketitle

\begin{abstract}
	We propose a ``small-uniform'' statistic for the inference of the functional PCA estimator in a functional linear regression model. The literature has shown two extreme behaviors: on the one hand, the FPCA estimator does not converge in distribution in its norm topology; but on the other hand, the FPCA estimator does have a pointwise asymptotic normal distribution. Our statistic takes a middle ground between these two extremes: after a suitable rate normalization, our small-uniform statistic is constructed as the maximizer of a fractional programming problem of the FPCA estimator over a finite-dimensional subspace, and whose dimensions will grow with sample size. We show the rate for which our scalar statistic converges in probability to the supremum of a Gaussian process. The small-uniform statistic has applications in hypothesis testing. Simulations show our statistic has comparable to slightly better power properties for hypothesis testing than the two statistics of Cardot, Ferraty, Mas and Sarda (2003). 

\end{abstract} 

{\bf Keywords and phrases:} Empirical process, functional data analysis, functional linear model, functional principal components estimator, Gaussian processes, hypothesis testing, supremum.
\newpage

The \emph{functional linear model} (FLM) and its associated \emph{functional principal components estimator} (FPCA estimator) are now staples in the statistics literature. However, while much is known about the FPCA's mean squared error convergence and consistency properties, much less is known about its asymptotic distributional properties. In particular, although there are hypothesis testing procedures on the FLM, the literature has few hypothesis testing procedures of the FLM that are explicitly based on the FPCA slope estimate. This dearth of hypothesis testing procedures based on the estimator of the model is in stark contrast to its finite-dimensional counterpart; for instance, ordinary least squares is both an estimator of the slope and also the input of the $t$-tests, $F$-tests and many others in tests of the finite-dimensional linear model. 

This paper has two main objectives. Firstly, we introduce a \emph{small-uniform statistic} that is constructed out of a normalized fractional programming problem of the FPCA estimator. Theorem~\ref{thm:NormalizedLeungTamStatisticConvergence} is the main result of this paper and shows our small-uniform statistic converges in probability to a supremum of a Gaussian process. This result is the basis for a hypothesis testing procedure that explicitly depends on the FPCA estimator. Secondly, we show in numerical simulations the hypothesis testing procedure based off of our small-uniform statistic has comparable to slightly better power properties than the two statistics proposed in \cite{cardot2003testing}.

The key references of our paper are \cite{cardot2007clt, cardot2003testing} and \cite{chernozhukov2014gaussian}. In particular, \cite{cardot2003testing} and \cite{hilgert2013minimax} are one of the first few studies for conducting hypothesis testing on the FLM. However, as far as we understand, none of these studies base their hypothesis testing procedure on the FPCA estimator. Recently, \cite{cuesta2019goodness} has proposed an interesting goodness-of-fit test of the FLM based on random projections, and a step in its testing procedure does indeed depend on the FPCA estimator. Roughly speaking, the testing procedure of \cite{cuesta2019goodness} is dependent on a single randomly drawn vector (i.e.\ a ``direction'') of the functional regressors' underlying Hilbert space. To smooth out the uncertainty in just drawing a single direction, the authors recommend drawing multiple directions to thus conduct several hypothesis tests, and the final inference step is concluded by a multiple hypothesis testing correction (see their Algorithms 4.1 and 4.2). In contrast and intuitively, our small-uniform statistic considers finitely many (but that number increases with the sample size) of these directions, and then look for the ``largest'' direction. Thus our small-uniform statistic is a single scalar and does not require multiple hypothesis testing corrections. \cite{ramsay2005functional} is the well-known seminal survey of the functional data analysis (FDA) literature. \cite{cardot2011functional}, \cite{horvath2012inference}, \cite{hsing2015theoretical}, \cite{goia2016introduction} and \cite{wang2016functional} are some recent surveys on the advancements of the FDA literature. 

Section~\ref{sec:FLM} fixes notations for the FLM and reviews the two extreme asymptotic behavior of the FPCA estimator as documented by \cite{cardot2007clt}. Section~\ref{sec:SmallUniformStatistic} introduces our small-uniform statistic. Section~\ref{sec:HypothesisTesting} outlines the hypothesis testing procedure based off of our small-uniform statistic, and Section~\ref{sec:NumericalSimulations} shows some simulated numerical results. We conclude in Section~\ref{sec:Conclusion}. The proofs are technical in nature and thus we gather them in the Supplementary Materials \cite{leung2021suppsmalluniform}.

\section{Functional linear model} 
\label{sec:FLM}
Let's begin with the standard \emph{functional linear model}. Throughout this paper, we will fix a sufficiently rich probability space $(\Omega, \mathcal{F}, \Prob)$ that accommodates all the random quantities in this paper. Let $\HSpace$ be an arbitrary real separable infinite dimensional Hilbert space equipped with an inner product $\inner{\cdot}{\cdot}$ and denote its norm as $\norm{\cdot}$. Let, 
\begin{equation} 
	Y = \inner{\rho}{X} + \varepsilon,
	\label{eq:TheModel} 
\end{equation}
where $Y$ is a real valued scalar dependent variable, $X$ is $\HSpace$-valued random element, and $\rho$ is an $\HSpace$-valued coefficient vector. Moreover, $\varepsilon$ is a scalar error term 
such that $\E[ \varepsilon | X ] = 0$ and $\E[ \varepsilon^2 | X ] = \sigma_\varepsilon^2$. We are interested in the estimation and subsequent inference of the coefficient vector $\rho$. 

Let's define the usual covariance and cross-covariance operators. For any $x_1, x_2 \in \HSpace$, we denote their \emph{tensor product} as $x_1 \otimes x_2(h) := \inner{x_1}{h} x_2$ for all $h \in \HSpace$. We denote the \emph{covariance operator} of $X$ as $\Gamma : \HSpace \to \HSpace$, 
\begin{equation}
	\Gamma h := \E[ X \otimes X(h) ], \quad h \in \HSpace  
	\label{eq:CovarianceOperator} 
\end{equation}
and define the \emph{cross-covariance operator} of $X$ and $Y$ as $\Delta : \HSpace \to \R$,
\begin{equation}
	\Delta h := \E[ X \otimes Y(h) ], \quad h \in \HSpace 
\end{equation}
We denote $\{ \lambda_j \}_{j \in \Z^+}$ as the sequence of sorted non-null distinct eigenvalues of $\Gamma$, $\lambda_1 > \lambda_2 > \ldots > 0$, and $\{ e_j \}_{j \in \Z^+}$ a sequence of orthonormal eigenvectors associated with those eigenvalues. We assume the multiplicity of each $\lambda_j$ is one. From \eqref{eq:TheModel} we have normal equation, 
\begin{equation} 
	\Delta = \Gamma \rho.
	\label{eq:MomentCond} 
\end{equation}

For the $\HSpace$-valued random element $X$, there is the well-known \emph{Karhunen-Lo\`{e}ve} expansion of $X$ and is given by, 
\begin{equation}
	X = \sum_{l=1}^\infty \sqrt{\lambda_l} \xi_l e_l, 
	\label{eq:KLExpansion} 
\end{equation}
where $\xi_l$'s are centered real random variables such that $\E[ \xi_l \xi_{l'}] = 1$ if $l = l'$ and $0$ otherwise.

\subsection{Estimation and Assumptions} 
\label{sec:Estimation}
This section will revisit some of the key definitions and setup from \cite{cardot2007clt}. Suppose we have have $n$ independent and identically distributed observations $\{(Y_i, X_i)\}_{i = 1}^n$ of \eqref{eq:TheModel}. We construct the empirical counterparts of $\Gamma$ and $\Delta$ as, 
\begin{subequations}
	\begin{align}
		\Gamma_n &:= \frac{1}{n} \sum_{i = 1}^n X_i \otimes X_i, \\ 
		\Delta_n &:= \frac{1}{n} \sum_{i = 1}^n X_i \otimes Y_i, \\ 
		U_n &:= \frac{1}{n} \sum_{i = 1}^n X_i \otimes \varepsilon_i.
	\end{align}
	\label{eq:EmpiricalOperators} 
\end{subequations}
Then from \eqref{eq:TheModel}, we get the empirical normal equation
\begin{equation} 
	\Delta_n = \Gamma_n \rho + U_n.
	\label{eq:EmpiricalMomentCond} 
\end{equation} 
We denote the $j$th empirical eigenelement of $\Gamma_n$ as $(\hat{\lambda}_j, \hat{e}_j)$.

As is well known in the FLM literature, we will need some sort of regularization method to define an ``approximate inverse'' to $\Gamma_n$. We will again follow the setup of \cite{cardot2007clt} and \cite{bosq2012linear} and define the sequence $\delta_j$'s, $j = 1, 2, \ldots$ of the smallest difference between distinct eigenvalues of $\Gamma$ as, 
\begin{subequations}
	\begin{align}
		\delta_1 &:= \lambda_1 - \lambda_2, \\ 
		\delta_j &:= \min\{ \lambda_j - \lambda_{j+1}, \lambda_{j - 1} - \lambda_j \}.
	\end{align} 
	\label{eq:Radii} 
\end{subequations}
Now take $\{ c_n \}_{n \in \N}$ a sequence of strictly positive numbers tending to zero such that $c_n < \lambda_1$ and set, 
\begin{equation}
	k_n := \sup\{ p : \lambda_p + \delta_p / 2 \ge c_n \}. 
	\label{eq:Truncation} 
\end{equation}
This $k_n$ will be our \emph{truncation parameter}; note when $n \to \infty$ we have $c_n \to 0$, which then implies $k_n \uparrow \infty$.

\begin{noobs} 
	Let's make clear on the domain and ranges of these operators: 
	\begin{align*}
		\Gamma_n &: \HSpace \to \HSpace \\ 
		\Delta_n &: \HSpace \to \R \\ 
		U_n &: \HSpace \to \R 
	\end{align*}
	and recalling $\rho \in \HSpace$, we read the moment condition \eqref{eq:EmpiricalMomentCond} as for any $h \in \HSpace$, 
	\begin{equation*}
		\Delta_n h 
		\equiv \inner{\Delta_n}{h}  
		= \inner{ \Gamma_n \rho + U_n}{h} 
		= \inner{\Gamma_n \rho}{h} + \inner{U_n}{h} 
		= \inner{\Gamma_n \rho}{h} + U_n(h) 
	\end{equation*}
	In particular, we take on the conventional notation in functional analysis where for a linear operator $L$, we can freely denote $L(h)$ or $\inner{L}{h}$ equivalently. 
\end{noobs} 

Let's gather the assumptions of our paper here. Unless noted otherwise, we will enforce these assumptions throughout the paper's results and proofs.
\begin{assumption}[Identifiability]
	\label{as:ConditionH}
	\hfill
	\begin{enumerate}[(i)]
		\item $\sum_{j = 1}^\infty \frac{\inner{\E[XY]}{e_j}^2 }{\lambda_j^2} < \infty$; and

		\item $\Ker\Gamma = \{ 0 \}$.
	\end{enumerate} 
\end{assumption}

\begin{assumption}[Tail behavior]
	\label{as:ConditionA}
	\hfill
	\begin{enumerate}[(i)]
		\item $\sum_{l = 1}^\infty |\inner{\rho}{e_l}| < \infty$;

		\item There exists some finite $M$ such that $\sup_l \E[\xi_l^6] \le M < \infty$; and

		\item There exists a convex positive function $\lambda$ such that for $j$ sufficiently large, $\lambda_j = \lambda(j)$.
	\end{enumerate}
\end{assumption}

\begin{assumption}[Approximate reciprocal]
	\label{as:ConditionF} 
	\hfill
	\begin{enumerate}[(i)]
		\item $f_n$ is decreasing on $[c_n, \lambda_1 + \delta_1]$; 

		\item $\lim_{n \to \infty} \sup_{x \ge c_n} | x f_n(x) - 1 | = 0$;

		\item $f_n'(x)$ exists for $x \in [c_n, \infty)$; and

		\item $\sup_{s \ge c_n} \abs{s f_n(s) - 1} = o\left( \frac{1}{\sqrt{n}} \right)$.
	\end{enumerate}
\end{assumption} 

\begin{assumption}[Roughening the standard deviation]
	\label{as:ConditionR}
	\hfill
	There exists a sequence of positive numbers $\{a_n\}$ such that $a_n \to 0$ and $a_n \sqrt{k_n \log k_n } \to 0$, as $n \to \infty$; and
\end{assumption}

\begin{assumption}[Empirical eigenvector approximations]
	\label{as:ConditionE}
	Assume $k_n$ is such that $\frac{1}{\lambda_{k_n} - \lambda_{k_n + 1}} = \BigOh(n^{1/2})$. 
\end{assumption}
Assumption~\ref{as:ConditionH} is a basic identifiability condition in a functional linear model and these conditions are discussed in detail in \cite{cardot1999functional} and \cite{cardot2003testing}. Assumption~\ref{as:ConditionA} corresponds to Assumption~A of \cite{cardot2003testing} which are basic conditions that ensure the statistical problem is correctly posed. For our purposes, however, we replace \cite{cardot2007clt}'s finite fourth moment assumption on the $\xi_l$'s with a stronger finite sixth moment assumption. Assumption~\ref{as:ConditionF} corresponds to Assumption~F of \cite{cardot2003testing} which effectively says the sequence of functions $\{f_n\}$ should behave like $f_n(x) \approx 1 / x$ when $n$ is sufficiently large. Assumption~\ref{as:ConditionR} is new: it says $\{ a_n \}$ is a regularization that tends to zero, and more importantly, tends to zero faster than the reciprocal of the eigenvalues tending to infinity. Assumption~\ref{as:ConditionE} will be used to ensure the empirical eigenvectors of the empirical covariance operator uniformly converges in probability to the population eigenvectors of population covariance operator. 

At this point, we will need to use the \emph{resolvent formalism} to define an object $\Gamma_n^\dagger$ which will serve as our ``approximate empirical inverse'' to $\Gamma_n$. For the purpose of exposition, we delegate the definition and details of this object to the supplementary materials. To construct $\Gamma_n^\dagger$, we will need a sequence of positive functions $\{ f_n \}_{n \in \N}$ with support on $[c_n, \infty)$ that satisfy Assumption~\ref{as:ConditionF}. Intuitively, the functions $f_n$ have the behavior of $f_n(x) \approx 1/x$ when $n$ is sufficiently large. By \emph{Riesz functional calculus}, we can define the following quantity (see supplementary materials \cite[\eqref{eq:EmpiricalProjections}]{leung2021suppsmalluniform} for details), 
\begin{equation} 
	\Gamma_n^\dagger 
	:= f_n(\Gamma_n). 
	\label{eq:EmpInverseCovarianceOperator} 
\end{equation}
In particular, $\Gamma_n^\dagger$ will serve as the approximate inverse of $\Gamma_n$. We will also let $\widehat{\Pi}_{k_n}$ denote the projection operator from $\HSpace$ onto $\mathrm{span}\{ \hat{e}_1, \ldots, \hat{e}_{k_n} \}$, which is subspace of all possible linear combinations of the first $k_n$ empirical eigenvectors (equation \eqref{eq:EmpiricalProjections} in the supplementary materials \cite{leung2021suppsmalluniform} will define $\widehat{\Pi}_{k_n}$ precisely via Riesz functional calculus). 

Finally, a natural estimator of $\rho$ from $n$ iid observations based on \eqref{eq:MomentCond} and \eqref{eq:EmpiricalMomentCond} is the \emph{functional principal components (FPCA) estimator},
\begin{equation} 
	\hat{\rho} := \Gamma_n^\dagger \Delta_n. 
	\label{eq:RhoHat} 
\end{equation} 
\cite{cardot1999functional} shows this estimator is consistent for the choice of $f_n(x) \equiv 1 / x$.

\begin{noobs}
	Let's make clear on how exactly do those ``abstract'' operator definitions $\hat{\rho}$ exactly compute out explicitly for actual numerical computation purposes. Fix any $h \in \HSpace$. Then 
	\begin{align*}
		\hat{\rho}(h)
		&= \Gamma_n^\dagger \Delta_n(h) \\ 
		&\equiv \Delta_n \circ \Gamma_n^\dagger(h) \\ 
		&= \Delta_n \circ \left(  \sum_{j = 1}^{k_n} f_n(\hat{\lambda}_j) \hat{e}_j \otimes \hat{e}_j (h) \right) \\ 
		&= \sum_{j = 1}^{k_n} f_n(\hat{\lambda}_j) \inner{\hat{e}_j}{h} \Delta_n(\hat{e}_j) \\ 
		&= \sum_{j = 1}^{k_n} f_n(\hat{\lambda}_j) \inner{\hat{e}_j}{h}  \left( \frac{1}{n} \sum_{i = 1}^n X_i \otimes Y_i (\hat{e}_j) \right) \\
		&= \frac{1}{n} \sum_{j = 1}^{k_n} \sum_{i = 1}^n f_n(\hat{\lambda}_j) \inner{\hat{e}_j}{h} \inner{X_i}{\hat{e}_j} Y_i
	\end{align*}
	This is the numerically computable form of the estimator. In particular, this explicitly shows that $\hat{\rho}$ eats an element from $\HSpace$ and returns $\R$, so meaning $\hat{\rho} \in \HSpace^*$. By Riesz representation theorem and some abuse of notations, we can thus denote $\hat{\rho}(h) = \inner{\hat{\rho}}{h}$. 
\end{noobs}

\subsection{Motivation of our paper} 
The motivation of our paper starts from two key insights from \cite{cardot2007clt}. Their first key result (also more recently \cite[Theorem 8]{crambes2013asymptotics}) is that the FPCA estimator \eqref{eq:RhoHat} cannot converge in distribution to a non-degenerate random element in the norm topology of $\HSpace$. 
\begin{thm*}[\cite{cardot2007clt}, Theorem 1]   
	It is impossible for $\hat{\rho} - \rho$ to converge in distribution to a non-degenerate random element in the norm topology of $\HSpace$. 
\end{thm*}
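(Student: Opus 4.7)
Plan: Proceed by contradiction: suppose there is a positive sequence $\{a_n\}$ such that $a_n(\hat{\rho} - \rho) \weakcvgto Z$ in the norm topology of $\HSpace$ for some non-degenerate random element $Z$. The strategy is to combine the pointwise CLT along coordinates (which pins down the only viable rate) with a trace-class / tightness obstruction in $\HSpace$ (which rules out that rate).

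For each fixed $j$, $h \mapsto \inner{h}{e_j}$ is continuous on $\HSpace$, so the continuous mapping theorem gives $a_n \inner{\hat{\rho}-\rho}{e_j} \weakcvgto \inner{Z}{e_j}$. On the other hand, the decomposition
\[
\hat{\rho}-\rho \;=\; \Gamma_n^\dagger U_n \;+\; (\Gamma_n^\dagger \Gamma_n - I)\rho,
\]
together with Assumptions~\ref{as:ConditionH}--\ref{as:ConditionF} to control the bias and the classical CLT applied to the i.i.d.\ average $\inner{U_n}{e_j} = n^{-1}\sum_i \varepsilon_i \inner{X_i}{e_j}$, yields the pointwise CLT $\sqrt{n}\inner{\hat{\rho}-\rho}{e_j} \weakcvgto N(0,\, \sigma_\varepsilon^2/\lambda_j)$. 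Matching the two weak limits forces $a_n/\sqrt{n} \to c \in [0, \infty]$; non-degeneracy of $Z$ along at least one $e_j$ then pins down $c \in (0, \infty)$, i.e., $a_n \asymp \sqrt{n}$.

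Next, a Cram\'er--Wold-type argument (applied to finite collections of the linear functionals $\inner{\cdot}{e_j}$) identifies $Z$ as Gaussian on $\HSpace$ with $\Var(\inner{Z}{e_j}) = c^2 \sigma_\varepsilon^2/\lambda_j$. But every $\HSpace$-valued Gaussian random element must have a trace-class covariance operator, i.e., $\sum_j \Var(\inner{Z}{e_j}) < \infty$. Since $\lambda_j \to 0$, the series $\sum_j 1/\lambda_j$ diverges, delivering the contradiction. As a more pedestrian alternative one may bypass Gaussianity and use Prokhorov: weak convergence in the norm topology of $\HSpace$ requires tightness, tightness in a separable Hilbert space is equivalent to uniform smallness of tail projections (the Hilbert-space analogue of Arzel\`a--Ascoli), yet a direct computation gives
\[
\E\left[\, a_n^2 \sum_{j=N+1}^{k_n} \inner{\hat{\rho}-\rho}{e_j}^2 \right] \;\asymp\; \sum_{j=N+1}^{k_n} \frac{1}{\lambda_j} \;\longrightarrow\; \infty
\]
as $n \to \infty$, for every fixed $N$.

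The hardest part is lifting divergence of expected tails to divergence in probability, so that tightness is genuinely violated. I would address this via a Paley--Zygmund second-moment argument, noting that the tail sum is asymptotically a quadratic form in approximately independent centered variables whose variance is of strictly smaller order than the square of its mean, so it concentrates near its (diverging) expectation. The cleaner route, however, is the trace-class identification: once $Z$ is shown to be Gaussian on $\HSpace$, the divergence of $\sum_j c^2 \sigma_\varepsilon^2/\lambda_j$ immediately forbids $Z$ from being an honest $\HSpace$-valued random element, so no non-degenerate limit in the norm topology can exist.
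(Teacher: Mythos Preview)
The paper under review does not contain a proof of this statement: it is quoted verbatim as \cite[Theorem~1]{cardot2007clt} and used only as motivation, with no argument supplied here. There is therefore no ``paper's own proof'' to compare against.

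That said, your proposal is sound and is in fact the argument of \cite{cardot2007clt}. Two small points of friction. First, the statement as reproduced in this paper speaks only of $\hat{\rho}-\rho$, whereas you (correctly) treat the stronger assertion that no normalization $a_n(\hat{\rho}-\rho)$ can have a non-degenerate weak limit; the original theorem in \cite{cardot2007clt} is indeed stated with an arbitrary normalizing sequence, so your reading is the right one. Second, the step ``matching the two weak limits forces $a_n/\sqrt{n}\to c$'' deserves an explicit appeal to the convergence-of-types theorem rather than being asserted; once that is in place, your trace-class route is clean and is preferable to the Paley--Zygmund alternative you sketch, since it avoids any second-moment computation on the tail sum. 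The pointwise CLT you invoke does require that the coordinate-wise bias $\sqrt{n}\,\langle(\Gamma_n^\dagger\Gamma_n - I)\rho, e_j\rangle$ vanish for each fixed $j$, which is exactly what Assumption~\ref{as:ConditionF}(iv) is designed to deliver; you should cite it rather than the whole block \ref{as:ConditionH}--\ref{as:ConditionF}.
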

This impossibility result suggests that we may not directly use the FPCA estimator for the purpose of inference in the norm topology of $\HSpace$. In contrast, uniform prediction intervals can still be constructed (see concluding remarks of \cite{cardot2007clt} and \cite[Corollaries 10 and 11]{crambes2013asymptotics}). 

Their second result (see also more recently \cite[Theorem 9]{crambes2013asymptotics}) shows the following pointwise weak convergence result. 
\begin{thm*}[\cite{cardot2007clt}, Theorem 3] 
	Fix any $x \in \HSpace$. Then under the same Assumptions~\ref{as:ConditionA} to \ref{as:ConditionF} of our paper, and under additional regularity conditions (see their paper for details), 
	\begin{equation*} 
		\frac{\sqrt{n}}{\norm{\Gamma^{1/2}\Gamma^\dagger x} \sigma_\varepsilon} ( \innersmall{\hat{\rho}}{x} - \innersmall{\widehat{\Pi}_{k_n} \rho}{x} ) \weakcvgto \mathcal{N}(0,1). 
	\end{equation*} 
	\label{thm:Cardot2007Thm3} 
\end{thm*}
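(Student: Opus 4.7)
The natural starting point is to substitute the empirical normal equation \eqref{eq:EmpiricalMomentCond} into the definition of $\hat{\rho}$ in \eqref{eq:RhoHat}, obtaining
\[
\hat{\rho} - \widehat{\Pi}_{k_n}\rho \;=\; \bigl(\Gamma_n^\dagger \Gamma_n - \widehat{\Pi}_{k_n}\bigr)\rho \;+\; \Gamma_n^\dagger U_n.
\]
Pairing with $x$ yields $\innersmall{\hat{\rho}-\widehat{\Pi}_{k_n}\rho}{x} = B_n(x) + S_n(x)$, where $B_n(x):=\innersmall{(\Gamma_n^\dagger\Gamma_n - \widehat{\Pi}_{k_n})\rho}{x}$ is a bias term and $S_n(x) := \frac{1}{n}\sum_{i=1}^n \innersmall{X_i}{\Gamma_n^\dagger x}\varepsilon_i$ is a stochastic term. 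The plan is to show that after rescaling by $\sqrt{n}/(\sigma_\varepsilon\norm{\Gamma^{1/2}\Gamma^\dagger x})$, the bias vanishes in probability while the stochastic term converges weakly to $\mathcal{N}(0,1)$; Slutsky's theorem then delivers the claim.

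For the bias, the joint spectral representation gives $\Gamma_n^\dagger\Gamma_n - \widehat{\Pi}_{k_n} = \sum_{j=1}^{k_n}(\hat{\lambda}_j f_n(\hat{\lambda}_j)-1)\hat{e}_j\otimes\hat{e}_j$ (modulo alignment of $k_n$ with the empirical eigenvalues above $c_n$), so Cauchy-Schwarz and Assumption~\ref{as:ConditionF}(iv) yield $|B_n(x)| \le \sup_{s\ge c_n}|sf_n(s)-1|\cdot\norm{\rho}\cdot\norm{x} = o(n^{-1/2})$. Since the normalizing factor $\sigma_\varepsilon\norm{\Gamma^{1/2}\Gamma^\dagger x}$ stays bounded below for any fixed $x$ with nontrivial spectral content, this gives $\sqrt{n}\,B_n(x)/(\sigma_\varepsilon\norm{\Gamma^{1/2}\Gamma^\dagger x}) = o(1)$.

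For the stochastic term, since $\Gamma_n^\dagger$ is data-dependent, I would further split $S_n(x) = S_n^\star(x) + R_n(x)$, where $S_n^\star(x):=\frac{1}{n}\sum_i\innersmall{X_i}{\Gamma^\dagger x}\varepsilon_i$ uses the population object $\Gamma^\dagger := f_n(\Gamma)$ and $R_n(x):=\frac{1}{n}\sum_i\innersmall{X_i}{(\Gamma_n^\dagger-\Gamma^\dagger)x}\varepsilon_i$ is the replacement error. The first piece is a genuine i.i.d.\ sum of centered scalars; the Karhunen-Lo\`eve expansion \eqref{eq:KLExpansion} gives $\innersmall{X}{\Gamma^\dagger x} = \sum_l\sqrt{\lambda_l}f_n(\lambda_l)\innersmall{x}{e_l}\xi_l$, whence $\Var(S_n^\star(x)) = \sigma_\varepsilon^2\norm{\Gamma^{1/2}\Gamma^\dagger x}^2/n$, exactly matching the normalization. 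A Lyapunov CLT with exponent $3$ then yields $\sqrt{n}\,S_n^\star(x)/(\sigma_\varepsilon\norm{\Gamma^{1/2}\Gamma^\dagger x})\weakcvgto\mathcal{N}(0,1)$ once one verifies $\E[|\innersmall{X}{\Gamma^\dagger x}\varepsilon|^3]/(\norm{\Gamma^{1/2}\Gamma^\dagger x}^3\sqrt{n})\to 0$; this follows from Assumption~\ref{as:ConditionA}(ii) via a Rosenthal-type bound applied to the weighted sum of $\xi_l$'s.

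The main obstacle is controlling $R_n(x)$, since naive operator-norm bounds lose the required $\sqrt{n}$-rate as $c_n$ and $\delta_{k_n}$ degenerate. I would analyze $f_n(\Gamma_n) - f_n(\Gamma)$ via Riesz functional calculus, writing $f_n(\Gamma_n)-f_n(\Gamma) = \frac{1}{2\pi i}\oint_C f_n(z)\bigl[(zI-\Gamma_n)^{-1}-(zI-\Gamma)^{-1}\bigr]dz$ over a contour $C$ enclosing the top $k_n$ population eigenvalues at distance of order $\delta_{k_n}/2$, and exploiting the resolvent identity $(zI-\Gamma_n)^{-1}-(zI-\Gamma)^{-1}=(zI-\Gamma_n)^{-1}(\Gamma_n-\Gamma)(zI-\Gamma)^{-1}$. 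Combining $\norm{\Gamma_n-\Gamma}_{\mathrm{op}}=\BigOhPee(n^{-1/2})$ with the spacing bound $1/\delta_{k_n}=\BigOh(n^{1/2})$ from Assumption~\ref{as:ConditionE}, then conditioning on the $X_i$'s and using $\E[\varepsilon_i|X_i]=0$ together with Chebyshev, one concludes $\sqrt{n}\,R_n(x)/(\sigma_\varepsilon\norm{\Gamma^{1/2}\Gamma^\dagger x}) = \SmallOhPee(1)$, completing the proof.
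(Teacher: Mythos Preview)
The paper does not supply its own proof of this statement --- it is quoted as Theorem~3 of \cite{cardot2007clt} and used as motivation.  That said, the paper's decomposition \eqref{eq:RhoHatDecomposition}--\eqref{eq:RhoHatDecompositionOperators} in the appendix (adapted from \cite{cardot2007clt}) is precisely the machinery one uses to prove it, so the comparison is with that.

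Your proposal is correct and follows the same architecture: write $\hat{\rho}-\widehat{\Pi}_{k_n}\rho$ as bias plus stochastic, then split the stochastic part into a clean i.i.d.\ term $\innersmall{\Gamma^\dagger U_n}{x}$ (the paper's $\mathcal{R}_n$) plus a replacement error $\innersmall{(\Gamma_n^\dagger-\Gamma^\dagger)U_n}{x}$ (the paper's $\mathcal{S}_n$), and handle the latter via the resolvent identity over the contour $\mathcal{C}_n$.  Your Lyapunov CLT for the main term and the contour-integral treatment of $R_n(x)$ are exactly what \cite{cardot2007clt} and the present paper do.

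The one genuine difference is your bias term.  You keep $B_n(x)=\innersmall{(\Gamma_n^\dagger\Gamma_n-\widehat{\Pi}_{k_n})\rho}{x}$ intact and diagonalize it directly in the \emph{empirical} eigenbasis, giving the clean formula $\sum_{j\le k_n}(\hat{\lambda}_jf_n(\hat{\lambda}_j)-1)\hat{e}_j\otimes\hat{e}_j$ and an immediate $o(n^{-1/2})$ bound from Assumption~\ref{as:ConditionF}(iv).  The paper instead splits this into $\mathcal{T}_n=(\Gamma_n^\dagger\Gamma_n-\Pi_{k_n})\rho$ and $\mathcal{Y}_n=(\Pi_{k_n}-\widehat{\Pi}_{k_n})\rho$, where $\Pi_{k_n}$ is the \emph{population} projection; this forces a separate and rather involved resolvent-perturbation analysis of $\mathcal{Y}_n$ (Proposition~\ref{prop:TermYnGoesToZero}).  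For the pointwise result at a fixed $x$ your shortcut is both simpler and sufficient; the paper's finer splitting only pays off because its main theorem needs explicit rates uniform over the growing domain $\OptDomain$, where tracking $\mathcal{Y}_n$ separately yields the $k_n$-dependent bounds that feed into \eqref{eq:ProofStepI}.
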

For the sake of exposition, we will defer the precise definition of $\Gamma^\dagger$ to the supplementary materials (see \cite[\eqref{eq:GammaDagger}]{leung2021suppsmalluniform}), but we can intuitively think of this quantity as an ``approximate inverse'' of the population covariance operator $\Gamma$. This result is extremely useful for constructing \emph{prediction intervals} when we evaluate at $x = X_{n+1}$. However, the rather arbitrary choice of $x \in \HSpace$ renders this result impractical when the researcher is concerned with the statistical inference. 

The main contribution of this paper can be thought of as ``something in between'' Theorem 1 and Theorem 3 of \cite{cardot2007clt}. This paper focuses on the study on a scalar ``partial'' supremum statistic $W_n$ to be defined in \eqref{eq:LeungTamStatistic}. For the sake of heuristics in this section, we will slightly blur the distinction between the empirical eigenelements and the population eigenelements (see Remark~\ref{rem:EmpiricalFeasibility} for the validity of this justification). Let's make three observations. 

The first observation is that there is no need to consider points $x$ in all of $\HSpace$ in \cite[Theorem 3]{cardot2007clt}. Provided $x \neq 0$, we can multiply and divide by $\frac{\epsilon}{\norm{x}}$, where $\epsilon \in (0,1]$, and so we can rewrite as, 
\begin{align}
	\frac{\sqrt{n}}{\sigma_\varepsilon} \frac{\innersmall{\hat{\rho} - \widehat{\Pi}_{k_n}\rho}{x} }{ \norm{ \Gamma^{1/2}\Gamma^\dagger x} }  
	= \frac{\sqrt{n}}{\sigma_\varepsilon} \frac{\innersmall{\hat{\rho} - \widehat{\Pi}_{k_n}\rho}{ \frac{\epsilon x}{\norm{x}}} }{ \normBIG{ \Gamma^{1/2}\Gamma^\dagger \frac{\epsilon x}{\norm{x}} } } 
	\label{eq:PreLeungTamStatistic} 
\end{align}
Of course, $\norm{ \frac{\epsilon x}{\norm{x}} } = \epsilon \in (0,1]$. So rather than considering all points in $\HSpace$, we can immediately confine to those points in $\ballH := \{ h \in \HSpace : \norm{h} \le 1\}$. 

Secondly, we can say a lot more about \cite[Theorem 3]{cardot2007clt} by restricting $\ballH$ further. The main idea is to consider not all points in $\ballH$, but consider a ``small but growing'' linear subspace of it. In the numerator of \eqref{eq:PreLeungTamStatistic}, since $\hat{\rho} - \widehat{\Pi}_{k_n}\rho \in \mathrm{span}\{ \hat{e}_1, \ldots, \hat{e}_{k_n} \}$, by the idempotent property of the projection operator, it follows for any $x \in \ballH$ (or in $\HSpace$) we have $\innersmall{ \hat{\rho} - \hat{\Pi}_{k_n}\rho }{x} = \innersmall{ \widehat{\Pi}_{k_n}(\hat{\rho} - \widehat{\Pi}_{k_n} \rho) }{x} = \innersmall{ \hat{\rho} - \widehat{\Pi}_{k_n}\rho }{ \widehat{\Pi}_{k_n} x}$. Thus only points in $\mathrm{span}\{ \hat{e}_1, \ldots, \hat{e}_{k_n} \} \approx \mathrm{span}\{ e_1, \ldots, e_{k_n} \}$ determine the numerator of \eqref{eq:PreLeungTamStatistic}. Next let's consider the denominator of \eqref{eq:PreLeungTamStatistic}. By the spectral decompositions of $\Gamma^{1/2}$ and $\Gamma^\dagger$,  
\begin{equation*}
	\Gamma^{1/2} \Gamma^\dagger 
	= \sum_{j = 1}^\infty \sum_{l = 1}^{k_n} \sqrt{\lambda_j} f_n(\lambda_l) P_j P_l  
	= \sum_{j = 1}^{k_n} \sqrt{\lambda_j} f_n(\lambda_j) P_j 
\end{equation*}
where $P_j$ is the projection of $\HSpace$ onto the $j$th eigenspace $\Ker (\Gamma - \lambda_j)$. More explicitly, since these orthogonal projections partition $\HSpace$, we can write any $x \in \ballH$ as $x = \sum_{j = 1}^\infty P_j x$. And since $\Ker(\Gamma - \lambda_j) \perp \Ker(\Gamma - \lambda_{j'})$ for $j \neq j'$, this implies, 
\begin{equation*}
	\Gamma^{1/2} \Gamma^\dagger x 
	= \sum_{j = 1}^{k_n} \sqrt{\lambda_j} f_n(\lambda_j) \sum_{l = 1}^\infty P_j P_l x  
	= \sum_{j = 1}^{k_n} \sqrt{\lambda_j} f_n(\lambda_j) P_j x 
\end{equation*}
In other words, picking any $x \in \ballH$ with $x = \sum_{j = 1}^\infty P_j x$ versus picking $h \in \ballH$ with $h = \sum_{j = 1}^{k_n} P_j h$ results in the same value, $\Gamma^{1/2} \Gamma^\dagger x = \Gamma^{1/2} \Gamma^\dagger h$. And since $\HSpace$ (and hence $\ballH$) is assumed to be separable, we can simply assume that such $h$ takes the form $h = \sum_{j = 1}^{k_n} b_j e_j$ with $\sum_{j = 1}^{k_n} b_j^2 \le 1$. In all, we argue it suffices to evaluate $\norm{\Gamma^{1/2} \Gamma^\dagger \cdot}$ on the finite-dimensional domain $\ballH \cap \mathrm{span}\left\{ e_1, \ldots, e_{k_n} \right\}$ instead of on the infinite-dimensional domain $\ballH$. 

Thirdly, instead of considering $\sigma_\varepsilon \norm{\Gamma^{1/2} \Gamma^\dagger h}$ as the asymptotic standard deviation of $\innersmall{\hat{\rho} - \widehat{\Pi}_{k_n}\rho}{h}$, let's use a slightly roughened version and define 
\begin{equation}
	t_n(h) 
	:= \norm{ \Gamma^{1/2} \Gamma^\dagger h } + a_n  
	= \sqrt{ \sum_{j = 1}^{k_n} \lambda_j [f_n(\lambda_j)]^2 \inner{h}{e_j}^2 } + a_n 
	\label{eq:StdDev} 
\end{equation}
where we let $\{ a_n \}$ be a sequence of nonnegative numbers tending to zero. Note and recall that $t_n$ depends on $n$ not just through $a_n$ but also through $\Gamma^\dagger$ which depends on $k_n$. Assumption~\ref{as:ConditionR} ensures this roughening sequence tends to zero at a rate slower than the rate for which the sequence of eigenvalues tend to zero.

\section{A small-uniform statistic} 
\label{sec:SmallUniformStatistic}
Finally, let's put our above observations together. In search for a single scalar statistic, it seems reasonable to look for the largest value of \eqref{eq:PreLeungTamStatistic} over the finite-dimensional domain $\ballH \cap \mathrm{span}\{ e_1, \ldots, e_{k_n} \}$. We thus have the following definition.
\begin{defn}[Small-uniform statistic]
	Let $\hat{\rho}$ be the FPCA estimator \eqref{eq:RhoHat} of the functional linear model \eqref{eq:TheModel} and let $\{ \beta_n \}$ be a sequence of positive numbers with $\beta_n \to \infty$ as $n \to \infty$. Define 
	\begin{equation} 
		\begin{aligned} 
			W_n &:=  \frac{\sqrt{n}}{\sigma_\varepsilon \beta_n} \sup_{h \in \OptDomain} \frac{\innersmall{\hat{\rho} - \widehat{\Pi}_{k_n}\rho}{h}}{t_n(h) } \\ 
			\OptDomain &:= \ballH \cap \mathrm{span}\{ e_1, \ldots, e_{k_n} \}
		\end{aligned} 
		\label{eq:LeungTamStatistic} 
	\end{equation} 
	where $t_n$ is defined in \eqref{eq:StdDev}. We call $W_n$ the \emph{small-uniform} statistic of the functional linear model. 
\end{defn} 
The real-valued scalar statistic $W_n$ is ``small'' because we only consider a low and finite-dimensional linear subspace $\OptDomain$ of $\HSpace$, even though as $n$ becomes large this subspace approaches $\ballH$. It is ``uniform'' because we look for the largest value over this linear subspace $\OptDomain$.

Recall again \cite[Theorem 3]{cardot2007clt} already shows the pointwise asymptotic normality result of the FPCA estimator. Thus under some regularity conditions and a proper rate normalization, one can expect $W_n$ to distribute like the supremum of a Gaussian process indexed by $\OptDomain$. Indeed our main result Theorem~\ref{thm:NormalizedLeungTamStatisticConvergence} shows precisely the rate of convergence under which $W_n$ and a certain Gaussian process converge to each other in probability, and hence also in distribution. Note by linearity in $h$ in the numerator of \eqref{eq:LeungTamStatistic} and as the denominator $t_n$ is strictly positive, the statistic $W_n$ is almost surely nonnegative valued. 

\begin{rem}[Rate normalization] 
	\label{rem:LTStatNormalization} 
	The normalization $1 / \beta_n$ in \eqref{eq:LeungTamStatistic} might seem curious. The normalization by $\sqrt{n}$ is standard, and is well expected by the pointwise asymptotic normality result of \cite[Theorem 3]{cardot2007clt}. The normalization by $1 / \beta_n$ is necessary because we need this rate to ensure some ``nuisance terms'' in $W_n$ converge fast enough to zero. See the proof outline of our main result Theorem~\ref{thm:NormalizedLeungTamStatisticConvergence} for further explanations. 

	In addition, our statistic is a fractional programming problem (see \cite{stancu2012fractional} for a survey). So while $\sigma_\varepsilon \norm{\Gamma^{1/2} \Gamma^\dagger h}$ is indeed the pointwise asymptotic standard deviation for $\sqrt{n} \innersmall{ \hat{\rho} - \widehat{\Pi}_{k_n}\rho }{h}$, we clearly see this standard deviation evaluates to zero at $h = 0$. Using a roughened version $t_n(h)$ of the standard deviation ensures the denominator of our statistic is strictly positive. 
\end{rem} 

\begin{rem}[Existence] 
	\label{rem:Existence} 
	The optimization problem in $W_n$ is well-defined. The objective function is clearly continuous in $\HSpace$, especially since by construction $t_n > 0$. Moreover we're optimizing over $\OptDomain$, which is a compact set
	\footnote{
		Clearly $\ballH$ is bounded, and a finite-dimensional subspace in an infinite-dimensional Hilbert space is closed (in the relative topology). Thus the Heine-Borel theorem applies and so $\OptDomain$ is compact.
	}
	, and so the extreme value theorem applies.
\end{rem} 

\begin{rem}[Empirically feasible form of $W_n$] 
	\label{rem:EmpiricalFeasibility}
	As \eqref{eq:LeungTamStatistic} is written, it is an empirically infeasible quantity for several reasons. Let's argue why putting in empirically feasible plug-in estimates will asymptotically do no harm to our results.
	\begin{enumerate}[(a)]
		\item (\emph{Replacing the truncation parameter}) The truncation parameter $k_n$ as defined in \eqref{eq:Truncation} depends on the unobservable population eigenvalues $\lambda_j$'s. The natural substitute is the empirical truncation 
			\begin{equation}
				\hat{k}_n := \max \{ p = 1, \ldots, n : \hat{\lambda}_p + \hat{\delta}_p / 2 \ge c_n \},
				\label{eq:EmpTruncation}
			\end{equation}
			where $\hat{\delta}_j$ is as analogously defined to its population counterpart in \eqref{eq:Radii} but with the empirical eigenvalues. Thanks to Assumption~\ref{as:ConditionA}(ii) and \cite[\S 4.2, Theorem 4.4]{bosq2012linear}, we have $\sup_{j \ge 1} \abs{\hat{\lambda}_j - \lambda_j} \to 0$ almost surely. Hence for sufficiently large sample sizes, using the empirical truncation $\hat{k}_n$ or population truncation $k_n$ are equivalent in probability. 

		\item (\emph{Replacing the optimization domain}) The optimization domain as defined in \eqref{eq:LeungTamStatistic} is over the unobservable population eigenvectors $e_j$'s. The natural empirically feasible approach is to optimize instead over the empirical eigenvectors $\hat{e}_j$'s. By Assumption~\ref{as:ConditionE}, \cite[\S 4.2, Corollary 4.3]{bosq2012linear} ensures $\E\left[ \sup_{1 \le j \le k_n} \norm{\hat{e}_j - e_j' }^2 \right] \to 0$ as $n \to \infty$, and where we have denoted $e_j' := \mathrm{sign}(\inner{\hat{e}_j}{e_j}) e_j$ and where $\mathrm{sign}(t) = 1$ if $t > 0$, $= 0$ if $t = 0$, and $= -1$ if $t < 0$. This implies optimizing over $\EmpOptDomain := \ballH \cap \mathrm{span}\{ \hat{e}_1, \ldots, \hat{e}_{k_n} \}$ and $\ballH \cap \mathrm{span}\{ e_1', \ldots, e_{k_n}' \}$ are asymptotically equivalent in probability. By fixing the ``orientations'' $\mathrm{sign}(\inner{\hat{e}_j}{e_j})$'s, we can identify optimizing $\ballH \cap \mathrm{span}\{ e_1', \ldots, e_{k_n}' \}$  with optimizing over $\OptDomain$.

		\item (\emph{Replacing the asymptotic standard deviation}) The asymptotic standard deviation $t_n$ as defined in \eqref{eq:StdDev} depends on the unobservable population eigenvalues $\lambda_j$'s and eigenvectors $e_j$'s. An empirically feasible version of $t_n$ is its natural plug-in estimator, 
			\begin{equation}
				\hat{t}_n(h) = 
				\sqrt{ \sum_{j = 1}^{\hat{k}_n} \hat{\lambda}_j [f_n(\hat{\lambda}_j)]^2 \inner{h}{\hat{e}_j}^2 } + a_n,
				\quad h \in \EmpOptDomain
				\label{eq:StdDevEstimator} 
			\end{equation}
			By using the arguments in (a) and (b) above, it is not difficult to see that $t_n$ and $\hat{t}_n$ are asymptotically the same in probability. See also \cite[Corollary 2]{cardot2007clt}.

		\item (\emph{Consistent estimate of noise error}) It is clear the standard deviation of the error term $\sigma_\varepsilon$ can be replaced by any consistent estimator $\hat{\sigma}_\varepsilon \overset{\Prob}{\to} \sigma_\varepsilon$. 
	\end{enumerate}

\end{rem}

Except for Sections~\ref{sec:HypothesisTesting} and \ref{sec:NumericalSimulations} where we discuss numerical simulations, the rest of this section and the proofs will use $W_n$ as defined by \eqref{eq:LeungTamStatistic}.

\subsection{Main result} 

This is the paper's main result. The proof outline sketches the two key steps to proving our result. We delegate all the proof details to the supplementary materials \cite{leung2021suppsmalluniform}. For an arbitrary set $T$, we denote $\ell^\infty(T)$ as the space of all bounded functions from $T$ to $\R$ with the uniform norm $\norm{f}_T := \sup_{t \in T} \abs{f(t)}$.
\begin{thm}[Gaussian suprema approximation of the small-uniform statistic] 
	\label{thm:NormalizedLeungTamStatisticConvergence} 
	Assume Assumptions~\ref{as:ConditionH} to \ref{as:ConditionR} hold and assume $k_n / n \to 0$ as $n \to \infty$. Then for sufficiently large $n$, there exists a mean-zero Gaussian process $\{ G_{P,n}(h) \}_{h \in \OptDomain}$ in $\ell^\infty(\mathcal{J}_n)$ with covariance function, 
	\begin{equation}
		\E[G_{P,n}(h_1) G_{P,n}(h_2) ] 
		= \frac{ \inner{ \Gamma^{1/2} \Gamma^\dagger h_1}{ \Gamma^{1/2} \Gamma^\dagger h_2 }}{ (\norm{\Gamma^{1/2}\Gamma^\dagger h_1} + a_n) \, (\norm{\Gamma^{1/2}\Gamma^\dagger h_2} + a_n)   },
		\label{eq:SupZnGaussianApproximationCovFun} 
	\end{equation}
	for all $h_1, h_2 \in \OptDomain$.

	Moreover, if we define the random variables 
	\begin{equation*}
		\widetilde{Z}_n := \sup_{h \in \OptDomain} G_{P,n} h \\
		\quad\text{and}\quad
		\widetilde{W}_n := \frac{\widetilde{Z}_n}{\beta_n},
	\end{equation*}
	then the small-uniform statistic $W_n$ of \eqref{eq:LeungTamStatistic} and the random variable $\widetilde{W}_n$ are close together in probability at the rate 
	\begin{equation}
		\abs{ W_n - \widetilde{W}_n } 
		= \BigOhPee\left( \KeyCvgRateLTStatistic \right). 
		\label{eq:NormalizedLTStatisticCvgRate} 
	\end{equation}
	In particular if $\SuffCondCvgRateLTStatistic \to 0$, then 
	\begin{equation*}
		\abs{W_n - \widetilde{W}_n} \overset{\Prob}{\to} 0.
	\end{equation*}
\end{thm}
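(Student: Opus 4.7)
The plan is to decompose $W_n$ into a linearized ``principal empirical process'' plus negligible remainders, and then invoke a Chernozhukov-Chetverikov-Kato-style Gaussian coupling \citep{chernozhukov2014gaussian} for the principal process. The two-step structure matches the two summands in the rate $\KeyCvgRateStepI/\beta_n + \KeyCvgRateStepIIsimplified$: Step~I produces the first, Step~II the second.

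\textbf{Step I (linearization).} Using the empirical normal equation \eqref{eq:EmpiricalMomentCond}, I would write
\[
\hat{\rho} - \widehat{\Pi}_{k_n}\rho
= \Gamma_n^\dagger U_n + \bigl(\Gamma_n^\dagger \Gamma_n - \widehat{\Pi}_{k_n}\bigr)\rho
\]
and then replace the empirical approximate inverse $\Gamma_n^\dagger$ by its population analogue $\Gamma^\dagger$ inside the stochastic summand. Assumption~\ref{as:ConditionF}(iv) pushes the deterministic ``bias'' $(\Gamma_n^\dagger \Gamma_n - \widehat{\Pi}_{k_n})\rho$ to $o(1/\sqrt{n})$ pointwise; uniformizing over the $k_n$-dimensional set $\OptDomain$ and scaling by $\sqrt{n}/(\sigma_\varepsilon t_n(h))$ then controls its contribution at the $\KeyCvgRateStepI$ level. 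The empirical-to-population swap in $\Gamma_n^\dagger U_n$ is handled through Riesz functional calculus (a contour integral representation of $f_n(\Gamma_n)-f_n(\Gamma)$), the eigenelement perturbation bounds of \cite[Ch.~4]{bosq2012linear}, and Assumption~\ref{as:ConditionE}; the accumulated $1/\delta_j$ factors combined with a $\sqrt{\log k_n}$ entropy penalty from a maximal inequality over $\OptDomain$ compound into the stated $\KeyCvgRateStepI$ bound (which is then divided by $\beta_n$).

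\textbf{Step II (Gaussian coupling).} After Step~I, the principal process to approximate is
\[
V_n(h)
:= \frac{\sqrt{n}}{\sigma_\varepsilon}\frac{\innersmall{\Gamma^\dagger U_n}{h}}{t_n(h)}
= \frac{1}{\sqrt{n}\sigma_\varepsilon\, t_n(h)} \sum_{i=1}^n \varepsilon_i \innersmall{X_i}{\Gamma^\dagger h},
\]
a sum-of-iid empirical process indexed by the compact set $\OptDomain$. Parameterizing $h = \sum_{j=1}^{k_n} b_j e_j$ with $\|b\|_2\le 1$ identifies the index set with the Euclidean unit ball in $\R^{k_n}$, whose $\epsilon$-covering entropy is $\BigOh(k_n \log(1/\epsilon))$. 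The moment/envelope conditions required by \cite{chernozhukov2014gaussian} follow from Assumption~\ref{as:ConditionA}(ii) (the uniform sixth-moment bound on the $\xi_l$'s) together with the roughening $t_n(h)\ge a_n>0$, which keeps the denominator bounded away from zero. The CCK coupling then produces, on an enriched probability space, a mean-zero Gaussian process $G_{P,n}$ with the covariance \eqref{eq:SupZnGaussianApproximationCovFun} such that $|\sup_h V_n(h) - \sup_h G_{P,n}(h)| = \BigOhPee(\KeyCvgRateStepII)$, dominated by the simplified ceiling $\KeyCvgRateStepIIsimplified$; the three summands in $\KeyCvgRateStepII$ track the three terms in the CCK Gaussian approximation inequality.

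\textbf{Combining and the main obstacle.} Assembling the two bounds via the triangle inequality and dividing by $\beta_n$ yields \eqref{eq:NormalizedLTStatisticCvgRate}, and the in-probability conclusion is immediate from $\beta_n\to\infty$ and the hypothesis $\SuffCondCvgRateLTStatistic\to 0$. The hardest part is Step~I: obtaining a uniform-in-$h\in\OptDomain$ bound on the operator-valued discrepancy $\Gamma_n^\dagger-\Gamma^\dagger$ that is only polynomial in $k_n$ rather than exponential. This is where Assumption~\ref{as:ConditionE}'s explicit spectral-gap control $1/(\lambda_{k_n}-\lambda_{k_n+1})=\BigOh(n^{1/2})$ does the heavy lifting, together with a maximal inequality over the $k_n$-dimensional ball that absorbs only a $\sqrt{\log k_n}$ entropy cost instead of a power of $k_n$; balancing these perturbation factors against the rate $1/\sqrt{n}$ in $\|\Gamma_n-\Gamma\|$ is what determines the exact polynomial exponents appearing in $\KeyCvgRateStepI$.
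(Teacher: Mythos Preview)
Your two-step architecture---linearize $\hat{\rho}-\widehat{\Pi}_{k_n}\rho$ into $\Gamma^\dagger U_n$ plus remainders, then apply the \cite{chernozhukov2014gaussian} coupling to the principal process---is exactly the paper's strategy, and your decomposition is algebraically equivalent to the paper's four-term split $\mathcal{T}_n+\mathcal{S}_n+\mathcal{Y}_n+\mathcal{R}_n$ once one writes $(\Gamma_n^\dagger\Gamma_n-\widehat{\Pi}_{k_n})\rho=\mathcal{T}_n+\mathcal{Y}_n$ and $\Gamma_n^\dagger U_n=\mathcal{S}_n+\mathcal{R}_n$. Step~II is essentially on target.

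There are, however, two substantive misattributions in your Step~I that would lead you astray if you tried to fill in the details. First, Assumption~\ref{as:ConditionE} is \emph{not} a hypothesis of the theorem (only Assumptions~\ref{as:ConditionH}--\ref{as:ConditionR} are), and the paper never invokes it in the proof; it is used only in Remark~\ref{rem:EmpiricalFeasibility} to justify replacing population eigenvectors by empirical ones in practice. The spectral-gap control you credit to Assumption~\ref{as:ConditionE} instead comes from Assumption~\ref{as:ConditionA}(iii) (the convexity of $j\mapsto\lambda_j$, which yields $\lambda_j\lesssim 1/(j\log j)$ and $\delta_j\gtrsim 1/(j\log j)$) combined with a direct second-moment bound on $\|(z-\Gamma)^{-1}(\Gamma_n-\Gamma)\|$ for $z\in\mathcal{B}_j$ via the Hilbert--Schmidt norm and the KL expansion. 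Second, the uniformization over $\OptDomain$ is \emph{not} done by an entropy/maximal inequality: the paper simply applies Cauchy--Schwarz, $|\langle A,h/t_n(h)\rangle|\le\|A\|\cdot\|h/t_n(h)\|$, and then uses the elementary lower bound $t_n(h)\ge f_n(\lambda_1)\lambda_{k_n}^{1/2}\|h\|+a_n$ to get $\sup_{h\in\OptDomain}\|h/t_n(h)\|\lesssim\sqrt{k_n\log k_n}$. So the $\sqrt{\log k_n}$ you see is an eigenvalue-decay artifact, not an entropy cost. Relatedly, your claim that $(\Gamma_n^\dagger\Gamma_n-\widehat{\Pi}_{k_n})\rho$ is a ``deterministic bias'' handled by Assumption~\ref{as:ConditionF}(iv) alone is too quick: only the $\mathcal{T}_n$ piece is $o_\Prob(1/\sqrt{n})$ that way; the $\mathcal{Y}_n=(\Pi_{k_n}-\widehat{\Pi}_{k_n})\rho$ piece is genuinely random and requires the full resolvent-identity perturbation expansion to reach the $k_n^{9/2}(\log k_n)^{3/2}/\sqrt{n}$ rate. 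Finally, in Step~II the crucial denominator bound is not $t_n(h)\ge a_n$ (which would give rates blowing up like $1/a_n$) but again the $\lambda_{k_n}^{1/2}$ lower bound, which is what forces $b_n\lesssim k_n^{9/2}(\log k_n)^{9/2}$ in the CCK theorem.
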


\begin{proof}[Proof outline] 
	For each $h \in \OptDomain$ we have the important decomposition, 
	\begin{align}
		\frac{\sqrt{n}}{\sigma_\varepsilon t_n(h)}\innersmall{ \hat{\rho} - \widehat{\Pi}_{k_n}\rho}{h} = \frac{\sqrt{n}}{\sigma_\varepsilon t_n(h)} \innersmall{\mathcal{T}_n + \mathcal{S}_n + \mathcal{Y}_n + \mathcal{R}_n}{h}.   
		\label{eq:RhoHatDecomposition} 
	\end{align}
	where 
	\begin{subequations}
		\begin{align}
			\mathcal{T}_n &:= (\Gamma_n^\dagger \Gamma_n - \Pi_{k_n} ) \rho, \\ 
			\mathcal{S}_n &:= (\Gamma_n^\dagger - \Gamma^\dagger) U_n, \\ 
			\mathcal{Y}_n &:= (\Pi_{k_n} - \widehat{\Pi}_{k_n})\rho, \\
			\mathcal{R}_n &:= \Gamma^\dagger U_n. 
		\end{align}
		\label{eq:RhoHatDecompositionOperators} 
	\end{subequations}
	For the sake of exposition, we defer the precise functional calculus definitions of the bounded operators $\Gamma_n^\dagger, \Gamma_n, \Gamma^\dagger$ and $\Pi_{k_n}$ to the supplementary materials. 

	Then by triangle inequality, we have
	\begin{align*}
		&\absBIG{ \sup_{h \in \OptDomain} \frac{\sqrt{n}}{\sigma_\varepsilon t_n(h)}\innersmall{ \hat{\rho} - \widehat{\Pi}_{k_n}\rho}{h}  - \widetilde{Z}_n } \\
		&\le \sup_{h \in \OptDomain} \absBIG{ \frac{\sqrt{n}}{\sigma_\varepsilon t_n(h)} \innersmall{\mathcal{T}_n + \mathcal{S}_n + \mathcal{Y}_n}{h}  } 
		+ \absBIG{ \sup_{h \in \OptDomain} \frac{\sqrt{n}}{\sigma_\varepsilon t_n(h)} \inner{\mathcal{R}_n }{h} - \widetilde{Z}_n }. 
	\end{align*}

	The two major steps in the proof are showing the following results for sufficiently large $n$: 
	\begin{enumerate}[\itshape Step I:] 
		\item Asymptotic bias terms
			\begin{equation}
				\sup_{h \in \OptDomain} \absBIG{ \frac{\sqrt{n}}{\sigma_\varepsilon t_n(h)} \inner{\mathcal{T}_n + \mathcal{S}_n + \mathcal{Y}_n}{h}  }	
				= \BigOhPee\left( \KeyCvgRateStepI \right). 
				\label{eq:ProofStepI} 
				\tag{I} 
			\end{equation}
			
		\item Asymptotic distribution term
			\begin{equation}
				\absBIG{ \frac{\sqrt{n}}{\sigma_\varepsilon t_n(h)}\sup_{h \in \OptDomain} \inner{\mathcal{R}_n}{h} - \widetilde{Z}_n } 
				= \BigOhPee\left( \KeyCvgRateStepIIsimplified \right).
				\label{eq:ProofStepII} 
				\tag{II} 
			\end{equation}
	\end{enumerate}
	Step~\eqref{eq:ProofStepI} uses many proof arguments from \cite{cardot2007clt} but we take extra care in keeping track of the rates of various bounds. Proposition~\ref{prop:SupTYSConvergesToZero} of the supplementary materials concludes the discussions of Step~\eqref{eq:ProofStepI}. By our underlying real valued Hilbert space structure, we can apply Riesz's representation theorem to uniquely identify $\HSpace$ with its dual $\HSpace^*$. Thus we can view the indexing of the supremum of $\mathcal{R}_n$ by $\OptDomain$ in Step~\eqref{eq:ProofStepII} as equivalent to indexing by its dual $\OptDomain^*$, which allows us to apply the tools from empirical process theory. Our desired result for Step~\eqref{eq:ProofStepII} is the contents of Proposition~\ref{prop:SupZnGaussianApproximation} in the supplementary materials, which is an application of \cite{chernozhukov2014gaussian}.

	Once Steps \eqref{eq:ProofStepI} and \eqref{eq:ProofStepII} hold, the statistic $W_n$ of \eqref{eq:LeungTamStatistic} and the displayed random variable $W_n$ are, respectively, exactly the quantities $\sup_{h \in \OptDomain} \frac{\sqrt{n}}{\sigma_\varepsilon t_n(h)}  \innersmall{ \hat{\rho} - \widehat{\Pi}_{k_n}\rho }{h}$ and $\widetilde{Z}_n$ both normalized by $1 / \beta_n$ to achieve the rate \eqref{eq:NormalizedLTStatisticCvgRate}. 
\end{proof} 

As discussed earlier, our result is a middle ground between the non-convergence (in norm topology) and the pointwise asymptotic normality of the FPCA estimator. The key contribution of our result is to further understand the asymptotic distributional properties of the FPCA estimator. Up to our knowledge, only a few select studies (most notably \cite{cardot2007clt}) have studied this problem from the perspective of inference. There are more, but still few, studies of the asymptotic distributional properties of the FPCA estimator for the purpose of prediction; for instance, see \cite{yao2005functional} and \cite{crambes2013asymptotics}, among others. 

\begin{rem}[Smoothing to eliminate asymptotic bias] 
	Note the right hand side of Step \eqref{eq:ProofStepI} is not normalized by $\frac{1}{\sqrt{n}}$. In other words, with just a scaling of $\sqrt{n}$ on $\sup_{h \in \OptDomain} \frac{\innersmall{\hat{\rho} - \widehat{\Pi}_{k_n}\rho}{h}}{\sigma_\varepsilon t_n(h)}$, its asymptotic bias terms do not converge to zero. In contrast to \cite{cardot2007clt}, here we do not benefit from the extra smoothing in a prediction problem $\innersmall{\hat{\rho} - \widehat{\Pi}_{k_n}\rho}{X_{n+1}}$, where they show normalizing by just $\sqrt{n}$ is sufficient to ensure the asymptotic bias terms will vanish; see also \cite{cai2006prediction}. 

	The sufficient condition $\SuffCondCvgRateLTStatistic \to 0$ for $W_n$ to converge in probability to $\widetilde{W}_n$ effectively depends on the speed for which the truncation parameter $k_n$ of \eqref{eq:Truncation} tends to infinity. The speed of $k_n$ in turn depends on both the speed the eigenvalues $\lambda_j$'s tend to zero, and the speed the regularization $c_n$ tends to zero. 
\end{rem} 

\section{Hypothesis testing} 
\label{sec:HypothesisTesting}
An important application of the small-uniform statistic is hypothesis testing. \cite{cardot2003testing} introduces two statistics (their $D_n$ and $T_n$; see Section~\ref{sec:ComputeWn} later) based on the norm of the cross-covariance operator $\Delta_n$ to test the hypothesis $H_0 : \rho = \rho_0$ versus $H_1 : \rho \neq \rho_0$ (e.g.\ we can say take $\rho_0$ as the zero functional). However, while their statistics test the relationship $\rho = \rho_0$ versus $\rho \neq \rho_0$, it does \emph{not} use an estimate of $\rho$ to form this test. The procedure in \cite{cardot2003testing} is, in some sense, an analysis of variance approach to testing significance. 
\footnote{
	Loosely speaking, the procedure of \cite{cardot2003testing} has the following counterpart in the finite-dimensional linear model. Let $y_i = x_i^\top \beta + \epsilon_i$ be the usual linear model in finite dimensions. Suppose we have the hypothesis $\beta = 0$. Under this null, it necessarily implies $\E[ y_i x_i]  = \E[(x_i^\top 0 + \epsilon_i) x_i] = \E[ \epsilon_i x_i] = 0$. Thus, a test of the hypothesis $\beta = 0$ is to test for zero correlation between $y_i$ and $x_i$ --- such ``correlation test'' does not require an estimate of $\beta$. 
}
In contrast, our hypothesis testing approach here directly uses the FPCA estimator of $\rho$ via the small-uniform statistic $W_n$. 

Let's summarize and outline a practical recipe to applying our main result for the purpose of hypothesis testing. 
\begin{enumerate} 
	\item Fix a statistical significance level $\alpha \in (0, 1)$ and form the hypothesis $H_0 :\ \rho = 0$, $H_1 : \rho \neq 0$.
		\footnote{
			If the hypothesis were instead $H_0 : \rho = \rho_0$, $H_1 : \rho \neq \rho_0$ for $\rho_0$ is non-zero, we consider $Y' := Y - \innersmall{X}{\widehat{\Pi}_{k_n} \rho_0}$. Then the procedure is exactly as follows but we replace the cross-covariance operator $\Delta$ of $(Y, X)$ with the cross-covariance operator $\Delta'$ of $(Y', X)$.
		}

	\item Perform functional PCA on the empirical covariance operator $\Gamma_n$ and collect the empirical eigenelements $(\hat{\lambda}_j, \hat{e}_j)$'s. 

	\item Fix regularization parameters: 
		\begin{enumerate}[(a)]
			\item Based on $\hat{\lambda}_1$ and $\hat{\delta}_1$, pick a sequence $\{c_n\}$ of positive numbers and a sequence of functions $\{f_n\}$ that satisfy Assumption~\ref{as:ConditionF}. 

			\item Compute the empirical truncation parameter $\hat{k}_n$ of \eqref{eq:EmpTruncation}.

			\item Pick a sequence of positive numbers $\{a_n\}$ that satisfy Assumption~\ref{as:ConditionR}.

			\item Pick a sequence of positive numbers $\{\beta_n\}$ that satisfy $\SuffCondEmpCvgRateLTStatistic \to 0$.  
		\end{enumerate} 

	\item Compute the FPCA estimator $\hat{\rho}$ of \eqref{eq:RhoHat}. 

	\item Pick a consistent estimator of the error standard deviation $\hat{\sigma}_\varepsilon$. 

	\item Construct the small-uniform statistic: Numerically solve the fractional programming problem,
		\begin{equation}
			W_n 
			= \frac{\sqrt{n}}{\hat{\sigma}_\varepsilon  \beta_n} \sup_{h \in \hat{\OptDomain}} \frac{\inner{\hat{\rho}}{h}}{\hat{t}_n(h)}  
			= \frac{\sqrt{n}}{\hat{\sigma}_\varepsilon \beta_n} \sup_{ \substack{b \in \R^{\hat{k}_n} \\ \norm{b} \le 1} } \frac{ \sum_{j = 1}^{\hat{k}_n} b_j \innersmall{\hat{\rho} }{\hat{e}_j}}{ \left( \sqrt{ \sum_{j = 1}^{\hat{k}_n} \hat{\lambda}_j [f_n(\hat{\lambda}_j)]^2 b_j^2 } + a_n \right)}. 
			\label{eq:PracticalOptProb}
		\end{equation}

	\item Simulate the asymptotic distribution:
		\begin{enumerate}
			\item Simulate a mean zero Gaussian process $G_{P,n}$ with covariance function \eqref{eq:SupZnGaussianApproximationCovFun}, replacing all population quantities their empirical or estimated counterparts. 

			\item Take the maximum value of this Gaussian process's sample path. 

			\item Repeat (a) and (b) many times to get a simulated distribution of the scalar random variable $\widetilde{W_n}$.

			\item Compute the quantile $q_{1 - \alpha}$; that is, $\Prob(\widetilde{W}_n \le q_{1 - \alpha}) = 1 - \alpha$. 
		\end{enumerate}

	\item Inference: Reject $H_0$ if $W_n > q_{1 - \alpha}$; otherwise, accept it. 

	\end{enumerate}

\begin{rem}[Gradient and Hessian]
	It is evident there is no closed form analytical solution to the optimization problem \eqref{eq:PracticalOptProb}. However, some numerical optimizers can greatly benefit from inputting a known gradient and the Hessian of the objective function. In particular, for $h = \sum_{j = 1}^{k_n} b_j \hat{e}_j$ with $b = (b_1, \ldots, b_{k_n})$, let
	\begin{equation*}
		L(b) 
		:= \frac{\innersmall{\hat{\rho} - \widehat{\Pi}_{k_n}\rho}{h}}{t_n(h)} 
		= \frac{ \sum_{j = 1}^{k_n} b_j \theta_j }{ \sqrt{\sum_{j = 1}^{k_n} b_j^2 \psi_j^2} + a_n} 
		=: \frac{f(b)}{g(b)}
		=: \frac{f(b)}{\sqrt{p(b)} + a_n}
	\end{equation*}
	where we let $\theta_j := \innersmall{\hat{\rho} - \widehat{\Pi}_{k_n}\rho}{\hat{e}_j}$ and $\psi_j := \sqrt{\hat{\lambda}_j} f_n(\hat{\lambda}_j)$. Direct calculations show the $l$-th element of the gradient vector is, 
	\begin{equation*}
		\frac{\partial L}{\partial b_l}
		= \frac{1}{g} \left( \theta_l - b_l \psi_l^2 \frac{f}{g \sqrt{p}} \right)
	\end{equation*}
	and the $(l',l)$-th component of the Hessian is, 
	\begin{equation*}
		\frac{\partial L}{\partial b_{l'} \partial b_l}
		= \frac{1}{g^2} \left[  - b_l \psi_l^2 \frac{g (\theta_{l'} p - b_{l'} \psi_{l'}^2 f) - b_{l'} \psi_{l'}^2 f \sqrt{p} }{ g p^{3/2} } 
		- \frac{b_{l'} \psi_{l'}^2}{\sqrt{p}} \left( \theta_{l} - b_l \psi_l^2 \frac{f}{g \sqrt{p} }  \right) \right] 
	\end{equation*}
\end{rem}

\begin{rem}[Spherical coordinates]
	As stated, \eqref{eq:PracticalOptProb} is an optimization problem with a nonlinear objective function with a norm inequality constraint. The norm inequality constraint is a nonlinear constraint. However, many local and global numerical optimizers are designed to accommodate only box constraints. By using spherical coordinates, we can replace the single norm inequality constraint with just $k_n$ number of box constraints. Specifically, pick $r \in [0,1]$, $\phi_1, \ldots, \phi_{k_n - 2} \in [0, \pi]$ and $\phi_{k_n - 1} \in [0, 2\pi)$. We can change from spherical to Euclidean coordinates via the well-known equations:
	\begin{align*}
		b_1 &= r \cos(\phi_1), \\
		b_2 &= r \sin(\phi_1) \cos(\phi_2), \\
		    & \vdots \\
		b_{k_n - 1} &= r \sin(\phi_1) \cdots \sin(\phi_{k_n - 2})\cos(\phi_{k_n - 1}), \\ 
		b_{k_n} &= r \sin(\phi_1) \cdots \sin(\phi_{k_n - 2}) \sin(\phi_{k_n - 1}).
	\end{align*}
\end{rem}

\section{Numerical simulations in hypothesis testing} 
\label{sec:NumericalSimulations}
Let's illustrate the small sample properties of our hypothesis testing procedure from Section~\ref{sec:HypothesisTesting} with numerical simulations. We focus on the Hilbert space $\HSpace = L^2([0,1], \mathcal{B}, \lambda) =: L^2([0,1])$ where $\mathcal{B}$ are the usual Borel sets in $[0, 1]$ and $\lambda$ is the Lebesgue measure in $[0, 1]$. We will focus on the case where the independent variable $X$ is a standard Brownian motion on $[0,1]$. We will use two forms of regulations: (i) ``simple'' regularization where we set $f_n(x) = 1 / x$ when $x \ge c_n$ and $0$ otherwise; and (ii) ``ridge'' regularization where $f_n(x) = 1 / (x + \alpha_n)$ if $x \ge c_n$ and $0$ otherwise. As shown in Example 2 of \cite{cardot2007clt}, we require $\alpha_n \sqrt{n} / c_n \to 0$ to satisfy Assumption~\ref{as:ConditionF}.

\subsection{Parameterization} 
\label{sec:Parameterization} 
It is well known (for instance, see Example 4.6.3 of \cite{hsing2015theoretical}) the eigenelements of the covariance operator of Brownian motion are, 
\begin{equation*}
	\lambda_j = \frac{4}{((2 j - 1) \pi)^2} 
	\quad\text{and}\quad
	e_j(t) = \sqrt{2} \sin\left( \frac{(2 j - 1) \pi}{2} t \right).
\end{equation*}
These eigenvalues satisfy Assumption~\ref{as:ConditionA}, as $\lambda_j = \BigOh(\frac{1}{j^2}) \le \BigOh(\frac{1}{j \log j})$. In particular we have $\delta_j = \lambda_j - \lambda_{j + 1}$ for $j \ge 2$, and so $\lambda_j + \frac{\delta_j}{2} = \frac{4 (4 j^2 + 8 j + 1)}{\pi^2 (2j + 1)^2 (2j - 1)^2} \lesssim \BigOh( \frac{1}{j^2} )$. Recalling \eqref{eq:Truncation} and $\{ c_n \}$ is a sequence tending to zero, the above implies the upper bound $k_n \lesssim \BigOh\left( \frac{1}{\sqrt{c_n}} \right)$.

With respect to the required rate of our Theorem~\ref{thm:NormalizedLeungTamStatisticConvergence} and Assumption~\ref{as:ConditionR}, we choose $\beta_n = (\log n)^2$. Consequently, we have the bounds $\SuffCondCvgRateLTStatistic \lesssim \BigOh\left( \frac{1}{\log n} \frac{1}{c_n^{13/4}} \left( \log \frac{1}{\sqrt{c_n}} \right)^{9/2} \right)$ and $a_n \sqrt{k_n \log k_n} \lesssim \BigOh\left( a_n \frac{1}{\sqrt{c_n}} \log \frac{1}{\sqrt{c_n}} \right)$. For the ridge regularization, we also need a choice of $\{ \alpha_n \}$ such that $\alpha_n \sqrt{n} / c_n \to 0$. So in all, the $c_n$, $a_n$ and $\alpha_n$, all tending to zero, must also satisfy the three requirements: (i) $\frac{1}{\log n} \frac{1}{c_n^{13/4}} \left( \log \frac{1}{\sqrt{c_n}} \right)^{9/2} \to 0$; (ii) $a_n \frac{1}{\sqrt{c_n}} \log \frac{1}{\sqrt{c_n}} \to 0$; and (iii) $\alpha_n \sqrt{n} / c_n \to 0$.

For our illustrations we pick $c_n = \frac{C}{\log \log n}$, $a_n = \frac{1}{n^2}$, and $\alpha_n = \frac{1}{\sqrt{n} \log n}$. It is easy to show these choices of $c_n$'s, $a_n$'s and $\alpha_n$'s will satisfy the aforementioned requirements (i) to (iii). However in finite samples the choice of the constant $C$ in $c_n$ has a material impact on the numerical results. We consider the choices $C = \lambda_1^c$ (deterministic case) and $C = \hat{\lambda}_1^c$ (data based case) for $c = 2, 3, 5, 7$ and $8$. In the deterministic case, we assume we know perfectly the values $\lambda_j$'s as per the above displayed equation, and this correspondingly implies deterministic quantities $k_n$ of \eqref{eq:Truncation} and $f_n$ (through the defining condition $x \ge c_n$). In the data based case, we use the random truncation $\hat{k}_n$ and the corresponding data dependent $f_n$. 
\footnote{
	We emphasize in the deterministic case, it is only in the calculations of $C = \lambda_1^c$, $k_n$ and $f_n$ do we assume we have perfect knowledge of the eigenvalues $\lambda_j$'s. The eigendecomposition of $\Delta_n$ are still based on the random observations $X_i$'s in our simulations. 
}
The exponents $c$ are chosen as such because they generate a good range of truncation parameter $k_n$ and $\hat{k}_n$ values for our numerical illustrations; higher values of $c$ imply larger values of $k_n$ and $\hat{k}_n$.

We will consider three different coefficient vectors in $L^2([0, 1])$: 
\begin{itemize} 
	\item $\rho_0(t) \equiv 0$; 

	\item $\rho_1(t) = \sin\left(\frac{\pi}{2} t \right) + \frac{1}{2} \sin\left(\frac{3 \pi}{2} t \right) + \frac{1}{4} \sin\left( \frac{5 \pi}{2} t \right)$; and 

	\item $\rho_2(t) = \sin(2 \pi t^3)^3$. 
\end{itemize} 
The first choice $\rho_0$ is used to evaluate the size of our small-uniform statistic
$W_n$, while $\rho_1$ and $\rho_2$ are used to evaluate power. The
second choice is a case where the coefficient vector is exactly spanned by the
first three eigenvectors of the Brownian motion covariance operator. The third
choice is an example where the coefficient vector cannot be linearly spanned by
those eigenvectors. We note \cite{cardot2003testing} also numerically
illustrates cases 1 and 3, while \cite{cardot1999functional} illustrates case
2. We fix the noise $\varepsilon$ distribution as a Gaussian $\mathcal{N}(0,
\sigma_\varepsilon^2)$ distribution where we pick variance $\sigma_\varepsilon^2 = \frac{1
- \mathrm{snr}}{\mathrm{snr}} \Var(\innersmall{X}{\rho})$, and where $\mathrm{snr}$
is the  ``signal-to-noise ratio'' and we let it to be $\mathrm{snr} = 5\%$ and
$10\%$. To focus the discussion on the properties of our small-uniform
statistic and not on the estimation performance of an error estimator
$\hat{\sigma}_\varepsilon$, we assume throughout all these numerical
simulations that the noise parameter $\sigma_\varepsilon^2$ is known with certainty. 

For each of the three example coefficient vectors and each of the two noise distributions, we run $n_s = 2500$ simulations of $\{ (Y_i, X_i) \}_{i = 1}^n$ for each of the sample size choices $n = 50, 200, 1000$. The Brownian motion $X_i$ and the function $\rho$ are discretized by 100 equispaced points in $[0, 1]$, and the $L^2([0, 1])$ inner product is approximated by the trapezoid rule. The eigenelements of the empirical covariance operator are computed using the \texttt{fdapace} package
\footnote{
	\url{https://cran.r-project.org/web/packages/fdapace/}, version 0.5.5.
}
of the \texttt{R} language. 

Once the FPCA estimator $\hat{\rho}$ is constructed as per \eqref{eq:RhoHat}, we can
evaluate it pointwise on $[0, 1]$ as $\hat{\rho}(t) = \sum_{j = 1}^{k_n}
\inner{\hat{\rho}}{\hat{e}_j} \hat{e}_j(t)$. At the end of each simulation
round, we will also compute and record the quadratic error measure,
\begin{align}
	\mathrm{error}(\rho)
	&= 
	\begin{dcases}
		\int_0^1 (\rho(t) - \hat{\rho}(t))^2 \diff{t}, & \text{if $\rho \equiv 0$;
		and} \\
		\frac{\int_0^1 (\rho(t) - \hat{\rho}(t))^2 \diff{t}}{\int_0^1 \rho(t)^2 \diff{t} }, & \text{otherwise.}
	\end{dcases} 
	\label{eq:ErrorMeasure}
\end{align}

\subsection{Computing $W_n$} 
\label{sec:ComputeWn}
For succinctness in discussing both the deterministic truncation case and
the data driven truncation case, let's denote $K_n \in \{ k_n, \lceil \hat{k}_n \, \textrm{avg} \rceil \}$. Here $\hat{k}_n \, \textrm{avg}$ denotes the averaged random
truncations over the $n_s$ number of simulations for a given sample size choice
$n$, and $\lceil \cdot \rceil$ is the ceiling function. That is to say, if we
use deterministic truncation we simply set $K_n$ to the known value $k_n$, and if we were to use a
data driven truncation, we set $K_n$ to be the averaged truncation parameter.

The computation of $W_n$ is as written in \eqref{eq:PracticalOptProb}. As
mentioned above, we evaluate $W_n$ under a known standard deviation
$\sigma_\varepsilon$ of the error distribution. The
optimization step in \eqref{eq:PracticalOptProb} is computed using a
combination of a global and local search. A uniformly random point in drawn in
$\R^{K_n}$ and this serves as the initial point in the constrained nonlinear global
optimizer \texttt{ISRES}
\footnote{
	We set a relative error tolerance of $10^{-4}$ and a maximum of $100$ runs. 
}
of \cite{runarsson2005search}. To further refine the solution, we take the
resulting solution point and set that as the initial point in the constrained
nonlinear local optimizer \texttt{COBYLA} 
\footnote{
	Again, we set a relative error tolerance of $10^{-4}$ and a maximum of $100$ runs. 
}
of \cite{powell1994direct}. The end of this procedure results in our small-uniform statistic $W_n$. 
Although not thoroughly experimented in this paper, but we sense the specific
choices of these numerical optimization algorithms are not particularly important. 

As a matter of comparison, we will also compute the $D_n$ and $T_n$ statistics
of \cite{cardot2003testing}. These two statistics are defined as, 
\begin{equation*}
	D_n := \frac{1}{\sigma_\varepsilon^2} \norm{\sqrt{n} \Delta_n \hat{A}_n}^2,
	\quad
	T_n := \frac{D_n - k_n}{\sqrt{k_n}} 
\end{equation*}
where $\hat{A}_n := \sum_{j = 1}^{k_n} \frac{1}{\hat{\lambda}_j} \hat{e}_j
\otimes \hat{e}_j$. \cite{cardot2003testing} show that under the null
hypothesis, $D_n \weakcvgto \chi^2(k_n)$ and $T_n \weakcvgto \mathcal{N}(0,
2)$. Let $q_{\chi^2(k_n), 1 - \alpha}$ denote the quantile
$\Prob(\chi^2(k_n) \le q_{\chi^2(k_n), 1 - \alpha}) = 1 - \alpha$ and
$q_{\mathcal{N}(0,1), 1 - \alpha / 2}$ denote the quantile $\Prob( \mathcal{N}(0,
1) \le q_{\mathcal{N}(0,1), 1 - \alpha / 2}) = 1 - \alpha / 2$. Then we reject the
null hypothesis $H_0 : \rho = 0$ using the $D_n$ statistic if $D_n >
q_{\chi^2(k_n), 1 - \alpha}$ and
reject the null hypothesis using the $T_n$ statistic if $\abs{T_n} > \sqrt{2}
q_{\mathcal{N}(0,1), 1 - \alpha / 2}$. Otherwise, we accept the null hypothesis.
Notice we can only make the comparison of these two statistics against our small-uniform
statistic $W_n$ in the deterministic truncation parameter $k_n$ case. 

\subsection{Simulating $\widetilde{W}_n$} 
\label{sec:SimTildeWn} 
The distribution of the supremum of our Gaussian process $\widetilde{W}_n$ must be numerically simulated. Note in the data driven case, it necessarily implies a mismatch between the truncation $\hat{k}_n$ that was used to compute each small-uniform statistic $W_n$ for a given simulation epoch, and the asymptotic distribution approximation $\widetilde{W}_n$ that depends on $\hat{k}_n \, \textrm{avg}$. 

Let's describe our simulation procedure. We first uniformly draw 25 points on the boundary of a $K_n$-sphere, and then uniformly draw another 25 points in the interior of that $K_n$-sphere; that is, a total of $25^2 = 625$ of $K_n$-vectors are drawn. We evaluate the covariance function \eqref{eq:SupZnGaussianApproximationCovFun} on the Cartesian product of these 625 points, and this results in a $625 \times 625$ dimensional covariance matrix. 
\footnote{
	For $h_l \in \OptDomain$, $l = 1, 2$ we can write $h_l = \sum_{j = 1}^{K_n} b_j e_j$ where $b = (b_1, \ldots, b_{K_n})$ is a real Euclidean vector in the $K_n$ unit sphere. Consequently, the covariance function \eqref{eq:SupZnGaussianApproximationCovFun} can be more explicitly written as a function on the Cartesian product of two $K_n$ unit spheres,
	\begin{equation*}
		c_n(x, y) 
		= \frac{ \sum_{j = 1}^{K_n} \lambda_j f_n(\lambda_j)^2 x_j y_j}{ \left( \sqrt{
				\sum_{j = 1}^{K_n} \lambda_j f_n(\lambda_j)^2 x_j^2 } + a_n \right) \left(
		\sqrt{ \sum_{j = 1}^{K_n} \lambda_j f_n(\lambda_j)^2 y_j^2 } + a_n  \right) } 
	\end{equation*} 
	where $\norm{x}_{\R^{K_n}} \le 1$ and $\norm{y}_{\R^{K_n}} \le 1$.
}
We draw an observation from a $625$-dimensional mean zero multivariate normal distribution with this covariance matrix. This observation represents one sample path of the Gaussian process $G_{P,n}$. We record the maximum value of this sample path. 

We repeat the above procedure $1.6$ million times. The end result is we will have 1.6 million maximum values and these values represent the simulated distribution of the random variable $\widetilde{Z}_n$ of Theorem~\ref{thm:NormalizedLeungTamStatisticConvergence}. Finally, we normalize $\widetilde{Z}_n$ by $\frac{1}{\beta_n} = \frac{1}{(\log n)^2}$ to arrive at the simulated distribution of $\widetilde{W}_n$. Figure~\ref{fig:TildeWnHistogram} plots the results of the simulations. The quantile numbers $q_{1 - \alpha}$ are accordingly numerically computed. 

\begin{landscape}
\begin{figure}
	\centering
	\begin{subfigure}[b]{0.33\textwidth} 
		\centering
		\includegraphics[width=\linewidth]{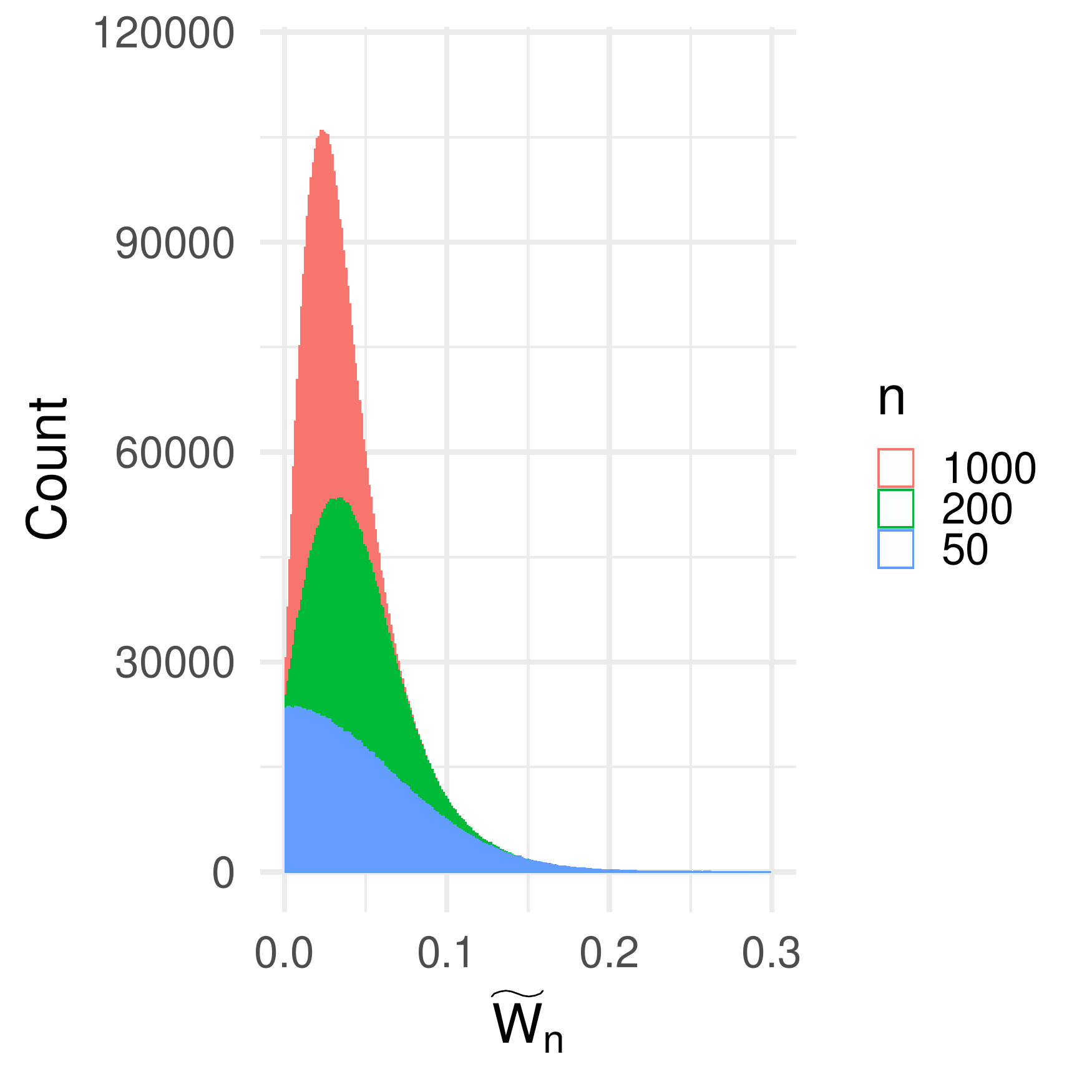} 
		\caption{$c = 3$}
	\end{subfigure} 
	\hfill
	\begin{subfigure}[b]{0.33\textwidth} 
		\centering
		\includegraphics[width=\linewidth]{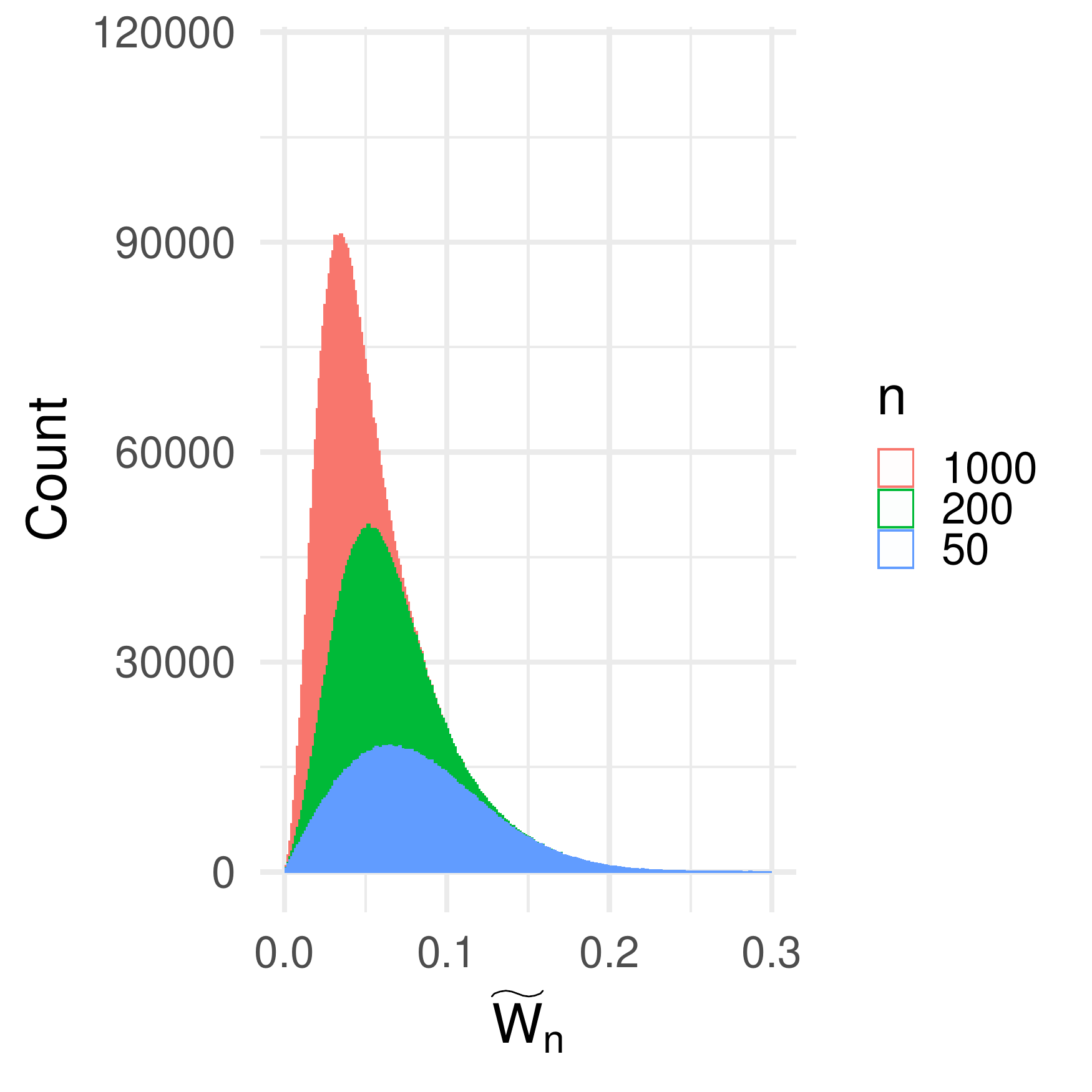} 
		\caption{$c = 4$}
	\end{subfigure} 
	\hfill
	\begin{subfigure}[b]{0.33\textwidth} 
		\centering
		\includegraphics[width=\linewidth]{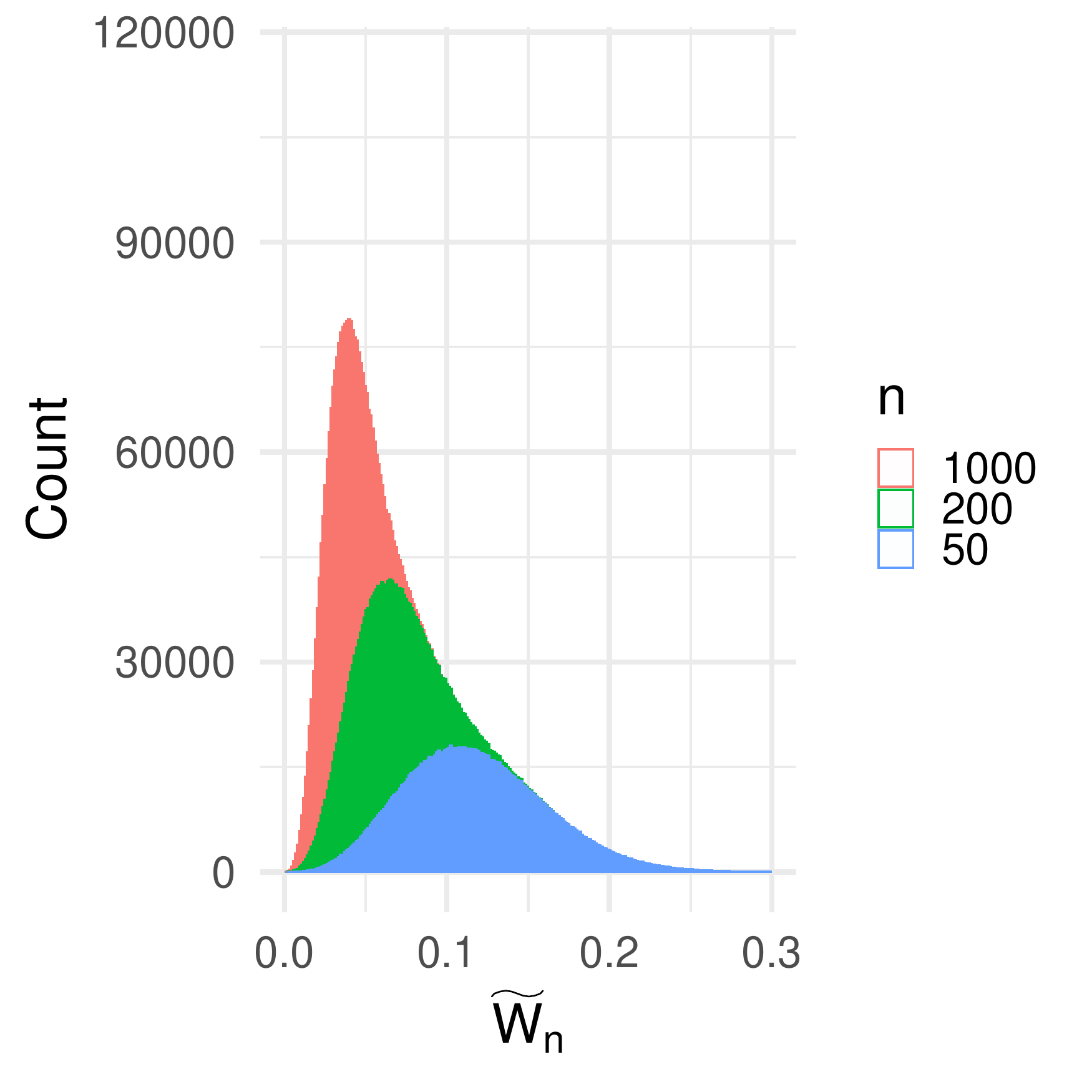} 
		\caption{$c = 5$}
	\end{subfigure} 
	\\
	\begin{subfigure}[b]{0.33\textwidth} 
		\centering
		\includegraphics[width=\linewidth]{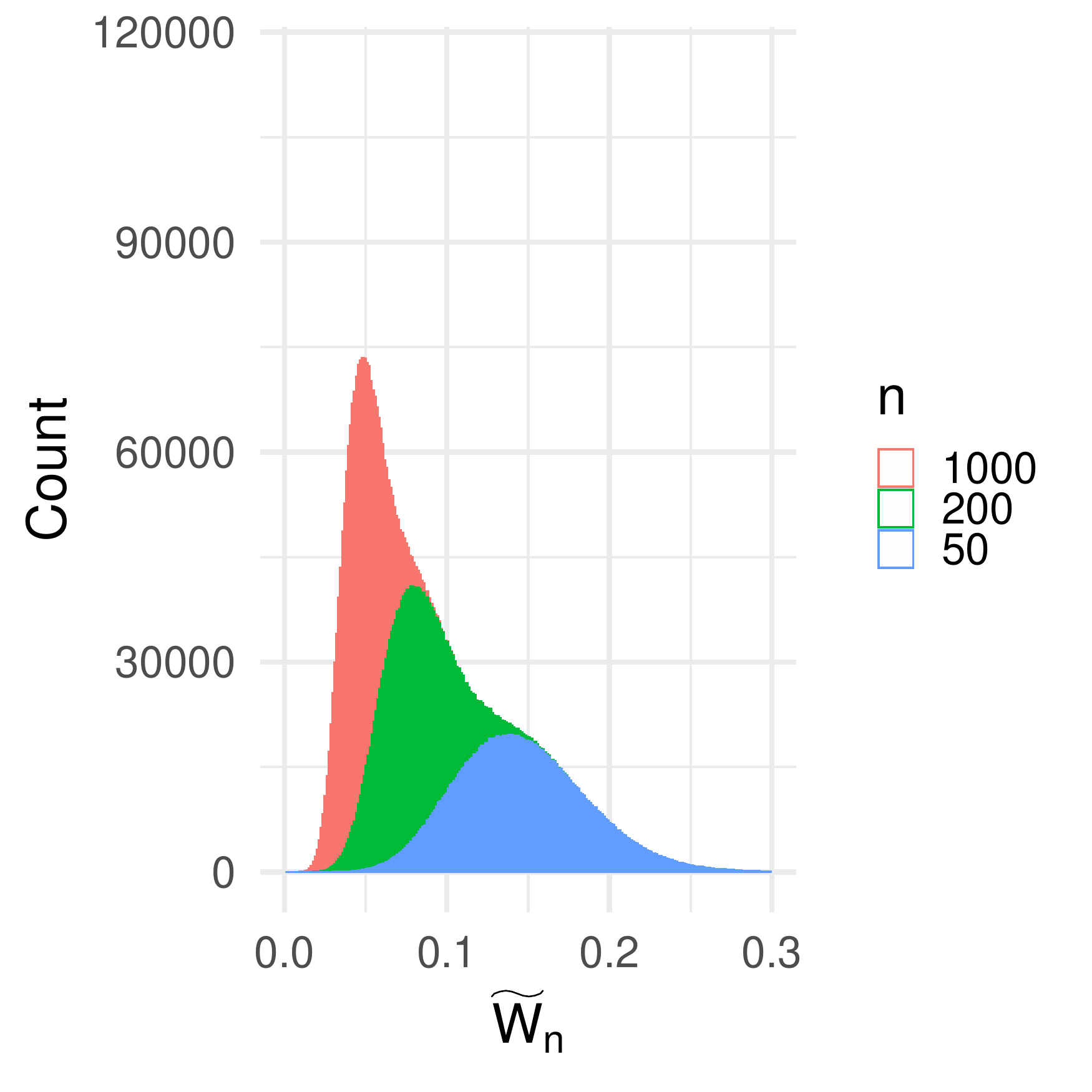} 
		\caption{$c = 7$}
	\end{subfigure} 
	\quad\quad
	\begin{subfigure}[b]{0.33\textwidth} 
		\centering
		\includegraphics[width=\linewidth]{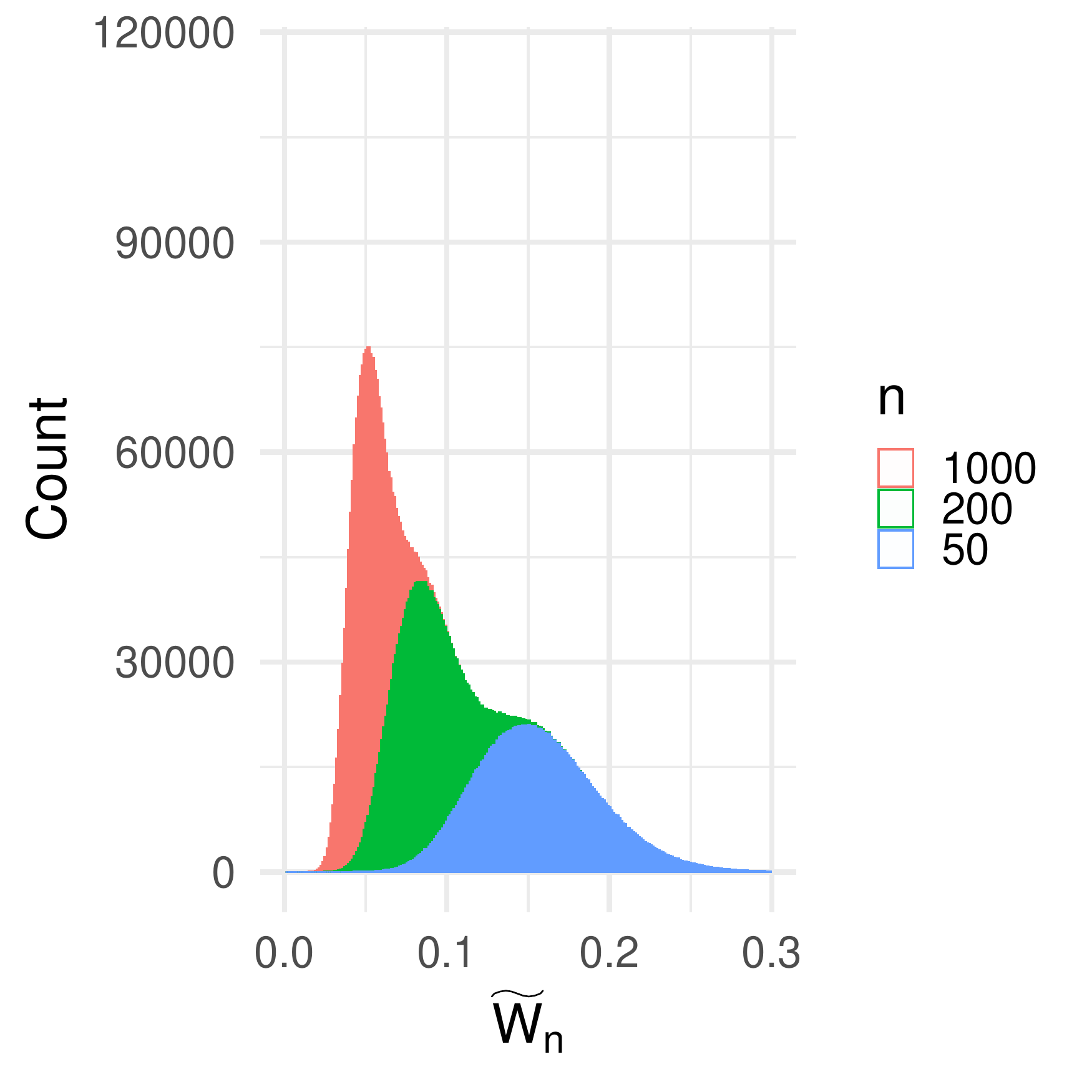} 
		\caption{$c = 8$}
	\end{subfigure} 
	\caption{%
		Histograms of the distribution of $\widetilde{W}_n$ for various sample sizes $n$ and various exponents $c$ when the FPCA uses ridge regularization. These plots are best seen in color. Details of the parameterization are described in Section~\ref{sec:Parameterization}. The procedure for simulating $\widetilde{W}_n$ is described in Section~\ref{sec:SimTildeWn}. The histogram plots for when the FPCA uses simple regularization are similar; they are not shown for brevity.
	}
	\label{fig:TildeWnHistogram} 
\end{figure} 
\end{landscape}

\begin{rem}
	In this numerical simulation exercise, it was far more time consuming to simulate and compute the statistic $W_n$ than simulating its asymptotic approximation $\widetilde{W}_n$. Simply put, both the spectral decomposition of $\Gamma_n$ and the numerical optimization steps in computing $W_n$ are computationally expensive, and made even more so when we have to do this $n_s$ many times across various sample size choices $n$. Of course, in actual practice where $W_n$ is only computed once based on the given data, the computation time of a single $W_n$ is negligible. 
\end{rem} 

\subsection{Discussion of the numerical simulation results} 
Choosing the coefficient vector as $\rho_0(t) \equiv 0$ and a Gaussian noise distribution, Table~\ref{tab:psi1_data_based_kn_gaussian_nu_1900} ($\mathrm{snr} = 5\%$) and Table~\ref{tab:psi1_data_based_kn_gaussian_nu_1900} ($\mathrm{snr} = 5\%$) show the results when we use a data based truncation parameter $\hat{k}_n$; and Table~\ref{tab:psi1_oracle_kn_gaussian_nu_900} ($\mathrm{snr} = 5\%$) and Table~\ref{tab:psi1_oracle_kn_gaussian_nu_1900} ($\mathrm{snr} = 5\%$) show the case when we use a deterministic truncation $k_n$. Thus these tables illustrate the size properties of our small-uniform statistic. Firstly, we see there is little qualitative difference of the levels between the deterministic and data driven truncation cases, which suggests random variations in the eigenelements, and hence in the determination of $\hat{k}_n$, do not substantially affect the estimated levels. Thus for the remainder of this section, we will focus on the deterministic truncation $k_n$ case, as this focus allows us to further compare our $W_n$ statistic against \cite{cardot2003testing}'s $D_n$ and $T_n$ statistics.

Let's focus on Table~\ref{tab:psi1_oracle_kn_gaussian_nu_1900} with $\mathrm{snr} = 5\%$. We see the estimated size of our small-uniform statistic $W_n$ (for both the reciprocal and ridge regularization cases) matches the simulated levels of its asymptotic distribution $\widetilde{W}_n$ when the truncation $k_n$ is small. However, this matching deteriorates as the truncation increases, and perhaps paradoxically, also deteriorates with larger sample sizes. This can be explained from the log errors: as the truncation and sample size increases, the quality of the estimator $\hat{\rho}$ of the true coefficient $\rho_0 \equiv 0$ decreases. Indeed, when the true coefficient $\rho_0$ is zero, the ``optimal'' truncation should simple be $k_n = 0$. In all, our numerical results suggest the FPCA estimator (and especially the case of simple regularization) has significant difficulty in estimating a null coefficient. And since our small-uniform statistic $W_n$ is based on the FPCA estimator, it is thus no surprise the size performance of $W_n$ is also necessarily hampered. In contrast, the $D_n$ and $T_n$ statistics of \cite{cardot2003testing} do not depend on the FPCA estimator, and their nominal levels appear to be stable across truncations and sample sizes.  Table~\ref{tab:psi1_oracle_kn_gaussian_nu_900} is the results with a higher $\mathrm{snr} = 10\%$ and exhibit the same qualitative behavior of $W_n, D_n$ and $T_n$ as discussed above.

Let's now discuss the empirical power of our statistic $W_n$.  Tables~\ref{tab:psi2_oracle_kn_gaussian_nu_1900} ($\mathrm{snr} = 5\%$) and \ref{tab:psi2_oracle_kn_gaussian_nu_900} ($\mathrm{snr} = 10\%$) show the results for the power against $\rho_1$. By design, $\rho_1$ is a linear combination of the first three eigenvectors of $\Gamma$, and so the ``optimal'' truncation $k_n$ for $\rho_1$ is exactly $3$. Hence, we should expect the best performance for all the statistics $W_n, D_n$ and $T_n$ at $k_n = 3$ (i.e.\ $c = 4$). Even with a modest sample size of $n = 200$, it appears the empirical power of $W_n$ (for both the simple and ridge regularizations) yield qualitatively almost identical power to that of $D_n$ and $T_n$. For the other truncation cases (i.e.\ corresponding to $c = 3, 5, 7$ and $8$), it appears $W_n$, again for both the simple and ridge regularizations, yield higher power than $D_n$ and $T_n$. However this observation is not without reservations. On the one hand, higher truncations lead to higher log quadratic errors of $\hat{\rho}$. But on the other hand, it could very well be possible that the estimated coefficient $\hat{\rho}$ doesn't resemble the true coefficient $\rho_1$, but $\hat{\rho}$ nonetheless is still significantly different from the null vector, and that the optimizing nature of $W_n$ can advantage of this.  Thus this suggests our small-uniform statistic $W_n$ is robust at rejecting the null hypothesis $H_0:\ \rho = 0$ even if the underling FPCA estimator $\hat{\rho}$ has high estimation error, as is most evident when using the simple regularization, in finite samples.

Finally, Tables~\ref{tab:psi3_oracle_kn_gaussian_nu_1900} ($\mathrm{snr} = 5\%$) and \ref{tab:psi3_oracle_kn_gaussian_nu_900} ($\mathrm{snr} = 10\%$) show the results for the power against $\rho_2$. This coefficient vector $\rho_2$ is designed such that it is not a linear combination of the eigenvectors of $\Gamma$ and so higher truncations $k_n$ should yield better results. This coefficient vector $\rho_2$ example is particularly important because real world coefficient vectors of the FLM are highly unlikely to be just simple linear combinations of the eigenvectors of $\Gamma$.  Here, the power of our small-uniform statistic $W_n$ outperforms that of $D_n$ and $T_n$, especially at high truncations. Although it is not the purpose of this paper to empirically evaluate the performance of various regularization regimes, it does appear that the log quadratic error of the FPCA estimator under ridge regularization is substantially lower than when the FPCA estimator uses simple regularization. 

\begin{landscape}
\begin{table}

\caption{\label{tab:psi1_oracle_kn_gaussian_nu_1900}The empirical power (in percentages) of our small-uniform $W_n$ statistic along with \cite{cardot2003testing}'s $D_n$ and $T_n$ statistics when $\rho(t) = \rho_0(t) \equiv 0$ and $\varepsilon_i$ has a $\mathcal{N}(0, \sigma_{\varepsilon}^2)$ distribution with $\sigma_{\varepsilon}^2 = \frac{1 - \mathrm{snr}}{\mathrm{snr}} \mathrm{Var}(\inner{X}{\rho})$ with $\mathrm{snr} = 5\%$. Here we assume the truncation parameter $k_n$ is known.  The $n$ here refers to sample size, and $c$ here refers to the exponent associated with the definition of $c_n$. The ``log error'' here refers to the average over all the simulations of the log of the error measure as given in \eqref{eq:ErrorMeasure}. Section~\ref{sec:SimTildeWn} describes our procedure to obtain the simulated levels of $\widetilde{W}_n$. The nominal levels of $D_n$ and $T_n$ are based on their respective asymptotic distributions as described in Section~\ref{sec:ComputeWn}.}
\centering
\resizebox{\linewidth}{!}{
\fontsize{10}{12}\selectfont
\begin{tabular}[t]{cccccccccccccccccccc}
\toprule
\multicolumn{2}{c}{ } & \multicolumn{5}{c}{\textbf{$\boldsymbol{\widetilde{W}_n}$ (simple regularization)}} & \multicolumn{5}{c}{\textbf{$\boldsymbol{\widetilde{W}_n}$ (ridge regularization)}} & \multicolumn{4}{c}{ } & \multicolumn{4}{c}{ } \\
\cmidrule(l{3pt}r{3pt}){3-7} \cmidrule(l{3pt}r{3pt}){8-12}
\multicolumn{3}{c}{ } & \multicolumn{4}{c}{\textbf{Simulated level}} & \multicolumn{1}{c}{ } & \multicolumn{4}{c}{\textbf{Simulated level}} & \multicolumn{4}{c}{\textbf{Nominal level of $\boldsymbol{D_n}$}} & \multicolumn{4}{c}{\textbf{Nominal level of $\boldsymbol{T_n}$}} \\
\cmidrule(l{3pt}r{3pt}){4-7} \cmidrule(l{3pt}r{3pt}){9-12} \cmidrule(l{3pt}r{3pt}){13-16} \cmidrule(l{3pt}r{3pt}){17-20}
$n$ & $k_n$ & log error & 1 & 5 & 10 & 20 & log error & 1 & 5 & 10 & 20 & 1 & 5 & 10 & 20 & 1 & 5 & 10 & 20\\
\midrule
\addlinespace[0.3em]
\multicolumn{20}{l}{\textbf{$c = 3$}}\\
\hspace{1em}50 & 2 & -327.44 & 2.24 & 9.60 & 16.80 & 31.04 & -376.72 & 2.60 & 10.56 & 18.00 & 31.60 & 0.92 & 5.36 & 10.64 & 20.04 & 2.76 & 5.68 & 8.04 & 10.76\\
\hspace{1em}200 & 2 & -321.85 & 0.76 & 3.40 & 7.32 & 15.04 & -349.58 & 0.44 & 3.44 & 7.16 & 14.36 & 0.84 & 4.64 & 9.96 & 20.00 & 2.48 & 4.84 & 7.12 & 10.40\\
\hspace{1em}1000 & 2 & -450.00 & 1.20 & 5.24 & 9.52 & 18.64 & -469.85 & 1.04 & 5.04 & 9.80 & 19.92 & 1.32 & 5.64 & 10.16 & 19.92 & 3.36 & 5.84 & 7.36 & 10.48\\
\addlinespace[0.3em]
\multicolumn{20}{l}{\textbf{$c = 4$}}\\
\hspace{1em}50 & 3 & -133.88 & 2.08 & 9.44 & 15.48 & 26.28 & -229.18 & 2.12 & 8.52 & 15.36 & 25.96 & 1.24 & 5.44 & 10.84 & 19.96 & 2.72 & 5.52 & 7.36 & 11.28\\
\hspace{1em}200 & 3 & -206.36 & 0.44 & 3.60 & 7.04 & 13.92 & -264.24 & 0.92 & 3.20 & 6.84 & 15.48 & 0.56 & 5.12 & 9.92 & 18.60 & 2.08 & 5.16 & 6.80 & 10.48\\
\hspace{1em}1000 & 3 & -303.70 & 0.68 & 4.40 & 9.16 & 18.04 & -339.78 & 0.80 & 4.16 & 8.60 & 16.76 & 0.92 & 4.92 & 10.24 & 19.96 & 2.60 & 4.96 & 7.32 & 10.72\\
\addlinespace[0.3em]
\multicolumn{20}{l}{\textbf{$c = 5$}}\\
\hspace{1em}50 & 4 & 58.28 & 3.32 & 9.24 & 15.56 & 25.72 & -139.09 & 1.48 & 6.52 & 11.92 & 21.16 & 1.12 & 5.08 & 9.56 & 19.60 & 2.24 & 5.08 & 6.44 & 12.12\\
\hspace{1em}200 & 4 & -42.88 & 1.84 & 7.20 & 12.76 & 24.64 & -164.06 & 2.24 & 7.68 & 13.48 & 24.72 & 1.00 & 4.84 & 9.36 & 19.40 & 2.48 & 4.80 & 6.60 & 11.16\\
\hspace{1em}1000 & 5 & -202.19 & 2.04 & 7.48 & 14.20 & 25.56 & -259.44 & 1.80 & 7.08 & 13.64 & 24.80 & 0.68 & 4.76 & 9.32 & 19.40 & 2.12 & 4.52 & 6.52 & 13.48\\
\addlinespace[0.3em]
\multicolumn{20}{l}{\textbf{$c = 7$}}\\
\hspace{1em}50 & 9 & 356.97 & 13.52 & 25.32 & 34.68 & 45.00 & -75.02 & 2.64 & 8.04 & 14.48 & 24.76 & 1.16 & 5.12 & 10.32 & 19.60 & 2.32 & 4.52 & 8.12 & 17.60\\
\hspace{1em}200 & 10 & 258.27 & 12.68 & 28.32 & 40.00 & 54.48 & -74.13 & 4.28 & 11.88 & 19.04 & 30.60 & 0.88 & 5.64 & 11.40 & 21.40 & 1.60 & 5.04 & 9.08 & 17.60\\
\hspace{1em}1000 & 11 & 121.34 & 12.60 & 27.12 & 38.76 & 53.80 & -99.45 & 6.28 & 18.80 & 28.72 & 42.92 & 0.96 & 5.40 & 10.92 & 21.00 & 1.64 & 4.44 & 8.84 & 18.68\\
\addlinespace[0.3em]
\multicolumn{20}{l}{\textbf{$c = 8$}}\\
\hspace{1em}50 & 14 & 502.37 & 18.72 & 32.68 & 41.52 & 51.80 & -60.25 & 2.40 & 8.00 & 13.48 & 22.72 & 1.60 & 5.96 & 11.52 & 21.16 & 2.40 & 5.48 & 9.76 & 19.76\\
\hspace{1em}200 & 16 & 400.44 & 19.52 & 35.36 & 44.52 & 56.88 & -51.52 & 4.28 & 13.16 & 21.04 & 33.24 & 1.40 & 6.08 & 11.72 & 21.72 & 2.00 & 5.56 & 10.60 & 20.76\\
\hspace{1em}1000 & 17 & 267.09 & 21.60 & 38.04 & 48.84 & 62.04 & -65.88 & 6.32 & 16.24 & 24.52 & 36.32 & 2.16 & 7.56 & 13.44 & 25.36 & 2.96 & 6.40 & 11.04 & 21.40\\
\bottomrule
\end{tabular}}
\end{table}

\begin{table}

\caption{\label{tab:psi1_data_based_kn_gaussian_nu_1900}The empirical power (in percentages) of our small-uniform $W_n$ statistic when $\rho(t) = \rho_0(t) \equiv 0$ and $\varepsilon_i$ has a $\mathcal{N}(0, \sigma_{\varepsilon}^2)$ distribution with $\sigma_{\varepsilon}^2 = \frac{1 - \mathrm{snr}}{\mathrm{snr}} \mathrm{Var}(\inner{X}{\rho})$ with $\mathrm{snr} = 5\%$. Here we use a data driven truncation parameter $\hat{k}_n$. In particular, ``$\hat{k}_n$ avg'' is the average truncation value over $n_s$ number of simulations, and ``$\hat{k}_n$ std'' is the associated standard error. The $n$ here refers to sample size, and $c$ here refers to the exponent associated with the definition of $c_n$. The ``log error'' here refers to the average over all the simulations of the log of the error measure as given in \eqref{eq:ErrorMeasure}. Section~\ref{sec:SimTildeWn} describes our procedure to obtain the simulated levels of $\widetilde{W}_n$. }
\centering
\fontsize{10}{12}\selectfont
\begin{tabular}[t]{rrrrrrrrrrrrr}
\toprule
\multicolumn{3}{c}{ } & \multicolumn{5}{c}{\textbf{Simulated level of $\boldsymbol{\widetilde{W}_n}$ (simple)}} & \multicolumn{5}{c}{\textbf{Simulated level of $\boldsymbol{\widetilde{W}_n}$ (ridge)}} \\
\cmidrule(l{3pt}r{3pt}){4-8} \cmidrule(l{3pt}r{3pt}){9-13}
$n$ & $\hat{k}_n$ avg & $\hat{k}_n$ std & log error & 1 & 5 & 10 & 20 & log error & 1 & 5 & 10 & 20\\
\midrule
\addlinespace[0.3em]
\multicolumn{13}{l}{\textbf{$c = 3$}}\\
\hspace{1em}50 & 1.47 & 0.60 & -313.18 & 2.24 & 9.72 & 16.80 & 31.04 & -363.41 & 2.64 & 10.56 & 18.00 & 31.56\\
\hspace{1em}200 & 1.57 & 0.50 & -416.98 & 0.76 & 3.36 & 7.28 & 14.80 & -436.73 & 0.44 & 3.44 & 7.20 & 14.36\\
\hspace{1em}1000 & 1.93 & 0.26 & -473.03 & 1.20 & 5.20 & 9.52 & 18.68 & -488.62 & 1.04 & 5.08 & 9.88 & 19.92\\
\addlinespace[0.3em]
\multicolumn{13}{l}{\textbf{$c = 4$}}\\
\hspace{1em}50 & 2.43 & 1.18 & -137.21 & 2.12 & 9.56 & 15.52 & 26.32 & -244.48 & 2.12 & 8.52 & 15.32 & 25.96\\
\hspace{1em}200 & 2.45 & 0.59 & -234.49 & 0.64 & 4.08 & 7.80 & 15.60 & -289.46 & 1.00 & 3.36 & 7.32 & 15.76\\
\hspace{1em}1000 & 2.64 & 0.48 & -355.21 & 0.72 & 4.60 & 9.48 & 18.40 & -390.93 & 0.80 & 4.16 & 8.36 & 16.60\\
\addlinespace[0.3em]
\multicolumn{13}{l}{\textbf{$c = 5$}}\\
\hspace{1em}50 & 3.91 & 2.27 & 29.62 & 3.24 & 9.20 & 15.56 & 25.68 & -168.98 & 1.52 & 6.72 & 12.16 & 21.60\\
\hspace{1em}200 & 3.85 & 1.07 & -70.54 & 1.84 & 7.16 & 12.72 & 24.64 & -184.19 & 2.12 & 7.56 & 13.24 & 24.44\\
\hspace{1em}1000 & 4.02 & 0.53 & -207.77 & 1.92 & 7.28 & 13.96 & 24.84 & -262.03 & 1.80 & 7.36 & 14.40 & 26.08\\
\addlinespace[0.3em]
\multicolumn{13}{l}{\textbf{$c = 7$}}\\
\hspace{1em}50 & 10.39 & 7.65 & 332.29 & 13.52 & 25.36 & 34.76 & 45.12 & -91.57 & 2.44 & 7.72 & 13.64 & 23.92\\
\hspace{1em}200 & 9.59 & 3.53 & 227.29 & 12.68 & 28.12 & 39.56 & 54.00 & -85.21 & 4.40 & 12.16 & 19.80 & 31.52\\
\hspace{1em}1000 & 9.71 & 1.59 & 86.75 & 13.00 & 28.24 & 39.72 & 55.20 & -112.72 & 6.56 & 19.60 & 29.96 & 44.36\\
\addlinespace[0.3em]
\multicolumn{13}{l}{\textbf{$c = 8$}}\\
\hspace{1em}50 & 16.41 & 11.60 & 473.32 & 18.96 & 33.04 & 42.32 & 52.56 & -75.49 & 2.44 & 8.32 & 14.04 & 23.24\\
\hspace{1em}200 & 15.09 & 6.62 & 360.73 & 19.28 & 34.92 & 44.32 & 56.44 & -59.58 & 4.32 & 13.16 & 21.04 & 33.24\\
\hspace{1em}1000 & 15.12 & 2.83 & 228.74 & 22.56 & 39.12 & 50.28 & 63.60 & -74.90 & 6.40 & 16.36 & 24.76 & 36.76\\
\bottomrule
\end{tabular}
\end{table}

\begin{table}

\caption{\label{tab:psi1_oracle_kn_gaussian_nu_900}The empirical power (in percentages) of our small-uniform $W_n$ statistic along with \cite{cardot2003testing}'s $D_n$ and $T_n$ statistics when $\rho(t) = \rho_0(t) \equiv 0$ and $\varepsilon_i$ has a $\mathcal{N}(0, \sigma_{\varepsilon}^2)$ distribution with $\sigma_{\varepsilon}^2 = \frac{1 - \mathrm{snr}}{\mathrm{snr}} \mathrm{Var}(\inner{X}{\rho})$ with $\mathrm{snr} = 10\%$. Here we assume the truncation parameter $k_n$ is known.  The $n$ here refers to sample size, and $c$ here refers to the exponent associated with the definition of $c_n$. The ``log error'' here refers to the average over all the simulations of the log of the error measure as given in \eqref{eq:ErrorMeasure}. Section~\ref{sec:SimTildeWn} describes our procedure to obtain the simulated levels of $\widetilde{W}_n$. The nominal levels of $D_n$ and $T_n$ are based on their respective asymptotic distributions as described in Section~\ref{sec:ComputeWn}.}
\centering
\resizebox{\linewidth}{!}{
\fontsize{10}{12}\selectfont
\begin{tabular}[t]{cccccccccccccccccccc}
\toprule
\multicolumn{2}{c}{ } & \multicolumn{5}{c}{\textbf{$\boldsymbol{\widetilde{W}_n}$ (simple regularization)}} & \multicolumn{5}{c}{\textbf{$\boldsymbol{\widetilde{W}_n}$ (ridge regularization)}} & \multicolumn{4}{c}{ } & \multicolumn{4}{c}{ } \\
\cmidrule(l{3pt}r{3pt}){3-7} \cmidrule(l{3pt}r{3pt}){8-12}
\multicolumn{3}{c}{ } & \multicolumn{4}{c}{\textbf{Simulated level}} & \multicolumn{1}{c}{ } & \multicolumn{4}{c}{\textbf{Simulated level}} & \multicolumn{4}{c}{\textbf{Nominal level of $\boldsymbol{D_n}$}} & \multicolumn{4}{c}{\textbf{Nominal level of $\boldsymbol{T_n}$}} \\
\cmidrule(l{3pt}r{3pt}){4-7} \cmidrule(l{3pt}r{3pt}){9-12} \cmidrule(l{3pt}r{3pt}){13-16} \cmidrule(l{3pt}r{3pt}){17-20}
$n$ & $k_n$ & log error & 1 & 5 & 10 & 20 & log error & 1 & 5 & 10 & 20 & 1 & 5 & 10 & 20 & 1 & 5 & 10 & 20\\
\midrule
\addlinespace[0.3em]
\multicolumn{20}{l}{\textbf{$c = 3$}}\\
\hspace{1em}50 & 2 & -330.34 & 2.40 & 9.88 & 17.20 & 31.16 & -366.85 & 2.60 & 9.40 & 17.20 & 31.12 & 0.92 & 4.92 & 9.72 & 20.36 & 2.60 & 5.12 & 7.12 & 10.04\\
\hspace{1em}200 & 2 & -313.85 & 0.80 & 3.56 & 6.56 & 13.92 & -348.63 & 0.36 & 2.88 & 6.60 & 14.48 & 1.12 & 4.88 & 9.16 & 19.52 & 3.04 & 4.96 & 6.96 & 9.52\\
\hspace{1em}1000 & 2 & -452.11 & 0.88 & 5.08 & 9.08 & 19.56 & -472.10 & 0.48 & 3.92 & 8.88 & 18.16 & 1.04 & 5.32 & 9.16 & 20.08 & 3.12 & 5.44 & 6.92 & 9.44\\
\addlinespace[0.3em]
\multicolumn{20}{l}{\textbf{$c = 4$}}\\
\hspace{1em}50 & 3 & -124.35 & 2.88 & 9.36 & 15.96 & 27.44 & -224.59 & 2.64 & 8.88 & 15.64 & 27.88 & 1.08 & 5.24 & 10.36 & 19.56 & 2.76 & 5.40 & 7.88 & 10.96\\
\hspace{1em}200 & 3 & -201.98 & 0.60 & 3.32 & 7.28 & 15.00 & -266.06 & 0.68 & 3.72 & 7.04 & 13.68 & 0.88 & 4.72 & 9.24 & 19.16 & 1.88 & 4.76 & 6.40 & 9.56\\
\hspace{1em}1000 & 3 & -302.72 & 0.60 & 3.80 & 7.80 & 15.04 & -341.54 & 0.80 & 4.44 & 9.36 & 17.20 & 0.72 & 4.32 & 9.48 & 18.24 & 2.20 & 4.36 & 6.28 & 9.92\\
\addlinespace[0.3em]
\multicolumn{20}{l}{\textbf{$c = 5$}}\\
\hspace{1em}50 & 4 & 57.02 & 2.88 & 9.12 & 15.88 & 24.76 & -141.90 & 1.80 & 7.00 & 12.24 & 22.32 & 0.76 & 5.12 & 10.44 & 20.16 & 2.64 & 5.04 & 7.24 & 12.80\\
\hspace{1em}200 & 4 & -41.28 & 1.76 & 7.80 & 14.32 & 25.24 & -172.30 & 1.32 & 6.96 & 13.24 & 23.04 & 0.80 & 5.04 & 10.24 & 19.76 & 2.36 & 5.00 & 7.44 & 12.20\\
\hspace{1em}1000 & 5 & -199.48 & 2.04 & 6.84 & 13.52 & 24.56 & -261.46 & 2.00 & 7.60 & 14.20 & 25.00 & 1.24 & 5.12 & 9.80 & 19.12 & 2.36 & 5.00 & 6.84 & 13.64\\
\addlinespace[0.3em]
\multicolumn{20}{l}{\textbf{$c = 7$}}\\
\hspace{1em}50 & 9 & 358.46 & 13.96 & 26.52 & 34.44 & 45.64 & -74.80 & 3.00 & 8.84 & 14.80 & 23.92 & 1.28 & 5.40 & 10.64 & 20.24 & 2.04 & 4.52 & 8.20 & 17.68\\
\hspace{1em}200 & 10 & 256.43 & 11.08 & 24.84 & 36.08 & 51.00 & -73.54 & 4.32 & 13.88 & 21.80 & 33.72 & 1.40 & 5.60 & 11.16 & 21.24 & 2.28 & 5.04 & 8.32 & 19.04\\
\hspace{1em}1000 & 11 & 122.28 & 11.64 & 29.28 & 40.64 & 56.68 & -100.18 & 6.12 & 17.40 & 27.08 & 41.04 & 1.32 & 5.36 & 11.24 & 22.20 & 2.36 & 4.76 & 8.80 & 19.16\\
\addlinespace[0.3em]
\multicolumn{20}{l}{\textbf{$c = 8$}}\\
\hspace{1em}50 & 14 & 501.98 & 19.24 & 32.88 & 42.36 & 54.24 & -56.42 & 2.32 & 8.04 & 14.48 & 24.16 & 1.60 & 6.60 & 12.04 & 22.36 & 2.32 & 5.72 & 10.44 & 20.84\\
\hspace{1em}200 & 16 & 401.06 & 20.04 & 36.96 & 46.76 & 59.60 & -50.26 & 5.16 & 14.00 & 21.36 & 33.16 & 1.88 & 6.36 & 11.92 & 23.44 & 2.52 & 5.56 & 10.00 & 20.28\\
\hspace{1em}1000 & 17 & 264.41 & 22.44 & 40.72 & 51.48 & 64.32 & -65.30 & 6.88 & 16.72 & 25.92 & 39.08 & 1.88 & 7.00 & 13.20 & 23.24 & 2.48 & 6.08 & 11.68 & 21.12\\
\bottomrule
\end{tabular}}
\end{table}

\begin{table}

\caption{\label{tab:psi1_data_based_kn_gaussian_nu_900}The empirical power (in percentages) of our small-uniform $W_n$ statistic when $\rho(t) = \rho_0(t) \equiv 0$ and $\varepsilon_i$ has a $\mathcal{N}(0, \sigma_{\varepsilon}^2)$ distribution with $\sigma_{\varepsilon}^2 = \frac{1 - \mathrm{snr}}{\mathrm{snr}} \mathrm{Var}(\inner{X}{\rho})$ with $\mathrm{snr} = 10\%$. Here we use a data driven truncation parameter $\hat{k}_n$. In particular, ``$\hat{k}_n$ avg'' is the average truncation value over $n_s$ number of simulations, and ``$\hat{k}_n$ std'' is the associated standard error. The $n$ here refers to sample size, and $c$ here refers to the exponent associated with the definition of $c_n$. The ``log error'' here refers to the average over all the simulations of the log of the error measure as given in \eqref{eq:ErrorMeasure}. Section~\ref{sec:SimTildeWn} describes our procedure to obtain the simulated levels of $\widetilde{W}_n$. }
\centering
\fontsize{10}{12}\selectfont
\begin{tabular}[t]{rrrrrrrrrrrrr}
\toprule
\multicolumn{3}{c}{ } & \multicolumn{5}{c}{\textbf{Simulated level of $\boldsymbol{\widetilde{W}_n}$ (simple)}} & \multicolumn{5}{c}{\textbf{Simulated level of $\boldsymbol{\widetilde{W}_n}$ (ridge)}} \\
\cmidrule(l{3pt}r{3pt}){4-8} \cmidrule(l{3pt}r{3pt}){9-13}
$n$ & $\hat{k}_n$ avg & $\hat{k}_n$ std & log error & 1 & 5 & 10 & 20 & log error & 1 & 5 & 10 & 20\\
\midrule
\addlinespace[0.3em]
\multicolumn{13}{l}{\textbf{$c = 3$}}\\
\hspace{1em}50 & 1.48 & 0.61 & -311.28 & 2.28 & 9.88 & 17.20 & 31.16 & -362.82 & 2.64 & 9.40 & 17.20 & 31.08\\
\hspace{1em}200 & 1.57 & 0.50 & -417.98 & 0.88 & 3.60 & 6.72 & 13.92 & -439.45 & 0.36 & 2.88 & 6.60 & 14.48\\
\hspace{1em}1000 & 1.93 & 0.26 & -469.98 & 0.88 & 5.08 & 9.00 & 19.56 & -488.47 & 0.48 & 3.92 & 8.88 & 18.16\\
\addlinespace[0.3em]
\multicolumn{13}{l}{\textbf{$c = 4$}}\\
\hspace{1em}50 & 2.40 & 1.11 & -130.99 & 2.88 & 9.36 & 15.88 & 27.32 & -246.59 & 2.56 & 8.80 & 15.64 & 27.76\\
\hspace{1em}200 & 2.45 & 0.60 & -229.70 & 0.60 & 3.32 & 7.36 & 15.36 & -289.38 & 0.68 & 3.60 & 6.96 & 13.56\\
\hspace{1em}1000 & 2.63 & 0.48 & -357.13 & 0.60 & 3.84 & 7.84 & 15.04 & -386.91 & 0.76 & 4.28 & 9.08 & 16.88\\
\addlinespace[0.3em]
\multicolumn{13}{l}{\textbf{$c = 5$}}\\
\hspace{1em}50 & 3.89 & 2.25 & 29.26 & 2.76 & 8.92 & 15.48 & 24.32 & -167.34 & 1.72 & 6.64 & 11.84 & 21.60\\
\hspace{1em}200 & 3.83 & 1.04 & -70.54 & 1.72 & 7.80 & 14.40 & 25.44 & -189.46 & 1.28 & 6.80 & 12.68 & 22.04\\
\hspace{1em}1000 & 4.03 & 0.52 & -203.37 & 2.12 & 7.00 & 13.80 & 24.84 & -264.43 & 2.00 & 7.72 & 14.32 & 25.12\\
\addlinespace[0.3em]
\multicolumn{13}{l}{\textbf{$c = 7$}}\\
\hspace{1em}50 & 10.68 & 8.11 & 336.74 & 13.80 & 26.40 & 34.12 & 45.28 & -92.96 & 3.00 & 8.88 & 14.92 & 24.40\\
\hspace{1em}200 & 9.66 & 3.64 & 226.90 & 11.08 & 25.12 & 36.68 & 52.12 & -85.96 & 4.32 & 14.20 & 22.48 & 35.04\\
\hspace{1em}1000 & 9.76 & 1.60 & 87.98 & 11.68 & 29.28 & 40.52 & 56.60 & -112.98 & 6.36 & 17.72 & 27.32 & 41.40\\
\addlinespace[0.3em]
\multicolumn{13}{l}{\textbf{$c = 8$}}\\
\hspace{1em}50 & 16.05 & 11.21 & 472.39 & 19.12 & 32.80 & 42.00 & 53.84 & -69.30 & 2.20 & 7.84 & 14.28 & 23.76\\
\hspace{1em}200 & 15.39 & 6.86 & 367.03 & 20.48 & 37.32 & 47.24 & 59.76 & -59.13 & 5.16 & 13.88 & 21.20 & 32.72\\
\hspace{1em}1000 & 15.15 & 2.85 & 225.53 & 22.56 & 40.92 & 51.60 & 64.92 & -74.06 & 6.80 & 16.64 & 25.44 & 38.60\\
\bottomrule
\end{tabular}
\end{table}

\begin{table}

\caption{\label{tab:psi2_oracle_kn_gaussian_nu_1900}The empirical power (in percentages) of our small-uniform $W_n$ statistic along with \cite{cardot2003testing}'s $D_n$ and $T_n$ statistics when $\rho(t) = \rho_1(t) = \sin(\pi t / 2) + \frac{1}{2} \sin(3 \pi t / 2) + \frac{1}{4} \sin(5 \pi  t / 2)$ and $\varepsilon_i$ has a $\mathcal{N}(0, \sigma_{\varepsilon}^2)$ distribution with $\sigma_{\varepsilon}^2 = \frac{1 - \mathrm{snr}}{\mathrm{snr}} \mathrm{Var}(\inner{X}{\rho})$ with $\mathrm{snr} = 5\%$. Here we assume the truncation parameter $k_n$ is known.  The $n$ here refers to sample size, and $c$ here refers to the exponent associated with the definition of $c_n$. The ``log error'' here refers to the average over all the simulations of the log of the error measure as given in \eqref{eq:ErrorMeasure}. Section~\ref{sec:SimTildeWn} describes our procedure to obtain the simulated levels of $\widetilde{W}_n$. The nominal levels of $D_n$ and $T_n$ are based on their respective asymptotic distributions as described in Section~\ref{sec:ComputeWn}.}
\centering
\resizebox{\linewidth}{!}{
\fontsize{10}{12}\selectfont
\begin{tabular}[t]{cccccccccccccccccccc}
\toprule
\multicolumn{2}{c}{ } & \multicolumn{5}{c}{\textbf{$\boldsymbol{\widetilde{W}_n}$ (simple regularization)}} & \multicolumn{5}{c}{\textbf{$\boldsymbol{\widetilde{W}_n}$ (ridge regularization)}} & \multicolumn{4}{c}{ } & \multicolumn{4}{c}{ } \\
\cmidrule(l{3pt}r{3pt}){3-7} \cmidrule(l{3pt}r{3pt}){8-12}
\multicolumn{3}{c}{ } & \multicolumn{4}{c}{\textbf{Simulated level}} & \multicolumn{1}{c}{ } & \multicolumn{4}{c}{\textbf{Simulated level}} & \multicolumn{4}{c}{\textbf{Nominal level of $\boldsymbol{D_n}$}} & \multicolumn{4}{c}{\textbf{Nominal level of $\boldsymbol{T_n}$}} \\
\cmidrule(l{3pt}r{3pt}){4-7} \cmidrule(l{3pt}r{3pt}){9-12} \cmidrule(l{3pt}r{3pt}){13-16} \cmidrule(l{3pt}r{3pt}){17-20}
$n$ & $k_n$ & log error & 1 & 5 & 10 & 20 & log error & 1 & 5 & 10 & 20 & 1 & 5 & 10 & 20 & 1 & 5 & 10 & 20\\
\midrule
\addlinespace[0.3em]
\multicolumn{20}{l}{\textbf{$c = 3$}}\\
\hspace{1em}50 & 2 & -43.90 & 20.16 & 41.92 & 55.28 & 69.20 & -70.48 & 20.36 & 40.76 & 54.28 & 69.00 & 11.60 & 28.44 & 40.72 & 55.84 & 21.00 & 28.80 & 33.92 & 41.24\\
\hspace{1em}200 & 2 & -88.55 & 62.76 & 83.32 & 89.88 & 94.92 & -117.76 & 63.96 & 83.04 & 89.88 & 94.84 & 65.68 & 85.64 & 91.44 & 95.72 & 79.12 & 85.84 & 88.68 & 91.56\\
\hspace{1em}1000 & 2 & -196.76 & 100.00 & 100.00 & 100.00 & 100.00 & -206.30 & 100.00 & 100.00 & 100.00 & 100.00 & 100.00 & 100.00 & 100.00 & 100.00 & 100.00 & 100.00 & 100.00 & 100.00\\
\addlinespace[0.3em]
\multicolumn{20}{l}{\textbf{$c = 4$}}\\
\hspace{1em}50 & 3 & 56.50 & 13.88 & 33.04 & 45.28 & 59.52 & -31.49 & 17.00 & 35.76 & 48.36 & 64.28 & 9.48 & 24.16 & 35.88 & 50.96 & 16.80 & 24.32 & 29.44 & 36.96\\
\hspace{1em}200 & 3 & -10.33 & 57.80 & 78.36 & 85.92 & 92.84 & -68.13 & 58.56 & 77.72 & 86.80 & 92.76 & 59.20 & 79.28 & 86.44 & 93.56 & 71.76 & 79.36 & 83.04 & 86.84\\
\hspace{1em}1000 & 3 & -117.00 & 100.00 & 100.00 & 100.00 & 100.00 & -148.56 & 100.00 & 100.00 & 100.00 & 100.00 & 100.00 & 100.00 & 100.00 & 100.00 & 100.00 & 100.00 & 100.00 & 100.00\\
\addlinespace[0.3em]
\multicolumn{20}{l}{\textbf{$c = 5$}}\\
\hspace{1em}50 & 4 & 238.38 & 11.04 & 28.80 & 39.52 & 53.56 & 43.23 & 9.92 & 25.80 & 37.20 & 51.00 & 7.72 & 22.72 & 33.68 & 48.92 & 14.16 & 22.36 & 27.84 & 35.36\\
\hspace{1em}200 & 4 & 137.54 & 58.84 & 78.12 & 85.60 & 91.80 & 17.25 & 59.20 & 79.60 & 86.84 & 92.96 & 52.40 & 73.40 & 82.08 & 89.56 & 63.76 & 73.04 & 77.64 & 82.80\\
\hspace{1em}1000 & 5 & -8.25 & 100.00 & 100.00 & 100.00 & 100.00 & -64.56 & 100.00 & 100.00 & 100.00 & 100.00 & 100.00 & 100.00 & 100.00 & 100.00 & 100.00 & 100.00 & 100.00 & 100.00\\
\addlinespace[0.3em]
\multicolumn{20}{l}{\textbf{$c = 7$}}\\
\hspace{1em}50 & 9 & 539.84 & 27.32 & 43.40 & 53.80 & 65.88 & 109.07 & 11.56 & 25.44 & 34.96 & 47.92 & 6.24 & 17.24 & 27.64 & 40.68 & 9.36 & 16.00 & 20.92 & 31.28\\
\hspace{1em}200 & 10 & 450.40 & 70.16 & 84.84 & 90.32 & 94.84 & 114.77 & 57.84 & 76.20 & 83.88 & 91.28 & 40.28 & 61.92 & 72.64 & 83.20 & 47.28 & 59.96 & 66.32 & 73.88\\
\hspace{1em}1000 & 11 & 376.49 & 100.00 & 100.00 & 100.00 & 100.00 & 133.72 & 100.00 & 100.00 & 100.00 & 100.00 & 99.96 & 100.00 & 100.00 & 100.00 & 100.00 & 100.00 & 100.00 & 100.00\\
\addlinespace[0.3em]
\multicolumn{20}{l}{\textbf{$c = 8$}}\\
\hspace{1em}50 & 14 & 692.65 & 38.80 & 53.52 & 61.60 & 72.08 & 125.39 & 10.32 & 22.64 & 32.56 & 44.20 & 8.36 & 20.72 & 29.04 & 43.20 & 11.08 & 18.68 & 23.92 & 33.48\\
\hspace{1em}200 & 16 & 617.61 & 81.88 & 91.04 & 94.24 & 96.76 & 141.20 & 60.28 & 77.24 & 85.64 & 91.20 & 45.84 & 66.40 & 75.80 & 84.52 & 51.08 & 63.80 & 69.84 & 76.88\\
\hspace{1em}1000 & 17 & 567.22 & 100.00 & 100.00 & 100.00 & 100.00 & 183.44 & 100.00 & 100.00 & 100.00 & 100.00 & 99.96 & 100.00 & 100.00 & 100.00 & 100.00 & 100.00 & 100.00 & 100.00\\
\bottomrule
\end{tabular}}
\end{table}

\begin{table}

\caption{\label{tab:psi2_oracle_kn_gaussian_nu_900}The empirical power (in percentages) of our small-uniform $W_n$ statistic along with \cite{cardot2003testing}'s $D_n$ and $T_n$ statistics when $\rho(t) = \rho_1(t) = \sin(\pi t / 2) + \frac{1}{2} \sin(3 \pi t / 2) + \frac{1}{4} \sin(5 \pi  t / 2)$ and $\varepsilon_i$ has a $\mathcal{N}(0, \sigma_{\varepsilon}^2)$ distribution with $\sigma_{\varepsilon}^2 = \frac{1 - \mathrm{snr}}{\mathrm{snr}} \mathrm{Var}(\inner{X}{\rho})$ with $\mathrm{snr} = 10\%$. Here we assume the truncation parameter $k_n$ is known.  The $n$ here refers to sample size, and $c$ here refers to the exponent associated with the definition of $c_n$. The ``log error'' here refers to the average over all the simulations of the log of the error measure as given in \eqref{eq:ErrorMeasure}. Section~\ref{sec:SimTildeWn} describes our procedure to obtain the simulated levels of $\widetilde{W}_n$. The nominal levels of $D_n$ and $T_n$ are based on their respective asymptotic distributions as described in Section~\ref{sec:ComputeWn}.}
\centering
\resizebox{\linewidth}{!}{
\fontsize{10}{12}\selectfont
\begin{tabular}[t]{cccccccccccccccccccc}
\toprule
\multicolumn{2}{c}{ } & \multicolumn{5}{c}{\textbf{$\boldsymbol{\widetilde{W}_n}$ (simple regularization)}} & \multicolumn{5}{c}{\textbf{$\boldsymbol{\widetilde{W}_n}$ (ridge regularization)}} & \multicolumn{4}{c}{ } & \multicolumn{4}{c}{ } \\
\cmidrule(l{3pt}r{3pt}){3-7} \cmidrule(l{3pt}r{3pt}){8-12}
\multicolumn{3}{c}{ } & \multicolumn{4}{c}{\textbf{Simulated level}} & \multicolumn{1}{c}{ } & \multicolumn{4}{c}{\textbf{Simulated level}} & \multicolumn{4}{c}{\textbf{Nominal level of $\boldsymbol{D_n}$}} & \multicolumn{4}{c}{\textbf{Nominal level of $\boldsymbol{T_n}$}} \\
\cmidrule(l{3pt}r{3pt}){4-7} \cmidrule(l{3pt}r{3pt}){9-12} \cmidrule(l{3pt}r{3pt}){13-16} \cmidrule(l{3pt}r{3pt}){17-20}
$n$ & $k_n$ & log error & 1 & 5 & 10 & 20 & log error & 1 & 5 & 10 & 20 & 1 & 5 & 10 & 20 & 1 & 5 & 10 & 20\\
\midrule
\addlinespace[0.3em]
\multicolumn{20}{l}{\textbf{$c = 3$}}\\
\hspace{1em}50 & 2 & -84.04 & 45.64 & 69.24 & 79.76 & 88.36 & -111.81 & 45.04 & 69.32 & 79.80 & 88.48 & 31.16 & 55.08 & 66.00 & 78.96 & 46.00 & 55.60 & 60.44 & 66.40\\
\hspace{1em}200 & 2 & -138.32 & 96.48 & 99.12 & 99.48 & 99.72 & -169.19 & 96.60 & 99.36 & 99.56 & 99.88 & 96.84 & 99.20 & 99.48 & 99.72 & 98.80 & 99.24 & 99.36 & 99.48\\
\hspace{1em}1000 & 2 & -234.36 & 100.00 & 100.00 & 100.00 & 100.00 & -245.04 & 100.00 & 100.00 & 100.00 & 100.00 & 100.00 & 100.00 & 100.00 & 100.00 & 100.00 & 100.00 & 100.00 & 100.00\\
\addlinespace[0.3em]
\multicolumn{20}{l}{\textbf{$c = 4$}}\\
\hspace{1em}50 & 3 & -8.32 & 35.12 & 59.00 & 70.60 & 82.40 & -89.08 & 34.04 & 58.68 & 71.08 & 82.24 & 26.80 & 49.28 & 61.80 & 75.32 & 38.76 & 49.40 & 55.52 & 62.56\\
\hspace{1em}200 & 3 & -70.53 & 94.84 & 98.84 & 99.40 & 99.84 & -134.69 & 94.68 & 98.76 & 99.32 & 99.80 & 94.92 & 98.56 & 99.52 & 99.84 & 97.44 & 98.56 & 99.08 & 99.52\\
\hspace{1em}1000 & 3 & -175.07 & 100.00 & 100.00 & 100.00 & 100.00 & -209.87 & 100.00 & 100.00 & 100.00 & 100.00 & 100.00 & 100.00 & 100.00 & 100.00 & 100.00 & 100.00 & 100.00 & 100.00\\
\addlinespace[0.3em]
\multicolumn{20}{l}{\textbf{$c = 5$}}\\
\hspace{1em}50 & 4 & 162.68 & 29.00 & 51.52 & 64.64 & 77.20 & -26.20 & 24.76 & 45.36 & 57.20 & 71.80 & 22.28 & 44.08 & 56.40 & 71.04 & 32.28 & 43.56 & 50.52 & 57.48\\
\hspace{1em}200 & 4 & 70.80 & 95.08 & 98.68 & 99.44 & 99.80 & -53.39 & 93.72 & 98.28 & 99.16 & 99.80 & 92.96 & 97.88 & 99.12 & 99.60 & 96.16 & 97.88 & 98.36 & 99.20\\
\hspace{1em}1000 & 5 & -65.50 & 100.00 & 100.00 & 100.00 & 100.00 & -121.91 & 100.00 & 100.00 & 100.00 & 100.00 & 100.00 & 100.00 & 100.00 & 100.00 & 100.00 & 100.00 & 100.00 & 100.00\\
\addlinespace[0.3em]
\multicolumn{20}{l}{\textbf{$c = 7$}}\\
\hspace{1em}50 & 9 & 469.73 & 43.40 & 62.52 & 72.40 & 81.20 & 40.43 & 23.88 & 44.60 & 55.88 & 69.36 & 17.16 & 35.60 & 46.48 & 59.88 & 22.16 & 33.76 & 40.24 & 48.32\\
\hspace{1em}200 & 10 & 394.15 & 96.32 & 99.12 & 99.60 & 99.76 & 53.64 & 93.68 & 97.68 & 98.72 & 99.56 & 84.20 & 94.48 & 96.68 & 98.80 & 88.20 & 93.44 & 95.64 & 96.92\\
\hspace{1em}1000 & 11 & 353.78 & 100.00 & 100.00 & 100.00 & 100.00 & 95.27 & 100.00 & 100.00 & 100.00 & 100.00 & 100.00 & 100.00 & 100.00 & 100.00 & 100.00 & 100.00 & 100.00 & 100.00\\
\addlinespace[0.3em]
\multicolumn{20}{l}{\textbf{$c = 8$}}\\
\hspace{1em}50 & 14 & 624.19 & 54.16 & 68.96 & 76.20 & 84.00 & 55.43 & 23.52 & 42.60 & 53.24 & 64.68 & 19.44 & 35.40 & 46.00 & 59.76 & 23.16 & 33.00 & 39.32 & 48.00\\
\hspace{1em}200 & 16 & 568.11 & 98.12 & 99.20 & 99.84 & 99.96 & 79.12 & 93.20 & 97.40 & 98.64 & 99.60 & 85.36 & 92.88 & 95.84 & 97.88 & 87.48 & 91.84 & 93.72 & 96.12\\
\hspace{1em}1000 & 17 & 553.00 & 100.00 & 100.00 & 100.00 & 100.00 & 154.03 & 100.00 & 100.00 & 100.00 & 100.00 & 100.00 & 100.00 & 100.00 & 100.00 & 100.00 & 100.00 & 100.00 & 100.00\\
\bottomrule
\end{tabular}}
\end{table}

\begin{table}

\caption{\label{tab:psi3_oracle_kn_gaussian_nu_1900}The empirical power (in percentages) of our small-uniform $W_n$ statistic along with \cite{cardot2003testing}'s $D_n$ and $T_n$ statistics when $\rho(t) = \rho_2(t) = \sin(2 \pi t^3)^3$ and $\varepsilon_i$ has a $\mathcal{N}(0, \sigma_{\varepsilon}^2)$ distribution with $\sigma_{\varepsilon}^2 = \frac{1 - \mathrm{snr}}{\mathrm{snr}} \mathrm{Var}(\inner{X}{\rho})$ with $\mathrm{snr} = 5\%$. Here we assume the truncation parameter $k_n$ is known.  The $n$ here refers to sample size, and $c$ here refers to the exponent associated with the definition of $c_n$. The ``log error'' here refers to the average over all the simulations of the log of the error measure as given in \eqref{eq:ErrorMeasure}. Section~\ref{sec:SimTildeWn} describes our procedure to obtain the simulated levels of $\widetilde{W}_n$. The nominal levels of $D_n$ and $T_n$ are based on their respective asymptotic distributions as described in Section~\ref{sec:ComputeWn}.}
\centering
\resizebox{\linewidth}{!}{
\fontsize{10}{12}\selectfont
\begin{tabular}[t]{cccccccccccccccccccc}
\toprule
\multicolumn{2}{c}{ } & \multicolumn{5}{c}{\textbf{$\boldsymbol{\widetilde{W}_n}$ (simple regularization)}} & \multicolumn{5}{c}{\textbf{$\boldsymbol{\widetilde{W}_n}$ (ridge regularization)}} & \multicolumn{4}{c}{ } & \multicolumn{4}{c}{ } \\
\cmidrule(l{3pt}r{3pt}){3-7} \cmidrule(l{3pt}r{3pt}){8-12}
\multicolumn{3}{c}{ } & \multicolumn{4}{c}{\textbf{Simulated level}} & \multicolumn{1}{c}{ } & \multicolumn{4}{c}{\textbf{Simulated level}} & \multicolumn{4}{c}{\textbf{Nominal level of $\boldsymbol{D_n}$}} & \multicolumn{4}{c}{\textbf{Nominal level of $\boldsymbol{T_n}$}} \\
\cmidrule(l{3pt}r{3pt}){4-7} \cmidrule(l{3pt}r{3pt}){9-12} \cmidrule(l{3pt}r{3pt}){13-16} \cmidrule(l{3pt}r{3pt}){17-20}
$n$ & $k_n$ & log error & 1 & 5 & 10 & 20 & log error & 1 & 5 & 10 & 20 & 1 & 5 & 10 & 20 & 1 & 5 & 10 & 20\\
\midrule
\addlinespace[0.3em]
\multicolumn{20}{l}{\textbf{$c = 3$}}\\
\hspace{1em}50 & 2 & 0.08 & 12.84 & 31.64 & 44.00 & 59.76 & -4.85 & 13.36 & 31.40 & 43.64 & 58.56 & 9.08 & 22.64 & 32.12 & 47.84 & 15.92 & 23.20 & 27.56 & 32.24\\
\hspace{1em}200 & 2 & -13.51 & 41.60 & 62.60 & 73.68 & 82.80 & -16.30 & 39.52 & 62.20 & 73.20 & 82.64 & 50.84 & 71.72 & 80.92 & 88.72 & 64.40 & 72.32 & 76.72 & 81.08\\
\hspace{1em}1000 & 2 & -21.50 & 99.88 & 100.00 & 100.00 & 100.00 & -21.66 & 99.92 & 100.00 & 100.00 & 100.00 & 99.92 & 100.00 & 100.00 & 100.00 & 100.00 & 100.00 & 100.00 & 100.00\\
\addlinespace[0.3em]
\multicolumn{20}{l}{\textbf{$c = 4$}}\\
\hspace{1em}50 & 3 & 4.12 & 12.68 & 29.00 & 41.00 & 56.80 & -12.96 & 12.48 & 29.84 & 40.64 & 55.00 & 10.28 & 24.76 & 35.56 & 50.60 & 17.20 & 24.84 & 30.00 & 36.40\\
\hspace{1em}200 & 3 & -43.32 & 48.80 & 69.60 & 79.12 & 87.24 & -50.12 & 48.56 & 68.84 & 79.36 & 87.56 & 57.84 & 77.96 & 85.88 & 92.64 & 69.32 & 78.08 & 81.92 & 86.76\\
\hspace{1em}1000 & 3 & -100.84 & 100.00 & 100.00 & 100.00 & 100.00 & -102.20 & 99.92 & 100.00 & 100.00 & 100.00 & 100.00 & 100.00 & 100.00 & 100.00 & 100.00 & 100.00 & 100.00 & 100.00\\
\addlinespace[0.3em]
\multicolumn{20}{l}{\textbf{$c = 5$}}\\
\hspace{1em}50 & 4 & 53.53 & 11.64 & 27.48 & 37.24 & 51.76 & -32.05 & 9.92 & 23.56 & 33.96 & 47.52 & 9.00 & 23.52 & 34.08 & 49.08 & 15.24 & 23.16 & 28.32 & 35.48\\
\hspace{1em}200 & 4 & -27.28 & 63.00 & 81.44 & 88.04 & 93.20 & -72.30 & 58.76 & 76.92 & 85.16 & 91.80 & 55.52 & 76.36 & 84.48 & 91.16 & 67.28 & 76.04 & 80.72 & 85.00\\
\hspace{1em}1000 & 5 & -107.67 & 99.96 & 100.00 & 100.00 & 100.00 & -118.97 & 100.00 & 100.00 & 100.00 & 100.00 & 99.96 & 100.00 & 100.00 & 100.00 & 99.96 & 100.00 & 100.00 & 100.00\\
\addlinespace[0.3em]
\multicolumn{20}{l}{\textbf{$c = 7$}}\\
\hspace{1em}50 & 9 & 319.01 & 23.00 & 40.76 & 51.08 & 62.52 & -18.76 & 10.40 & 25.16 & 33.72 & 46.68 & 5.20 & 17.08 & 26.80 & 40.68 & 7.64 & 14.96 & 21.00 & 30.08\\
\hspace{1em}200 & 10 & 223.07 & 68.04 & 84.44 & 90.40 & 95.64 & -44.75 & 60.36 & 77.80 & 84.76 & 91.04 & 38.60 & 60.72 & 71.80 & 82.40 & 46.12 & 59.24 & 65.84 & 73.16\\
\hspace{1em}1000 & 11 & 111.44 & 100.00 & 100.00 & 100.00 & 100.00 & -66.55 & 100.00 & 100.00 & 100.00 & 100.00 & 99.92 & 100.00 & 100.00 & 100.00 & 99.96 & 100.00 & 100.00 & 100.00\\
\addlinespace[0.3em]
\multicolumn{20}{l}{\textbf{$c = 8$}}\\
\hspace{1em}50 & 14 & 466.78 & 31.60 & 48.80 & 58.12 & 68.44 & -11.74 & 10.20 & 24.76 & 34.32 & 46.60 & 5.12 & 16.04 & 26.00 & 39.32 & 7.24 & 14.52 & 20.44 & 30.12\\
\hspace{1em}200 & 16 & 374.11 & 73.68 & 87.40 & 92.04 & 96.24 & -28.19 & 56.96 & 76.12 & 84.08 & 89.92 & 34.28 & 56.00 & 66.80 & 78.60 & 39.40 & 52.28 & 59.56 & 68.28\\
\hspace{1em}1000 & 17 & 276.11 & 100.00 & 100.00 & 100.00 & 100.00 & -35.84 & 100.00 & 100.00 & 100.00 & 100.00 & 99.72 & 100.00 & 100.00 & 100.00 & 99.84 & 100.00 & 100.00 & 100.00\\
\bottomrule
\end{tabular}}
\end{table}

\begin{table}

\caption{\label{tab:psi3_oracle_kn_gaussian_nu_900}The empirical power (in percentages) of our small-uniform $W_n$ statistic along with \cite{cardot2003testing}'s $D_n$ and $T_n$ statistics when $\rho(t) = \rho_2(t) = \sin(2 \pi t^3)^3$ and $\varepsilon_i$ has a $\mathcal{N}(0, \sigma_{\varepsilon}^2)$ distribution with $\sigma_{\varepsilon}^2 = \frac{1 - \mathrm{snr}}{\mathrm{snr}} \mathrm{Var}(\inner{X}{\rho})$ with $\mathrm{snr} = 10\%$. Here we assume the truncation parameter $k_n$ is known.  The $n$ here refers to sample size, and $c$ here refers to the exponent associated with the definition of $c_n$. The ``log error'' here refers to the average over all the simulations of the log of the error measure as given in \eqref{eq:ErrorMeasure}. Section~\ref{sec:SimTildeWn} describes our procedure to obtain the simulated levels of $\widetilde{W}_n$. The nominal levels of $D_n$ and $T_n$ are based on their respective asymptotic distributions as described in Section~\ref{sec:ComputeWn}.}
\centering
\resizebox{\linewidth}{!}{
\fontsize{10}{12}\selectfont
\begin{tabular}[t]{cccccccccccccccccccc}
\toprule
\multicolumn{2}{c}{ } & \multicolumn{5}{c}{\textbf{$\boldsymbol{\widetilde{W}_n}$ (simple regularization)}} & \multicolumn{5}{c}{\textbf{$\boldsymbol{\widetilde{W}_n}$ (ridge regularization)}} & \multicolumn{4}{c}{ } & \multicolumn{4}{c}{ } \\
\cmidrule(l{3pt}r{3pt}){3-7} \cmidrule(l{3pt}r{3pt}){8-12}
\multicolumn{3}{c}{ } & \multicolumn{4}{c}{\textbf{Simulated level}} & \multicolumn{1}{c}{ } & \multicolumn{4}{c}{\textbf{Simulated level}} & \multicolumn{4}{c}{\textbf{Nominal level of $\boldsymbol{D_n}$}} & \multicolumn{4}{c}{\textbf{Nominal level of $\boldsymbol{T_n}$}} \\
\cmidrule(l{3pt}r{3pt}){4-7} \cmidrule(l{3pt}r{3pt}){9-12} \cmidrule(l{3pt}r{3pt}){13-16} \cmidrule(l{3pt}r{3pt}){17-20}
$n$ & $k_n$ & log error & 1 & 5 & 10 & 20 & log error & 1 & 5 & 10 & 20 & 1 & 5 & 10 & 20 & 1 & 5 & 10 & 20\\
\midrule
\addlinespace[0.3em]
\multicolumn{20}{l}{\textbf{$c = 3$}}\\
\hspace{1em}50 & 2 & -6.12 & 28.44 & 49.00 & 61.36 & 74.48 & -8.02 & 29.52 & 53.04 & 64.12 & 76.36 & 22.36 & 42.60 & 53.56 & 67.60 & 34.28 & 42.88 & 47.08 & 54.08\\
\hspace{1em}200 & 2 & -18.25 & 80.48 & 92.08 & 95.72 & 98.40 & -18.98 & 80.20 & 92.28 & 95.44 & 97.96 & 87.72 & 96.40 & 97.68 & 99.32 & 93.96 & 96.44 & 97.24 & 97.80\\
\hspace{1em}1000 & 2 & -22.50 & 100.00 & 100.00 & 100.00 & 100.00 & -22.64 & 100.00 & 100.00 & 100.00 & 100.00 & 100.00 & 100.00 & 100.00 & 100.00 & 100.00 & 100.00 & 100.00 & 100.00\\
\addlinespace[0.3em]
\multicolumn{20}{l}{\textbf{$c = 4$}}\\
\hspace{1em}50 & 3 & -12.89 & 28.44 & 49.48 & 61.04 & 74.44 & -20.69 & 29.00 & 49.08 & 61.24 & 73.96 & 25.64 & 46.32 & 59.08 & 71.52 & 36.76 & 46.44 & 52.40 & 60.04\\
\hspace{1em}200 & 3 & -59.21 & 89.80 & 96.36 & 98.08 & 99.16 & -58.12 & 88.16 & 95.92 & 97.72 & 99.04 & 94.16 & 98.36 & 99.16 & 99.72 & 97.08 & 98.36 & 98.80 & 99.20\\
\hspace{1em}1000 & 3 & -108.69 & 100.00 & 100.00 & 100.00 & 100.00 & -106.68 & 100.00 & 100.00 & 100.00 & 100.00 & 100.00 & 100.00 & 100.00 & 100.00 & 100.00 & 100.00 & 100.00 & 100.00\\
\addlinespace[0.3em]
\multicolumn{20}{l}{\textbf{$c = 5$}}\\
\hspace{1em}50 & 4 & 1.66 & 29.60 & 49.80 & 61.44 & 72.20 & -45.34 & 25.04 & 45.36 & 56.84 & 69.44 & 24.64 & 45.80 & 58.64 & 71.68 & 33.64 & 45.32 & 52.32 & 60.08\\
\hspace{1em}200 & 4 & -70.58 & 94.92 & 98.36 & 99.28 & 99.80 & -89.75 & 93.36 & 97.76 & 98.96 & 99.64 & 93.20 & 97.56 & 98.76 & 99.64 & 95.80 & 97.44 & 98.32 & 98.88\\
\hspace{1em}1000 & 5 & -128.01 & 100.00 & 100.00 & 100.00 & 100.00 & -130.83 & 100.00 & 100.00 & 100.00 & 100.00 & 100.00 & 100.00 & 100.00 & 100.00 & 100.00 & 100.00 & 100.00 & 100.00\\
\addlinespace[0.3em]
\multicolumn{20}{l}{\textbf{$c = 7$}}\\
\hspace{1em}50 & 9 & 249.77 & 40.64 & 61.48 & 71.20 & 81.08 & -40.74 & 26.52 & 46.76 & 57.16 & 70.00 & 16.52 & 34.60 & 46.44 & 61.48 & 21.84 & 32.84 & 39.40 & 48.48\\
\hspace{1em}200 & 10 & 157.24 & 94.96 & 98.36 & 99.32 & 99.68 & -74.06 & 93.76 & 98.08 & 99.00 & 99.56 & 83.00 & 94.04 & 96.64 & 98.44 & 87.64 & 93.56 & 95.12 & 96.88\\
\hspace{1em}1000 & 11 & 61.37 & 100.00 & 100.00 & 100.00 & 100.00 & -95.50 & 100.00 & 100.00 & 100.00 & 100.00 & 100.00 & 100.00 & 100.00 & 100.00 & 100.00 & 100.00 & 100.00 & 100.00\\
\addlinespace[0.3em]
\multicolumn{20}{l}{\textbf{$c = 8$}}\\
\hspace{1em}50 & 14 & 395.72 & 47.16 & 64.40 & 72.40 & 81.64 & -36.90 & 25.20 & 44.28 & 54.96 & 67.56 & 13.24 & 31.32 & 41.84 & 56.20 & 17.48 & 28.72 & 35.40 & 44.64\\
\hspace{1em}200 & 16 & 307.97 & 96.08 & 98.68 & 99.24 & 99.76 & -62.10 & 92.12 & 97.24 & 98.48 & 99.36 & 76.64 & 90.12 & 94.12 & 97.16 & 80.64 & 88.60 & 91.68 & 94.52\\
\hspace{1em}1000 & 17 & 236.78 & 100.00 & 100.00 & 100.00 & 100.00 & -65.24 & 100.00 & 100.00 & 100.00 & 100.00 & 100.00 & 100.00 & 100.00 & 100.00 & 100.00 & 100.00 & 100.00 & 100.00\\
\bottomrule
\end{tabular}}
\end{table}

\end{landscape}

\section{Concluding remarks}
\label{sec:Conclusion}
This paper introduces a small-uniform statistic $W_n$ that is constructed as a fractional programming problem out of the FPCA estimator $\hat{\rho}$ of the slope $\rho$ of the functional linear model. Our main result Theorem~\ref{thm:NormalizedLeungTamStatisticConvergence} shows $W_n$ converges in probability to the supremum of a Gaussian process $\widetilde{W}_n$. The key arguments to showing our main result are by taking advantage of identifying the regressors' underlying Hilbert space with its dual, and also recent advances by \cite{chernozhukov2014gaussian} in studying the suprema of empirical processes indexed by functionals. 

We see two interesting directions in extending the small-uniform statistic.  Firstly, while this paper focuses on the most commonly studied scalar-on-functional FLM, it seems feasible to extend our statistic to a functional-on-functional FLM. Secondly, the recent and growing literature on \emph{functional time series regressions}
\footnote{
	For example, see \cite{panaretos2013fourier} and \cite{hormann2015dynamic}.
}
represent a more exciting challenge of extending our small-uniform statistic. In particular, it is clear one needs a modification of Step I in our proof of Theorem~\ref{thm:NormalizedLeungTamStatisticConvergence} to a functional time series context. More importantly, we conjecture the required extension of our Step II will call for new results in studying the empirical processes constructed out of dependent random variables.

\newpage
\appendix 
\appendixpage 
\addappheadtotoc

\section{Proofs} 
Throughout the proofs, we will use the following asymptotic approximation notations. We will always use $C, c > 0$ to denote universal constants that may change between lines. For $x, y > 0$, we denote $x \asymp y$ to mean $c y \le x \le C y$. For a real sequence $\{ x_n \}$, we will write $x_n \lesssim \BigOh(a_n)$ to mean there exists some sequence $\{y_n\}$ such that $\abs{x_n} \le C \abs{y_n}$, and $\abs{y_n} \le c \abs{a_n}$. Likewise, if $\{ X_n \}$ is a sequence of random variables and $\{ a_n \}$ is a deterministic sequence, we will write $X_n \lesssim \BigOhPee(a_n)$ to mean there exists some sequence of random variables $\{ Y_n \}$ such that $\abs{X_n} \le C \abs{Y_n}$ with $Y_n = \BigOhPee(a_n)$; i.e.\ for all $\epsilon > 0$ there exists $C > 0$ such that $\Prob(\abs{Y_n / a_n} \le C ) \ge 1 - \epsilon$ for all $n$. We will also use $\norm{\cdot}$ to denote the operator norm; that is, for a bounded operator $A \in \mathscr{B}(\HSpace, \HSpace) =: \BddOp$, we denote $\norm{A} := \sup_{\norm{h} \le 1} \norm{A h}$. We will denote the space of compact operators on $\HSpace$ as $\CompactOp$. 

\begin{rem}[Our proof arguments vis-\'{a}-vis that of \cite{cardot2007clt}]
	Our proof arguments of Step I are heavily inspired by \cite{cardot2007clt}. Indeed, our proofs of Propositions~\ref{prop:TermYnGoesToZero} and \ref{prop:TermSnGoesToZero} are heavily based on the arguments of \cite[Propositions 2 and 3]{cardot2007clt}. But we also have two significant deviations. Firstly, a critical difference is that we neither define their set $\mathcal{E}_j(z)$ nor use their Lemma 4. In particular, we can't understand one their key arguments (last displayed equation on their pg 351) which seemingly requires the expression ``$\sup_{z \in \mathcal{B}_j} \mathcal{E}_j(z)$'', of which measureability concerns arise. Instead of pursing this argument direction, we simply recognize that the norm $\norm{(z - \Gamma_n)^{-1}}$ is bounded above by the reciprocal of the distance from $z \in \mathcal{B}_j \subseteq \rho(\Gamma_n)$ to its spectrum $\sigma(\Gamma_n)$. And thanks to the choice of the contours $\mathcal{C}_n$ and the event $\mathcal{A}_n$ from Lemma~\ref{lem:ApproxIntegrationEmpiricalCn}, this reciprocal can be approximated by the reciprocal of the radius of $\mathcal{B}_j$; see \eqref{pfeq:RoseResolventNormEventAn}. This argument allows us to estimate $\norm{(z - \Gamma_n)^{-1}}$ without the need to deal with potential measureability issues associated with the event $\mathcal{E}_j(z)$. 

	Secondly, we do not work with ``square-roots'' of the resolvent; that is, we do not write expressions like ``$(z - \Gamma)^{-1/2}$''. It is unclear to us whether this is necessarily a well-defined object. 
	\footnote{
		In general, if $A$ is self-adjoint, then it is clear that its resolvent $(z - A)^{-1}$ for $z \in \rho(A)$ is also self-adjoint. In particular, being self-adjoint implies it is normal. But conventional definitions of the square-root of an operator require the underlying operator to be normal and compact. Thus to define a square-root ``$(z - A)^{-1/2}$'', it necessarily requires $(z - A)^{-1}$ to be compact (i.e.\ so is in $\CompactOp$). But clearly $(z - A) \in \BddOp$. That implies $(z - A)(z - A)^{-1} = \ident_\HSpace$ is compact --- which is only possible if $\HSpace$ is finite-dimensional (a case which we explicitly do not consider throughout this paper). 
	}
	A lot of the work in our proofs goes to re-deriving the results of \cite{cardot2007clt} using only the resolvent but without invoking a square-root of the resolvent. This partly explains why our convergence rates differ from theirs. 
\end{rem}

Let's first setup some preliminary definitions and results. Let $\im := \sqrt{-1}$. Denote the orientated circle of the complex plane with center $\lambda_i$ and radius $\delta_i / 2$ as $\mathcal{B}_i := \{ \lambda_i + \frac{\delta_i}{2} e^{2\pi \im t} : t \in [0,1] \}$. We also denote the orientated circle $\widehat{\mathcal{B}}_i$ analogously with the center at $\hat{\lambda}_i$ and radius $\hat{\delta}_i / 2$. Define, 
\begin{equation*}
	\mathcal{C}_n := \bigcup_{i = 1}^{k_n} \mathcal{B}_i.
\end{equation*}
With some abuse of notations, for the approximate reciprocal $f_n$ that satisfies Assumption~\ref{as:ConditionF}, we will denote also $f_n$ as its analytic extension to the interior of $\mathcal{C}_n$. By Riesz functional calculus (see \cite{conway1994course} and \cite{kato1995perturbation}), we can define 
\begin{equation}
	\Gamma^\dagger
	:= f_n(\Gamma) 
	= \frac{1}{2 \pi \im} \int_{\mathcal{C}_n} (z - \Gamma)^{-1} f_n(z) \diff{z}.
	\label{eq:GammaDagger}
\end{equation}
Moreover, the projection of $\HSpace$ onto $\mathrm{span}\{ e_1, \ldots, e_{k_n} \}$ can be written as, 
\begin{equation}
	\Pi_{k_n}
	= \frac{1}{2 \pi \im} \int_{\mathcal{C}_n} (z - \Gamma)^{-1} \diff{z}.
	\label{eq:Projection}
\end{equation} 

Define the event, 
\begin{equation}
	\mathcal{A}_n := \bigcap_{j = 1}^{k_n} \left\{ \abs{ \hat{\lambda}_j - \lambda_j } < \frac{\delta_j}{4} \right\} 
	\label{eq:EventAn} 
\end{equation}
The following lemma show that, asymptotically, integrating over a collection of random circle traces centered at the empirical eigenvalues is equivalent to integrating over a collection of deterministic circle traces centered at the population eigenvalues. 
\begin{lem}
	\label{lem:ApproxIntegrationEmpiricalCn} 

	Let $f : \C \to \C$ be an analytic function. Define 
	\begin{align}
		\widehat{\mathcal{C}}_n &:= \bigcup_{j = 1}^{k_n} \widehat{\mathcal{B}}_j 
	\end{align}
	Then we have 
	\begin{equation}
		\frac{1}{2\pi\im} \int_{\widehat{\mathcal{C}}_n} f(z)(z - \Gamma_n)^{-1} \diff{z} 
		= \ind_{\mathcal{A}_n} \frac{1}{2\pi\im} \int_{\mathcal{C}_n} f(z) (z - \Gamma_n)^{-1} \diff{z} + r_n 
		\label{eq:ApproxIntegrationEmpiricalCn} 
	\end{equation}
	where $r_n$ is a random operator with 
	\begin{equation}
		\norm{ r_n } = \BigOhPee\left( \frac{k_n^2 \log k_n}{\sqrt{n}} \right) 
	\end{equation}
\end{lem}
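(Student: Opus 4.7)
The plan is to split the target identity according to whether $\mathcal{A}_n$ or $\mathcal{A}_n^c$ occurs. The central observation is that on $\mathcal{A}_n$ both collections of contours enclose exactly the same set of empirical eigenvalues with the same winding numbers, so by the Riesz--Dunford holomorphic functional calculus applied to $\Gamma_n$ the two integrals coincide as bounded operators, making $r_n$ vanish there. What remains is a crude operator-norm bound on $\int_{\widehat{\mathcal{C}}_n} f(z)(z-\Gamma_n)^{-1}\diff z$ valid on all of $\Omega$, combined with a tail bound on $\Prob(\mathcal{A}_n^c)$ that will be aggregated via Markov's inequality.

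First I would verify the enclosure claim on $\mathcal{A}_n$. The disk bounded by $\mathcal{B}_j$ contains $\hat\lambda_j$ since $|\hat\lambda_j-\lambda_j|<\delta_j/4<\delta_j/2$. For $i\neq j$, because $\delta_i$ is the smallest gap at $\lambda_i$ one has $\delta_i\le |\lambda_i-\lambda_j|$, and because $\delta_j$ is likewise the smallest gap at $\lambda_j$ one has $|\lambda_i-\lambda_j|\ge \delta_j$. Hence
\[
|\hat\lambda_i-\lambda_j|\ge |\lambda_i-\lambda_j|-|\hat\lambda_i-\lambda_i|>|\lambda_i-\lambda_j|-\delta_i/4\ge \tfrac{3}{4}|\lambda_i-\lambda_j|\ge \tfrac{3}{4}\delta_j>\delta_j/2,
\]
so $\hat\lambda_i$ sits outside the disk. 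The analogous claim for $\widehat{\mathcal{B}}_j$ is immediate from the definition of $\hat\delta_j$. The Dunford--Taylor formula then gives, on $\mathcal{A}_n$,
\[
\frac{1}{2\pi\im}\int_{\mathcal{C}_n}f(z)(z-\Gamma_n)^{-1}\diff z=\sum_{j=1}^{k_n}f(\hat\lambda_j)\,\hat{e}_j\otimes\hat{e}_j=\frac{1}{2\pi\im}\int_{\widehat{\mathcal{C}}_n}f(z)(z-\Gamma_n)^{-1}\diff z,
\]
so $r_n\equiv 0$ on $\mathcal{A}_n$, and consequently $r_n=\ind_{\mathcal{A}_n^c}\cdot\frac{1}{2\pi\im}\int_{\widehat{\mathcal{C}}_n}f(z)(z-\Gamma_n)^{-1}\diff z$.

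Next I would invoke the $ML$-inequality on $\widehat{\mathcal{C}}_n$. By self-adjointness of $\Gamma_n$, $\norm{(z-\Gamma_n)^{-1}}$ equals the reciprocal of the distance from $z$ to $\sigma(\Gamma_n)$, and by construction of $\hat\delta_j$ this distance is at least $\hat\delta_j/2$ for $z\in\widehat{\mathcal{B}}_j$. Combined with the arc length $\pi\hat\delta_j$ of $\widehat{\mathcal{B}}_j$ and the uniform bound $\sup_{z\in\widehat{\mathcal{C}}_n}|f(z)|<\infty$ from analyticity, each of the $k_n$ summands is $O(1)$, yielding
\[
\normBIG{\int_{\widehat{\mathcal{C}}_n}f(z)(z-\Gamma_n)^{-1}\diff z}\le Ck_n
\]
pointwise on $\Omega$. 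Hence $\norm{r_n}\le Ck_n\,\ind_{\mathcal{A}_n^c}$ almost surely.

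The main technical obstacle is the probability estimate $\Prob(\mathcal{A}_n^c)\lesssim k_n\log k_n/\sqrt{n}$, which upon taking expectations of the above bound and applying Markov's inequality delivers exactly the claimed $\BigOhPee(k_n^2\log k_n/\sqrt{n})$ rate. By Weyl's inequality $|\hat\lambda_j-\lambda_j|\le \norm{\Gamma_n-\Gamma}$, a union bound gives $\Prob(\mathcal{A}_n^c)\le \sum_{j=1}^{k_n}\Prob(|\hat\lambda_j-\lambda_j|\ge \delta_j/4)$. Each summand I would control via a Rosenthal/Bernstein-type moment inequality that fully exploits the finite sixth moment assumption in Assumption~\ref{as:ConditionA}(ii), together with the lower bound $\delta_{k_n}\gtrsim n^{-1/2}$ from Assumption~\ref{as:ConditionE}; the logarithmic factor should emerge from a dyadic peeling over indices $j\le k_n$ that balances the diminishing gaps $\delta_j$ against the tail probabilities. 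The contour-calculus step is essentially free once the enclosure lemma is in hand; all of the real work lives in this concentration estimate for the empirical eigenvalues.
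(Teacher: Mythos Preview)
Your splitting on $\mathcal{A}_n$ and the contour-deformation argument are exactly what the paper does, and your enclosure verification is more explicit than the paper's (which simply cites the curve-independence result for the Riesz functional calculus). One small omission: you only check that $\hat\lambda_i$ with $i\le k_n$, $i\neq j$, lies outside $\mathcal{B}_j$, but for the Dunford formula to yield $\sum_{j\le k_n} f(\hat\lambda_j)\hat e_j\otimes\hat e_j$ you also need $\hat\lambda_{k_n+1},\ldots,\hat\lambda_n$ (and $0$) outside every $\mathcal{B}_j$ on $\mathcal{A}_n$. The paper handles this in a separate remark; it is easy but should be mentioned.

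The real divergence is in how you reach the rate. The paper does \emph{not} bound $\norm{r_n}$ on $\mathcal{A}_n^c$ at all: since $r_n=0$ on $\mathcal{A}_n$, one has $\{\norm{r_n}>\epsilon\}\subseteq\mathcal{A}_n^c$ for every $\epsilon>0$, hence $\Prob(\norm{r_n}>\epsilon)\le\Prob(\mathcal{A}_n^c)$, and the paper then invokes the known estimate $\Prob(\mathcal{A}_n^c)\lesssim k_n^{2}\log k_n/\sqrt{n}$ from \cite{cardot2007clt}. This already yields $\norm{r_n}=o_\Prob(k_n^{2}\log k_n/\sqrt{n})$, so the ML step is unnecessary. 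Your route instead multiplies an ML bound $\norm{r_n}\le Ck_n\ind_{\mathcal{A}_n^c}$ by a \emph{sharper} estimate $\Prob(\mathcal{A}_n^c)\lesssim k_n\log k_n/\sqrt{n}$. That sharper estimate is the problem: the standard Markov/Weyl argument gives $\Prob(\mathcal{A}_n^c)\le\sum_{j\le k_n}4\,\E\norm{\Gamma_n-\Gamma}/\delta_j\lesssim n^{-1/2}\sum_{j\le k_n}\delta_j^{-1}$, and under the paper's assumptions $\delta_j^{-1}$ grows at least like $j$, so the sum is of order $k_n^{2}\log k_n$, not $k_n\log k_n$. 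A genuine Bernstein inequality would require sub-exponential tails, which finite sixth moments do not provide; Rosenthal with $p=3$ improves the $n$-exponent but worsens the $k_n$-exponent, and the ``dyadic peeling'' you allude to has no obvious mechanism to shave a factor of $k_n$ here. With only the established bound $\Prob(\mathcal{A}_n^c)\lesssim k_n^{2}\log k_n/\sqrt{n}$, your ML route would deliver $\BigOhPee(k_n^{3}\log k_n/\sqrt{n})$, which misses the target. The fix is simply to drop the ML step and argue as the paper does.
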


\begin{proof}
	On the event $\mathcal{A}_n$, and by definition of the operator valued contour integral, it is immediate that 
	\begin{equation*}
		\ind_{\mathcal{A}_n} \frac{1}{2\pi\im} \int_{\widehat{\mathcal{C}}_n} f(z)(z - \Gamma_n)^{-1} \diff{z} 
		= \ind_{\mathcal{A}_n} \frac{1}{2\pi\im} \int_{\mathcal{C}_n} f(z) (z - \Gamma_n)^{-1} \diff{z} 
	\end{equation*}
	where in particular, the domain of integration simplifies from the random domain $\widehat{\mathcal{C}}_n$ to the deterministic domain $\mathcal{C}_n$.
	\footnote{
		Actually from \cite[Proposition VII.4.6]{conway1994course}, we clearly don't need the strong condition that $f$ is analytic over all of $\C$. But this strong condition is easier to state and suffices for our paper. 
	}
	\begin{noobs} 
		This is actually a somewhat delicate argument and \cite[Lemma 5]{cardot2007clt} brushes through the finer argument details. First we note the complex integration surrounding Riesz functional calculus does \emph{not} depend on the choice of curves. 

		\begin{thm*}[\cite{conway1994course}, Proposition VII.4.6] 
			Let $\mathscr{A}$ be a Banach algebra with identity, and let $a \in \mathscr{A}$, and let $G$ be an open subset of $\C$ such that $\sigma(a) \subseteq G$. If $\Gamma = \{ \gamma_1, \ldots, \gamma_m \}$ and $\Lambda = \{ \lambda_1, \ldots, \lambda_k \}$ are two positively orientated collection of curves in $G$ such that $\sigma(a) \subseteq \mathrm{ins}\Gamma \subseteq G$ and $\sigma(a) \subseteq \mathrm{ins}\Lambda \subseteq G$ and if $f : G \to \C$ is analytic, then 
			\begin{equation*}
				\int_\Gamma f(z)(z - a)^{-1} \diff{z} = \int_\Lambda f(z)(z - a)^{-1} \diff{z} 
			\end{equation*}
		\end{thm*}
		In other words, as long as the two sets of curves surround the same spectrum, it doesn't matter which collection of curves to integrate over. 

		Let's come back to our case. Naturally, we have that $\hat{\lambda}_j$ is on the trace $\widehat{\mathcal{B}}_j$. But on the event $\mathcal{A}_n$, we note that $\hat{\lambda}_j$ is actually inside of the circle described by the trace $\mathscr{B}_j$. And using the above result, it means on the event $\mathcal{A}_n$, integrating over $\widehat{\mathcal{B}}_j$ or $\mathcal{B}_j$ are equivalent. 
	\end{noobs} 
	Thus we can write, 
	\begin{align*}
		&\frac{1}{2\pi\im} \int_{\widehat{\mathcal{C}}_n} f(z)(z - \Gamma_n)^{-1} \diff{z} \\ 
		&= \ind_{\mathcal{A}_n} \frac{1}{2\pi\im} \int_{\widehat{\mathcal{C}}_n} f(z)(z - \Gamma_n)^{-1} \diff{z} + \ind_{\mathcal{A}_n^c} \frac{1}{2\pi\im} \int_{\widehat{\mathcal{C}}_n} f(z)(z - \Gamma_n)^{-1} \diff{z} \\ 
		&\equiv \ind_{\mathcal{A}_n} \frac{1}{2\pi\im} \int_{\mathcal{C}_n} f(z)(z - \Gamma_n)^{-1} \diff{z} + r_n 
	\end{align*}

	It remains to show the operator $r_n$ converges to zero in probability at some appropriate rate. Fix any $\epsilon \in (0,1)$. Then we have
	\begin{equation*}
		\Prob( \norm{r_n} > \epsilon ) \le \Prob( \ind_{\mathcal{A}_n^c} > \epsilon) = \Prob(\mathcal{A}_n^c) 
	\end{equation*}
	\begin{noobs}
		For convenience, define 
		\begin{equation*}
			b_n := \frac{1}{2\pi\im} \int_{\widehat{\mathcal{C}}_n} f(z)(z - \Gamma_n)^{-1} \diff{z} 		
		\end{equation*}
		so $r_n = \ind_{\mathcal{A}_n^c} b_n$. In particular, $\norm{r_n} = \ind_{\mathcal{A}_n} \norm{b_n}$. Note that since $\epsilon \in (0,1)$ and the indicator can only take on values $\{ 0, 1 \}$, the statement $\{ \ind_B < \epsilon \}$ being true for any event $B$ must be equivalent to the statement $\{ \ind_B = 0 \}$. 
		\begin{align*}
			\Prob( \norm{r_n} > \epsilon ) 
			&= \Prob( \norm{r_n} > \epsilon \,|\, \ind_{\mathcal{A}_n^c} > \epsilon) \Prob( \ind_{\mathcal{A}_n^c} > \epsilon) \\ 
			&\quad + \Prob(  \norm{r_n} > \epsilon \,|\, \ind_{\mathcal{A}_n^c} \le \epsilon) \Prob( \ind_{\mathcal{A}_n^c} \le \epsilon) \\ 
			&= \Prob( \sqrt{n} \ind_{\mathcal{A}_n^c} \norm{b_n} > \epsilon \,|\, \ind_{\mathcal{A}_n^c} > \epsilon) \Prob( \ind_{\mathcal{A}_n^c} > \epsilon) \\ 
			&\quad + \Prob( \underbrace{\ind_{\mathcal{A}_n^c}}_{= 0} \norm{b_n} > \epsilon \,|\, \ind_{\mathcal{A}_n^c} \le \epsilon) \Prob( \ind_{\mathcal{A}_n^c} \le \epsilon) \\ 
			&= \overbrace{ \Prob( \ind_{\mathcal{A}_n^c} \norm{b_n} > \epsilon \,|\, \ind_{\mathcal{A}_n^c} > \epsilon) }^{\le 1} \Prob( \ind_{\mathcal{A}_n^c} > \epsilon) \\ 
			&\quad + \underbrace{\Prob( 1 \cdot 0 > \epsilon \,|\, \ind_{\mathcal{A}_n^c} \le \epsilon)}_{= 0} \Prob( \ind_{\mathcal{A}_n^c} \le \epsilon) \\ 
			&\le \Prob(\ind_{\mathcal{A}_n^c} > \epsilon) \\ 
			&= \Prob( \overbrace{ \underbrace{\ind_{\mathcal{A}_n^c}}_{ = 1} > \epsilon }^{1 > \epsilon} \,|\, \mathcal{A}_n^c ) \Prob( \mathcal{A}_n^c)  
			+ \Prob( \overbrace{\underbrace{ \ind_{\mathcal{A}_n^c} }_{= 0} > \epsilon}^{ 0 > \epsilon }  \,|\, \mathcal{A}_n ) \Prob( \mathcal{A}_n) \\ 
			&= 1 \cdot \Prob(\mathcal{A}_n^c) + 0 \cdot \Prob(\mathcal{A}_n) 
		\end{align*}
	\end{noobs}
	At this point, the rest of the proof follows exactly as in \cite[Lemma 5]{cardot2007clt}, who show 
	\begin{equation*}
		\Prob(\mathcal{A}_n^c) 
		\le \frac{C}{\sqrt{n}} k_n^2 \log k_n. 
	\end{equation*}
	This completes the proof. 
\end{proof}

\begin{rem}
	\label{rem:EmpiricalCovResolvent} 
	For completeness, we should check that even on the event $\mathcal{A}_n$ and any $j = 1, \ldots, k_n$, the integral $\int_{\mathcal{C}_n} f(z) (z - \Gamma_n)^{-1} \diff{z}$ is well defined for all $z \in \mathcal{B}_j$. This is particularly since the resolvent $(\cdot - \Gamma_n)^{-1}$ has singularities exactly at the eigenvalues of $\Gamma_n$. Of course, if we are integrating over the random empirical contours $\widehat{\mathcal{B}}_j$ the resolvent $(\cdot - \Gamma_n)^{-1}$ is well defined by definition. The finite rank operator $\Gamma_n$ has spectrum $\sigma(\Gamma_n) = \{0, \hat{\lambda}_1, \ldots, \hat{\lambda}_n \}$ for which we had assumed $\hat{\lambda}_1 > \hat{\lambda}_2 > \cdots > \hat{\lambda}_n > 0$. So immediately by definition of the event $\mathcal{A}_n$, the point $z$ is not equal to any one of $\hat{\lambda}_1, \ldots, \hat{\lambda}_{k_n} $. However, we still need to check such $z$ is not equal to any one of $ \hat{\lambda}_{k_n + 1}, \ldots, \hat{\lambda}_{n} $. Because of the strictly decreasing ordering of the $\hat{\lambda}_j$'s, it suffices to check that $z$ does not equal to $\hat{\lambda}_{k_n + 1}$. 

	For contradiction, suppose there exists some $z \in \mathcal{B}_{k_n}$ with $z = \hat{\lambda}_{k_n + 1}$. Then $z = \hat{\lambda}_{k_n + 1} = \lambda_{k_n} + \delta_{k_n} / 2 = \lambda_{k_n} + (\lambda_{k_n} - \lambda_{k_n + 1}) / 2 = 3 \lambda_{k_n} / 2 - \lambda_{k_n + 1} / 2$. But on the event $\mathcal{A}_n$, we have $\abs{ \hat{\lambda}_{k_n} - \lambda_{k_n} } < \delta_{k_n} / 4$. This implies $\delta_{k_n} / 4 > \abs{ 3 \lambda_{k_n} / 2 - \lambda_{k_n + 1} / 2 - \lambda_{k_n} } = \delta_{k_n} / 2$, which is a contradiction. 

	In all, this implies on the event $\mathcal{A}_n$ the resolvent $(\cdot - \Gamma_n)^{-1}$ is well defined on $\mathcal{B}_j$ for all $j = 1, \ldots, k_n$. 
\end{rem}

The primary uses of Lemma~\ref{lem:ApproxIntegrationEmpiricalCn} are with the case $f \equiv 1$ and setting $f$ as $f_n$. With only a little more work via the Borel-Cantelli lemma, \cite[Proposition 13]{crambes2013asymptotics} shows $\Prob( \limsup \mathcal{A}_n^c ) = 0$ if $(k_n \log k_n)^2 / n \to 0$. But for our purposes we want to keep track of the various rates of convergences and thus we do not invoke this result. Note that a look into the proof shows the result holds regardless of whether $f$ is dependent on $n$. Indeed, Lemma~\ref{lem:ApproxIntegrationEmpiricalCn} motivates the definitions 
\begin{equation}
	\begin{gathered}
		\widehat{\Pi}_{k_n} 
		:= \ind_{\mathcal{A}_n} \frac{1}{2\pi\im} \int_{\widehat{\mathcal{C}}_n} (z - \Gamma_n)^{-1} \diff{z} 
		\equiv \ind_{\mathcal{A}_n} \frac{1}{2\pi\im} \int_{\mathcal{C}_n} (z - \Gamma_n)^{-1} \diff{z}  \\ 
		\Gamma_n^\dagger 
		:= \ind_{\mathcal{A}_n} \frac{1}{2\pi\im} \int_{\widehat{\mathcal{C}}_n} f_n(z) (z - \Gamma_n)^{-1} \diff{z} 
		\equiv \ind_{\mathcal{A}_n} \frac{1}{2\pi\im} \int_{\mathcal{C}_n} f_n(z) (z - \Gamma_n)^{-1} \diff{z} 
	\end{gathered}
	\label{eq:EmpiricalProjections}
\end{equation}

%

Let's observe a simple bound on the roughened standard deviation $t_n(h)$ that we will repeatedly use.  
\begin{lem}
	\label{lem:SupNormalizedVec} 
	\begin{enumerate}[(i)] 
		\item For any $h \in \OptDomain$, 
			\begin{equation*}
				t_n(h) \ge f_n(\lambda_1) \lambda_{k_n}^{1/2} \norm{h} + a_n 
			\end{equation*}

		\item Provided Assumption~\ref{as:ConditionR} holds, then for $n$ sufficiently large, 
			\begin{equation*}
				\sup_{h \in \OptDomain} \normBIG{ \frac{h}{t_n(h)} } \lesssim \BigOh( \sqrt{k_n \log k_n} ) 
			\end{equation*}
	\end{enumerate}
\end{lem}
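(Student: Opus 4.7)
The plan for part (i) is a direct algebraic manipulation. Since $h \in \OptDomain = \ballH \cap \mathrm{span}\{e_1, \ldots, e_{k_n}\}$, I would write $h = \sum_{j=1}^{k_n} b_j e_j$ with $\sum_j b_j^2 = \norm{h}^2 \le 1$, so that the definition of $t_n$ reads
\[
t_n(h) - a_n = \sqrt{\sum_{j=1}^{k_n} \lambda_j [f_n(\lambda_j)]^2 b_j^2}.
\]
For each $j \le k_n$, monotonicity of the eigenvalue sequence gives $\lambda_j \ge \lambda_{k_n}$, and monotonicity of $f_n$ on $[c_n, \lambda_1 + \delta_1]$ (Assumption~\ref{as:ConditionF}(i)), combined with the fact that the $\lambda_j$'s relevant to the sum all lie in this interval (by the definition of $k_n$), yields $f_n(\lambda_j) \ge f_n(\lambda_1)$. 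Substituting these lower bounds summand-wise and pulling the common factor outside the sum gives $\sum_j \lambda_j [f_n(\lambda_j)]^2 b_j^2 \ge [f_n(\lambda_1)]^2 \lambda_{k_n} \norm{h}^2$. Taking square roots and adding $a_n$ completes part (i). A small bookkeeping point to verify en route is that $\lambda_j$ indeed lies in the interval where $f_n$ is decreasing for every $j \le k_n$; this needs the defining property of $k_n$ via the regularization $c_n$.

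For part (ii), I would apply (i) and maximize over $h$. Writing $u = \norm{h} \in [0,1]$ and $A = f_n(\lambda_1) \lambda_{k_n}^{1/2}$, part (i) gives
\[
\frac{\norm{h}}{t_n(h)} \le \frac{u}{Au + a_n}.
\]
Differentiating in $u$ shows the right-hand side is monotone increasing in $u$, so the supremum over $\OptDomain$ is attained at $u = 1$ and equals $1/(A + a_n)$. By Assumption~\ref{as:ConditionF}(ii), $\lambda_1 f_n(\lambda_1) \to 1$, hence $f_n(\lambda_1)$ is bounded below by a positive constant for $n$ sufficiently large. Dropping the non-negative term $a_n$ from the denominator produces the upper bound $\lesssim \lambda_{k_n}^{-1/2}$.

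The main obstacle is translating $\lambda_{k_n}^{-1/2}$ into the stated rate $\sqrt{k_n \log k_n}$. This amounts to a tail eigenvalue lower bound of the form $\lambda_{k_n} \gtrsim 1/(k_n \log k_n)$, which should be extracted from the convexity assumption on the eigenvalue sequence (Assumption~\ref{as:ConditionA}(iii)) together with the definition of $k_n$ via the regularization $c_n$: a convex positive decay cannot be too abrupt for $\lambda_{k_n} + \delta_{k_n}/2$ to remain at or above $c_n$. Assumption~\ref{as:ConditionR} plays a complementary role: once one has $A \asymp 1/\sqrt{k_n \log k_n}$, the hypothesis $a_n \sqrt{k_n \log k_n} \to 0$ ensures $a_n = o(A)$, so the denominator $A + a_n$ is of the same order as $A$ and $1/(A + a_n) \asymp \sqrt{k_n \log k_n}$. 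If the eigenvalue lower bound turns out to need a separate auxiliary lemma (for instance by comparing $\lambda_j$ to a convex envelope and arguing $\sum_{j > k_n} \lambda_j$ is controlled), I would insert that as a stand-alone step before invoking part (i).
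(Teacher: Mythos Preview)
Your proposal is correct and matches the paper's proof essentially line for line: the same coordinate expansion $h=\sum_{j\le k_n} b_j e_j$, the same monotonicity bounds $f_n(\lambda_j)\ge f_n(\lambda_1)$ and $\lambda_j\ge\lambda_{k_n}$ for part (i), and the same reduction of part (ii) to bounding $1/(f_n(\lambda_1)\lambda_{k_n}^{1/2}+a_n)$. The one place you are more explicit than the paper is in flagging the need for the eigenvalue lower bound $\lambda_{k_n}\gtrsim 1/(k_n\log k_n)$; the paper simply writes $f_n(\lambda_1)\lambda_{k_n}^{1/2}=\BigOh(1/\sqrt{k_n\log k_n})$ and passes to the reciprocal, tacitly treating this as a two-sided estimate inherited from \cite{cardot2007clt} (their Lemma~1) via Assumption~\ref{as:ConditionA}(iii), together with Assumption~\ref{as:ConditionR} to absorb $a_n$.
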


\begin{proof}
	\underline{(i):} For any $h \in \OptDomain$, we can write $h = \sum_{j = 1}^{k_n} b_j e_j$ for some $b_j \in \R$ such that $\norm{h}^2 = \sum_{j = 1}^{k_n} b_j^2 \le 1$. 
\begin{align*}
	t_n(h)
	= \sqrt{ \norm{ \Gamma^{1/2} \Gamma^\dagger h }^2 } + a_n  
	&\ge \sqrt{ \sum_{ j = 1}^{k_n} b_j^2 f_n(\lambda_j)^2 \lambda_j } + a_n  \\ 
	&\ge \sqrt{ f_n(\lambda_1)^2 \lambda_{k_n} \sum_{j = 1}^{k_n} b_j^2 } + a_n \\ 
	&= f_n(\lambda_1) \lambda_{k_n}^{1/2} \norm{h} + a_n 
\end{align*}

\underline{(ii):} It is clear the supremum is not achieved at $h = 0$ for any $n$. Thus applying the calculations of part (i) for any $h \in \OptDomain \setminus \{0\}$,  
\begin{equation*}
	\normBIG{ \frac{h}{t_n(h)} }  
	\le \frac{\norm{h}}{f_n(\lambda_1) \lambda_{k_n}^{1/2} \norm{h} + a_n} 
	\le \frac{1}{f_n(\lambda_1) \lambda_{k_n}^{1/2}  + a_n} 
\end{equation*}
Since $f_n(\lambda_1) \lambda_{k_n}^{1/2} = \left( \frac{1}{\lambda_1} \BigOh\left( \frac{1}{\sqrt{n}} \right) + 1 \right) \BigOh\left( \sqrt{ \frac{1}{k_n \log k_n} } \right)  = \BigOh\left( \frac{1}{\sqrt{k_n \log k_n}} \right)$, we have $f_n(\lambda_1) \lambda_{k_n}^{1/2} + a_n = \BigOh\left( \max\left\{ \frac{1}{ \sqrt{k_n \log k_n} } \,,\, a_n \right\} \right) = \BigOh\left( \frac{1}{\sqrt{k_n \log k_n}} \right)$ where the last equality follows from Assumption~\ref{as:ConditionR}.

\end{proof}

As outlined in the proof outline of this paper's main result Theorem~\ref{thm:NormalizedLeungTamStatisticConvergence}, there are two distinct steps to proving the result. 

\subsection{Step I} 
The $\mathcal{T}_n$ term is directly handled by \cite[Lemma 6]{cardot2007clt}; we record the result here for completeness. 
\begin{prop}
	\label{prop:TermTnGoesToZero} 
	If \eqref{as:ConditionH} holds, 
	\begin{equation*}
		\norm{\mathcal{T}_n} = o_\Prob\left( \frac{1}{\sqrt{n}} \right) 
	\end{equation*} 
\end{prop}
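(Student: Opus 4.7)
My plan is to decompose $\mathcal{T}_n = (\Gamma_n^\dagger \Gamma_n - \Pi_{k_n})\rho$ via Riesz functional calculus into two pieces: a \emph{regularization residual} that measures the deviation of $z f_n(z)$ from $1$, and a \emph{perturbation residual} that measures the deviation of $\Gamma_n$ from $\Gamma$. The first piece shrinks because of Assumption~\ref{as:ConditionF}(iv), while the second shrinks thanks to standard empirical covariance perturbation estimates combined with the gap-control built into Assumption~\ref{as:ConditionE}.

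First, I would invoke Lemma~\ref{lem:ApproxIntegrationEmpiricalCn} to replace the random contour $\widehat{\mathcal{C}}_n$ by the deterministic contour $\mathcal{C}_n$ on the event $\mathcal{A}_n$, paying only a negligible $\BigOhPee(k_n^2 \log k_n / \sqrt{n})$ remainder. On $\mathcal{A}_n$, adding and subtracting $z f_n(z)(z - \Gamma)^{-1}$ inside the integrand and invoking the resolvent identity $(z - \Gamma_n)^{-1} - (z - \Gamma)^{-1} = (z - \Gamma_n)^{-1}(\Gamma_n - \Gamma)(z - \Gamma)^{-1}$ yields
\[
\Gamma_n^\dagger \Gamma_n - \Pi_{k_n}
= \frac{1}{2\pi\im} \int_{\mathcal{C}_n} [z f_n(z) - 1](z - \Gamma)^{-1} \diff z
+ \frac{1}{2\pi\im} \int_{\mathcal{C}_n} z f_n(z) (z - \Gamma_n)^{-1}(\Gamma_n - \Gamma)(z - \Gamma)^{-1} \diff z.
\]

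For the regularization term, Assumption~\ref{as:ConditionF}(iv) gives $\sup_{z \in \mathcal{C}_n}|z f_n(z) - 1| = \SmallOhPee(1/\sqrt{n})$, and after projecting onto the population eigenbasis the resulting eigenvalue-indexed sum is controlled using the absolute summability $\sum_{l \ge 1}|\inner{\rho}{e_l}| < \infty$ from Assumption~\ref{as:ConditionA}(i); this yields $\SmallOhPee(1/\sqrt{n})$ when applied to $\rho$. For the perturbation term, on each disk $\mathcal{B}_j \subset \mathcal{C}_n$ we have $\norm{(z - \Gamma)^{-1}} \le 2/\delta_j$ by the spectral radius of the resolvent, and on $\mathcal{A}_n$ similarly $\norm{(z - \Gamma_n)^{-1}} \le 4/\delta_j$ (cf.\ Remark~\ref{rem:EmpiricalCovResolvent}). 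Combined with the classical bound $\norm{\Gamma_n - \Gamma} = \BigOhPee(1/\sqrt{n})$ from Bosq's perturbation theory, I would parametrize the contour, estimate the integrand uniformly on each $\mathcal{B}_j$, and sum the contributions over the $k_n$ disks, again leveraging Assumption~\ref{as:ConditionA}(i) to keep the eigenvector coefficient sums finite, to obtain $\SmallOhPee(1/\sqrt{n})$ when applied to $\rho$.

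The main obstacle is the perturbation term: the resolvent norm $\norm{(z - \Gamma)^{-1}}$ blows up like $1/\delta_j$ near each eigenvalue $\lambda_j$, while $\mathcal{C}_n$ consists of a growing number $k_n$ of disks whose radii shrink, so care is needed to ensure the cumulative contribution over the contour does not swamp the $1/\sqrt{n}$ rate we are trying to reach. The summability from Assumption~\ref{as:ConditionA}(i) is what allows the eigenvalue-indexed sums over $\inner{\rho}{e_l}$ to converge uniformly in $n$, and Assumption~\ref{as:ConditionE} (which guarantees $1/(\lambda_{k_n} - \lambda_{k_n+1}) = \BigOh(n^{1/2})$) is exactly what prevents the smallest gap from overwhelming the perturbation $\norm{\Gamma_n - \Gamma} = \BigOhPee(1/\sqrt{n})$.
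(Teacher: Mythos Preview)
The paper does not actually prove this proposition; it simply records it as a direct citation to \cite[Lemma~6]{cardot2007clt}. So there is no argument in the paper to compare against, and your proposal is an attempt to reconstruct one from scratch. Your algebraic decomposition via Riesz functional calculus is correct, and your treatment of the \emph{regularization residual} is fine: that piece is exactly $\sum_{j\le k_n}[\lambda_j f_n(\lambda_j)-1]\langle\rho,e_j\rangle e_j$ and is $o(1/\sqrt n)$ by Assumption~\ref{as:ConditionF}(iv).

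The genuine gap is in your \emph{perturbation residual}. Because $zf_n(z)=1+o(1/\sqrt n)$ uniformly on the relevant spectrum, your second integral
\[
\frac{1}{2\pi\im}\int_{\mathcal{C}_n} zf_n(z)\,(z-\Gamma_n)^{-1}(\Gamma_n-\Gamma)(z-\Gamma)^{-1}\diff z
\]
differs by $o_\Prob(1/\sqrt n)$ from $\frac{1}{2\pi\im}\int_{\mathcal{C}_n}[(z-\Gamma_n)^{-1}-(z-\Gamma)^{-1}]\diff z=\widehat\Pi_{k_n}-\Pi_{k_n}$, i.e.\ it is essentially $-\mathcal{Y}_n$. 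This projection-perturbation term is \emph{not} $o_\Prob(1/\sqrt n)$: already for $k_n=1$ the empirical eigenprojector satisfies a nondegenerate CLT, so $(\widehat\Pi_{k_n}-\Pi_{k_n})\rho$ is genuinely of exact order $1/\sqrt n$. Your crude disk-by-disk bound makes this worse, not better: on $\mathcal{B}_j$ you get a contribution of order $\|\Gamma_n-\Gamma\|\cdot|\langle\rho,e_j\rangle|/\delta_j$, and summing gives $O_\Prob\bigl(n^{-1/2}\sum_{j\le k_n}|\langle\rho,e_j\rangle|/\delta_j\bigr)$. Neither Assumption~\ref{as:ConditionA}(i) (only $\sum_j|\langle\rho,e_j\rangle|<\infty$) nor Assumption~\ref{as:ConditionE} (only $1/\delta_{k_n}=O(\sqrt n)$) makes that sum $o(1)$; in fact Assumption~\ref{as:ConditionE} yields at best $O_\Prob(1)$, not $o_\Prob(1/\sqrt n)$. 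So the route you sketch cannot close, and whatever argument underlies \cite[Lemma~6]{cardot2007clt} must proceed differently---it cannot pass through a term that is, up to negligible corrections, the very $\mathcal{Y}_n$ that the paper isolates and bounds separately at a much slower rate in Proposition~\ref{prop:TermYnGoesToZero}.
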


The next result is critical in the proofs of Propositions~\ref{prop:TermSnGoesToZero} and \ref{prop:TermYnGoesToZero}. 
\begin{lem} 
	\label{lem:ResolventGammaEmpiricalGammaApprox} 
	For any sufficiently large $j$ and $n$. 
	\begin{equation*}
		\E\left[ \norm{ (z - \Gamma)^{-1} (\Gamma_n - \Gamma)}^2 \right] 
		\lesssim \frac{j^3 \log j}{n}, 
		\quad \text{for all $z \in \mathcal{B}_j$} 
	\end{equation*}
\end{lem}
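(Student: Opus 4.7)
The plan is to bound the operator norm via the Hilbert--Schmidt norm, exploiting $\norm{A} \le \norm{A}_{HS}$ for any $A$ of finite Hilbert--Schmidt norm (which applies here, since $\Gamma_n - \Gamma$ is trace class, so the product is compact). Working in the eigenbasis $\{e_l\}$ of $\Gamma$, the resolvent diagonalizes as $(z-\Gamma)^{-1} = \sum_l (z-\lambda_l)^{-1}\, e_l \otimes e_l$, so the $(k,l)$-entry of $(z-\Gamma)^{-1}(\Gamma_n - \Gamma)$ in this basis equals $(z-\lambda_k)^{-1}(\Gamma_n - \Gamma)_{kl}$, and hence
\begin{equation*}
\E\bigl[\norm{(z-\Gamma)^{-1}(\Gamma_n - \Gamma)}_{HS}^2\bigr] = \sum_{k,l} \frac{\E[|(\Gamma_n - \Gamma)_{kl}|^2]}{|z - \lambda_k|^2}.
\end{equation*}

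Next I would estimate each entry using the Karhunen--Lo\`{e}ve expansion \eqref{eq:KLExpansion}: writing $\innersmall{X_i}{e_k} = \sqrt{\lambda_k}\,\xi_k^{(i)}$ yields $(\Gamma_n - \Gamma)_{kl} = \sqrt{\lambda_k\lambda_l}\bigl[\tfrac{1}{n}\sum_i \xi_k^{(i)}\xi_l^{(i)} - \mathbf{1}\{k=l\}\bigr]$, so by the $\mathrm{iid}$ structure and Cauchy--Schwarz together with Assumption~\ref{as:ConditionA}(ii),
\begin{equation*}
\E[|(\Gamma_n - \Gamma)_{kl}|^2] = \frac{\lambda_k\lambda_l}{n}\,\Var(\xi_k\xi_l) \le \frac{M\,\lambda_k\lambda_l}{n}.
\end{equation*}
Substituting and separating the double sum gives
\begin{equation*}
\E\bigl[\norm{(z-\Gamma)^{-1}(\Gamma_n - \Gamma)}_{HS}^2\bigr] \le \frac{M\,\mathrm{tr}(\Gamma)}{n}\sum_k \frac{\lambda_k}{|z-\lambda_k|^2},
\end{equation*}
with $\mathrm{tr}(\Gamma) = \sum_l \lambda_l = \E[\norm{X}^2] < \infty$ finite, and the whole $z$-dependence now isolated into one scalar spectral sum.

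It then remains to control this sum uniformly over $z \in \mathcal{B}_j$. The geometry of $\mathcal{B}_j$ gives $|z - \lambda_j| = \delta_j/2$, and by the very definition of $\delta_j$ in \eqref{eq:Radii} one also has $|z - \lambda_k| \ge \delta_j/2$ for every $k \ne j$; for $k$ far from $j$ this crude bound should be replaced by the much larger $|\lambda_j - \lambda_k| - \delta_j/2$. Splitting off the dominant $k = j$ term yields a contribution of order $\lambda_j/\delta_j^2$, while the convexity of $\lambda$ in Assumption~\ref{as:ConditionA}(iii) forces enough spacing on the tail that $\sum_{k \ne j} \lambda_k/|z-\lambda_k|^2$ should be of lower order.

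The main obstacle will be translating this spectral-gap bound into the claimed polynomial-polylog rate $j^3 \log j$. Convexity only directly gives $\delta_j = \lambda_j - \lambda_{j+1}$; to convert $\lambda_j/\delta_j^2$ into the required $j$-dependence I would need a quantitative lower bound on $\delta_j$ in terms of $j$, together with a counting-style estimate of the number of eigenvalues contributing in a $\delta_j$-neighborhood of $\lambda_j$ (which is a natural source of the extra $\log j$ factor). If the convexity alone proves insufficient to absorb the tail, a fallback is to keep $\delta_j$ explicit throughout and let the downstream uses of this lemma in Step~\eqref{eq:ProofStepI} (where Assumption~\ref{as:ConditionE} controls $1/\delta_{k_n}$) absorb any residual slack.
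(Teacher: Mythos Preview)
Your reduction to the scalar spectral sum is correct and matches the paper exactly: Hilbert--Schmidt bound, diagonalization of the resolvent, the KL/iid estimate $\E[|(\Gamma_n-\Gamma)_{kl}|^2]\le M\lambda_k\lambda_l/n$, and the factorization that leaves you with $\tfrac{M\,\mathrm{tr}(\Gamma)}{n}\sum_k \lambda_k/|z-\lambda_k|^2$. So the first half is done.

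The gap is the second half, which you flag but do not resolve. Two concrete issues. First, the ``crude'' uniform bound $|z-\lambda_k|\ge\delta_j/2$ yields $\sum_k \lambda_k/|z-\lambda_k|^2 \le 4\,\mathrm{tr}(\Gamma)/\delta_j^2$, and this is in general \emph{worse} than the target: convexity only gives $\delta_j\asymp -\lambda'(j)$, so for the borderline regime $\lambda_j\asymp 1/(j\log j)$ one has $1/\delta_j^2\asymp j^4(\log j)^2$, not $j^3\log j$. You therefore \emph{must} exploit the larger gaps for $k$ far from $j$, and your fallback of carrying $\delta_j$ through would not prove the lemma as stated. Second, the missing ingredient that makes the tail tractable is not a counting argument but a convexity-based comparison (this is \cite[Lemma~1]{cardot2007clt} in the paper's references): for $l<j$ one has $\lambda_l-\lambda_j\ge(1-l/j)\lambda_l$, and symmetrically for $l>j$. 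With this in hand the paper splits $\sum_{l\ne j}\lambda_l/(\lambda_l-\lambda_j)^2$ into the three ranges $l<j$, $j<l\le 2j$, $l>2j$, bounds each by $Cj^2/\lambda_j$ via the comparison and the polygamma-type sums $\sum_{l<j}(1-l/j)^{-2}\asymp j^2$, and then invokes the rate $1/\lambda_j\lesssim j\log j$ (again a consequence of Assumption~\ref{as:ConditionA}(iii) and summability) to land on $j^3\log j$. The $k=j$ term contributes only $\lambda_j/\delta_j^2\lesssim j\log j$, so it is the off-diagonal sum $j^2/\lambda_j$ that drives the final rate. None of this is visible from the raw convexity statement $\delta_j=\lambda_j-\lambda_{j+1}$; you need the Cardot-type gap inequality.
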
 

\begin{proof} 
	Since $\Gamma, \Gamma_n \in \CompactOp \subseteq \BddOp$ then it follows that the resolvent $R(z ; \Gamma) := (z - \Gamma)^{-1}$ is also in $\BddOp$, and thus $(\Gamma - \Gamma_n)(z - \Gamma)^{-1} \in \BddOp$. Hence we can bound the operator norm $\norm{\cdot}$ by the Hilbert-Schmidt norm $\norm{\cdot}_{\mathrm{HS}}$ 
	\footnote{
		See \cite[Exercise IX.2.19]{conway1994course} 
	}
	\begin{align}
		\norm{(\Gamma - \Gamma_n)(z - \Gamma)^{-1}}^2 
		&\le \norm{(\Gamma - \Gamma_n)(z - \Gamma)^{-1}}^2_{\mathrm{HS}} \nonumber \\ 
		&\equiv \sum_{l = 1}^\infty \norm{ (\Gamma - \Gamma_n)(z - \Gamma)^{-1}(e_l) }^2 \nonumber \\ 
		&= \sum_{l, k = 1}^\infty \frac{1}{(z - \lambda_l)^2} \absBIG{ \inner{(\Gamma - \Gamma_n)e_l}{e_k} }^2  
		\label{pfeq:BSEChicken} 
	\end{align}
	\begin{noobs}
		To see \eqref{pfeq:BSEChicken}: 
	\begin{align*}
		&\norm{(\Gamma - \Gamma_n)(z - \Gamma)^{-1}}^2 \\ 
		&\le \norm{(\Gamma - \Gamma_n)(z - \Gamma)^{-1}}^2_{\mathrm{HS}} \\ 
		&\equiv \sum_{l = 1}^\infty \norm{ (\Gamma - \Gamma_n)(z - \Gamma)^{-1}(e_j) }^2 && \text{defn of HS norm} \\ 
		&= \sum_{l = 1}^\infty \sum_{k = 1}^\infty  \absBIG{ \inner{(\Gamma - \Gamma_n)(z - \Gamma)^{-1}(e_l)}{e_k} }^2 && \text{Parceval's identity} \\ 
		&= \sum_{l = 1}^\infty \sum_{k = 1}^\infty  \absBIG{ \inner{(\Gamma - \Gamma_n) \frac{1}{z - \lambda_l}e_l}{e_k} }^2 && \text{eigenvalues of resolvent} 
	\end{align*}
	\begin{itemize}
		\item The first line is a result from \cite[Exercise IX.2.19]{conway1994course} that basically says the HS norm is more precise than the operator norm 

		\item Fourth line uses that the eigenvalues of the resolvent $R(z ; T)$ of an operator $T$ is exactly $(z - \lambda_j)$'s and with the same eigenvectors as $T$. See \cite{kato1995perturbation}. 
	\end{itemize}
	\end{noobs} 

	Observe that for $z \in \mathcal{B}_j$ and $l \neq j$, by the triangle inequality we have that 
	\begin{equation}
		\abs{z - \lambda_l} \ge \frac{\abs{\lambda_l - \lambda_j}}{2}. 
		\label{pfeq:BSEKanagroo} 
	\end{equation}
	\begin{noobs}
		\begin{align*}
			\abs{z - \lambda_l} 
			&= \abs{ (\lambda_l - \lambda_j) - (z - \lambda_j) } \\
			&\ge \abs{ \abs{\lambda_l - \lambda_j} - \abs{z - \lambda_j}  } && \text{reverse triangle inequality} \\ 
			&\ge \abs{\lambda_l - \lambda_j} - \abs{z - \lambda_j} \\ 
			&= \abs{\lambda_l - \lambda_j} - \frac{\delta_j}{2} \\ 
			&\ge \abs{\lambda_l - \lambda_j} / 2 
		\end{align*} 
		where we have used that any $z \in \mathcal{B}_j$ must be of the form $z = \lambda_j + \frac{\delta_j}{2} e^{\im\theta}$ for some $\theta \in [0, 2\pi)$. Note in the special case that $l = j$, we immediately have that $\abs{z - \lambda_j} = \frac{\delta_j}{2}$ (i.e.\ it's just the radius of the circle).
	\end{noobs}

	In addition, by the KL expansion, we have for all $l, k = 1, 2, \ldots$ 
	\begin{equation}
		\E[ \absBIG{\inner{(\Gamma_n - \Gamma) e_l }{e_k }}^2 ] 
		\le \frac{1}{n} \E\left[ \inner{X_1}{e_l}^2 \inner{X_1}{e_k}^2 \right] 
		\le \frac{M}{n} \lambda_l \lambda_k 
		\label{pfeq:BSEKoala} 
	\end{equation}
	\begin{noobs}
		\begin{align*}
		\E[ \absBIG{\inner{(\Gamma_n - \Gamma) e_l }{e_k }}^2 ] 
		&= \E\left[  \absBIG{  \inner{ \frac{1}{n} \sum_{i = 1}^n (X_i \otimes X_i - \Gamma) e_l }{e_k}  }^2 \right] \\ 
		&= \frac{1}{n^2} \sum_{i, j = 1}^n \E\left[ \inner{ (X_i \otimes X_i - \Gamma)e_l }{ e_k } \inner{ (X_j \otimes X_j - \Gamma)e_l }{ e_k } \right] \\ 
		&= \frac{1}{n^2} \sum_{i = 1}^n \E[ \inner{ (X_i \otimes X_i - \Gamma)e_l}{e_k}^2 ] \\ 
		&\quad + \frac{1}{n^2} \sum_{i \neq j}^n 
		\E\left[  \inner{ (X_i \otimes X_i - \Gamma)e_l }{e_k }  \inner{ (X_j \otimes X_j - \Gamma)e_l }{e_k } \right]  \\ 
		&= \frac{1}{n^2} \sum_{i = 1}^n \E\left[ \inner{ (X_i \otimes X_i - \Gamma)e_l }{e_k }  \right] \\
		&= \frac{1}{n^2} \cdot n \E\left[ \inner{ (X_1 \otimes X_1 - \Gamma)e_l }{e_k } \right] \\ 
		&= \frac{1}{n} \E\left[  \inner{X_1 \otimes X_1(e_l)}{e_k}^2 - 2 \inner{ X_1 \otimes X_1(e_l) }{ e_k } \inner{\Gamma e_l}{e_k} + \inner{\Gamma e_l}{e_k}^2 \right] \\ 
		&= \frac{1}{n} \E\left[ \inner{ \inner{X_1}{e_l} X_1 }{ e_k }^2 \right] - 2 \inner{\Gamma e_l}{e_k}^2 + \inner{\Gamma e_l}{e_k}^2 \\ 
		&= \frac{1}{n} \E\left[ \inner{  X_1 }{ e_l }^2 \inner{X_1}{e_k}^2 \right] - \inner{\Gamma e_l}{e_k}^2 \\ 
		&\le \frac{1}{n} \E\left[ \inner{X_1}{e_l}^2 \inner{X_1}{e_k}^2 \right] 
		\end{align*} 
		where we have used that $X_i$'s are iid, $X_i \otimes X_i - \Gamma$ is mean zero by the definition of $\Gamma$, and the definition of the tensor operator. 

		Now we use the KL expansion \eqref{eq:KLExpansion} to have, 
		\begin{align*} 
			\inner{X_1}{e_l} \inner{X_1}{e_k} 
			&= \inner{ \sum_{j = 1}^\infty \sqrt{\lambda_j} \xi_j e_j }{e_l} \inner{ \sum_{j' = 1}^\infty \sqrt{\lambda_{j'}} \xi_{j'} e_{j'} }{ e_k } \\ 
			&= \sum_{j, j' = 1}^\infty \sqrt{\lambda_j} \sqrt{\lambda_{j'}} \xi_j \xi_{j'} \inner{e_j}{e_l} \inner{e_{j'}}{e_k} \\ 
			&= \sqrt{\lambda_l \lambda_k} \xi_l \xi_k 
		\end{align*} 
		which then implies,
		\begin{equation*}
			\E[ \absBIG{\inner{(\Gamma_n - \Gamma) e_l }{e_k }}^2 ] 
			\le \frac{1}{n} (\sqrt{\lambda_l \lambda_k})^2 \E[ \xi_l^2 \xi_k^2 ] 
			\le \frac{M}{n} \lambda_l \lambda_k 
		\end{equation*}
		where we applied the Cauchy-Schwartz inequality, and recall Condition~\ref{as:ConditionA}. 
	\end{noobs} 

	Thus, applying \eqref{pfeq:BSEKoala} and \eqref{pfeq:BSEKanagroo} into \eqref{pfeq:BSEChicken} 
	\begin{equation}
		\E\left[ \norm{(\Gamma - \Gamma_n)(z - \Gamma)^{-1}}^2 \right] 
		= 4M \frac{\lambda_j}{\delta_j^2} \frac{1}{n} \sum_{k = 1}^\infty \lambda_k + 4M \frac{1}{n} \sum_{l \neq j}^\infty \frac{\lambda_l}{(\lambda_l - \lambda_j)^2} \sum_{k = 1}^\infty \lambda_k,
		\label{pfeq:BSEKiwi} 
	\end{equation}
	for all $z \in \mathcal{B}_j$. 
	\begin{noobs}
	\begin{align*}
		\E\left[ \sup_{z \in \mathcal{B}_j} \norm{(\Gamma - \Gamma_n)(z - \Gamma)^{-1}}^2 \right] 
		&\le \E\left[ \sup_{z \in \mathcal{B}_j} \sum_{l = 1}^\infty \sum_{k = 1}^\infty \frac{1}{(\lambda_l - z)^2} \inner{ (\Gamma_n - \Gamma)e_l }{ e_k }^2 \right] \\
		&= \E\left[ \sup_{z \in \mathcal{B}_j} \left( \sum_{l = j} + \sum_{l \neq j} \right) \sum_{k = 1}^\infty \frac{1}{(\lambda_l - z)^2} \inner{ (\Gamma_n - \Gamma)e_l }{ e_k }^2  \right] \\ 
		&= \E\Bigg[ \sup_{z \in \mathcal{B}_j}
			\Bigg(
				\sum_{k = 1}^\infty \overbrace{\frac{1}{(\lambda_j - z)^2}}^{= 1 / (\delta_j / 2)^2} \overbrace{\inner{ (\Gamma_n - \Gamma)e_j }{ e_k }^2}^{\le M \lambda_j \lambda_k / n }  \\ 
		&\quad +  \sum_{l \neq j}^\infty \sum_{k = 1}^\infty \underbrace{\frac{1}{(\lambda_l - z)^2}}_{ \le \left( 2 / \abs{\lambda_l - \lambda_j} \right)^2 }  \underbrace{\inner{ (\Gamma_n - \Gamma)e_l }{ e_k }^2}_{ \le M \lambda_l \lambda_k / n } 
		\Bigg) 
		\Bigg] \\ 
		&\le \sum_{k = 1}^\infty \frac{4}{\delta_j^2} \frac{M}{n} \lambda_j \lambda_k + \sum_{l \neq j}^\infty \sum_{k = 1}^\infty \frac{4}{( \lambda_l - \lambda_j )^2} \frac{M}{n} \lambda_l \lambda_k \\ 
		&= 4M \frac{\lambda_j}{\delta_j^2} \frac{1}{n} \sum_{k = 1}^\infty \lambda_k + 4M \frac{1}{n} \sum_{l \neq j}^\infty \frac{\lambda_l}{(\lambda_l - \lambda_j)^2} \sum_{k = 1}^\infty \lambda_k 
	\end{align*}
	\end{noobs}

	At this point we need to investigate the behavior of $\sum_{l \neq j} \frac{\lambda_l}{(\lambda_l - \lambda_j)^2}$.
	\footnote{
		Observe that this is a different term compared to that of \cite[Lemma 2]{cardot2007clt} 
	}
	Let's decompose,  
	\begin{equation}
		\sum_{l \neq j} \frac{\lambda_l}{(\lambda_l - \lambda_j)^2} 
		= \left(  \sum_{l = 1}^{j - 1} + \sum_{l = j + 1}^{2j} + \sum_{l = 2j + 1}^\infty \right) \frac{\lambda_l}{(\lambda_l - \lambda_j)^2} 
		=: T_1 + T_2 + T_3 
		\label{pfeq:BSESongbird} 
	\end{equation}

	By \cite[Lemma 1]{cardot2007clt} where we have $\lambda_l - \lambda_j \ge (1 - l / j ) \lambda_l$, and recalling that the eigenvalues are strictly decreasing, 
	\begin{equation}
		T_1 
		=  \sum_{l = 1}^{j - 1} \frac{\lambda_l}{(\lambda_l - \lambda_j)^2}  
		\le \frac{1}{\lambda_{j - 1}} \sum_{ l =1}^{j - 1} \frac{1}{(1 - l / j)^2}  
		= \frac{j^2}{\lambda_{j - 1}} \frac{1}{6} (\pi^2 - 6 \psi^{(1)}(j)) 
		\label{pfeq:BSESongbirdT1} 
	\end{equation}
	where $\psi^{(m)}$ is the polygamma function of order $m$. 
	\footnote{ 
		The \emph{polygamma function of order $m$} is defined to be the $(m + 1)$th derivative of the logarithm of the gamma function; or equivalently, it is the $m$th derivative of the digamma function. 
	} 
	\begin{noobs}
	\begin{align*}
		T_1 
		&=  \sum_{l = 1}^{j - 1} \frac{\lambda_l}{(\lambda_l - \lambda_j)^2} \\ 
		&\le \sum_{l = 1}^{j - 1} \frac{\lambda_l}{(1 - l / j)^2} \frac{1}{\lambda_l^2} \\ 
		&\le \frac{1}{\lambda_{j-1}} \sum_{ l =1}^{j - 1} \frac{1}{(1 - l / j)^2} \\ 
		&\le \frac{1}{\lambda_{j - 1}} \sum_{ l =1}^{j - 1} \frac{1}{(1 - l / j)^2} \\ 
		&= \frac{j^2}{\lambda_{j - 1}} \frac{1}{6} (\pi^2 - 6 \psi^{(1)}(j)) 
	\end{align*}
	The last equation follows from a WolframAlpha calculation... 
	\end{noobs} 
	Similarly, we have 
	\begin{equation}
		T_2 \le \frac{\lambda_{j + 1}}{\lambda_j^2} \frac{j^2}{6} (\pi^2 - 6 \psi^{(1)} (j + 1) ) 
		\label{pfeq:BSESongbirdT2} 
	\end{equation}
	\begin{noobs}
	\begin{align*}
		T_2 
		&= \sum_{l = j + 1}^{2j} \frac{\lambda_l}{(\lambda_l - \lambda_j)^2} \\ 
		&\le \sum_{l = j + 1}^{2j} \frac{\lambda_l}{(1 - j / l)^2} \frac{1}{\lambda_j^2} \\ 
		&= \frac{1}{\lambda_j^2} \sum_{l = j + 1}^{2j} \frac{\lambda_l}{( 1 - j / l)^2} \\ 
		&\le \frac{\lambda_{j + 1}}{\lambda_j^2} \sum_{l = j + 1}^{2j} \frac{1}{(1 - j / l)^2} \\ 
		&= \frac{\lambda_{j + 1}}{\lambda_j^2} \frac{j^2}{6} (\pi^2 - 6 \psi^{(1)} (j + 1) ) 
	\end{align*}
	\end{noobs}
	And since for $l \ge 2j + 1$ we have $\lambda_j - \lambda_l \ge \lambda_j - \lambda_{2j + 1} > \lambda_j - \lambda_{2j} \ge (1 - j / (2j) ) \lambda_j = 2 \lambda_j$, this implies, 
	\begin{align}
		T_3  
		< \frac{1}{4 \lambda_j^2} \sum_{l = 2j + 1}^\infty \lambda_l  
		\le \frac{1}{4 \lambda_j^2} ( (2j + 1) + 1 ) \lambda_{2j + 1} 
		< \frac{1}{4\lambda_j^2} 2(j + 1) \lambda_j 
		= \frac{j + 1}{2 \lambda_j} 
		\label{pfeq:BSESongbirdT3} 
	\end{align}
	where the second inequality follows from \cite[Lemma 1]{cardot2007clt}. 

	Now we use the following well-known bounds of the polygamma function: for $m \ge 1$ and $x > 0$, 
	\begin{equation}
		\frac{(m - 1)!}{x^m} + \frac{m!}{2x^{m + 1}} \le (-1)^{m + 1} \psi^{(m)}(x) \le \frac{(m - 1)!}{x^m} + \frac{m!}{x^{m+1}} 
		\label{pfeq:BSEPolygammaBounds} 
	\end{equation}
	\begin{noobs}
		In our situation, applying \eqref{pfeq:BSEPolygammaBounds} with $m = 1$, and to the case $x = j$
		\begin{equation*}
			\frac{1}{j} + \frac{1}{2j^2} \le \psi^{(1)}(j) \le \frac{1}{j} + \frac{1}{j^2} 
		\end{equation*}
		and to the case $x = j + 1$ 
		\begin{equation*}
			\frac{1}{j + 1} + \frac{1}{2(j + 1)^2} \le \psi^{(1)}(j + 1) \le \frac{1}{j + 1} + \frac{1}{(j + 1)^2} 
		\end{equation*}

		Thus from \eqref{pfeq:BSESongbirdT1}, we have 
		\begin{equation*}
			T_1 
			\le C_1 \frac{j^2}{\lambda_j} \left(1 + \frac{1}{j} + \frac{1}{j^2} \right) 
			= C_1 \frac{1}{\lambda_j} \left(j^2 + j + 1 \right) 
			= \mathcal{O}\left(  \frac{j^2}{\lambda_j} \right)
		\end{equation*}
		We have analogous calculations for $T_2$ and $T_3$. 
	\end{noobs}
	and applying \eqref{pfeq:BSEPolygammaBounds} to \eqref{pfeq:BSESongbirdT1}-\eqref{pfeq:BSESongbirdT3} we obtain the bounds, 
	\begin{align}
		T_1 &\le C_1 \frac{j^2}{\lambda_j}, 
		    & 
		T_2 &\le C_2 \frac{j^2}{\lambda_j}, 
		    &
		T_3 &\le C_3 \frac{j + 1}{\lambda_j} 
		\label{pfeq:BSESongbirdSummaryTBounds} 
	\end{align}
	Putting \eqref{pfeq:BSESongbirdSummaryTBounds} back into \eqref{pfeq:BSESongbird}, we arrive at, 
	\begin{equation}
		\sum_{l \neq j} \frac{\lambda_l}{(\lambda_l - \lambda_j)^2} \le C \frac{j^2}{\lambda_j} 
		\label{pfeq:BSESongEagle}
	\end{equation}

	Now putting \eqref{pfeq:BSESongEagle} into \eqref{pfeq:BSEKiwi}, we have 
	\begin{equation}
		\E\left[ \norm{(\Gamma - \Gamma_n)(z - \Gamma)^{-1}}^2 \right] 
		\le C \frac{1}{n} \max\left\{ \frac{\lambda_j}{\delta_j^2} , \frac{j^2}{\lambda_j} \right\}, 
		\label{pfeq:BSEMomentBound} 
	\end{equation}
	for all $z \in \mathcal{B}_j$.

	Applying Condition~\ref{as:ConditionA} shows that for sufficiently large $j$, 
	\begin{equation*}
		\max\left\{ \frac{\lambda_j}{\delta_j^2} , \frac{j^2}{\lambda_j} \right\} 
		\le C \max\left\{ j \log j,  j^3 \log j \right\} 
		= C j^3 \log j 
	\end{equation*}
	\begin{noobs} 
	\begin{equation*}
		\frac{\lambda_j}{\delta_j^2} 
		\le C \frac{1}{j \log j} (j \log j)^2 
		= C j \log j 
	\end{equation*}
	and likewise, 
	\begin{equation*}
		\frac{j^2}{\lambda_j} 
		\le C j^2 j \log j 
		= C j^3 \log j 
	\end{equation*}
	\end{noobs} 
	This completes the proof.

\end{proof} 

\begin{prop}
	\label{prop:TermYnGoesToZero} 
	For sufficiently large $n$, 
	\begin{equation*}
		\sup_{h \in \OptDomain} \absBIG{ \inner{\mathcal{Y}_n}{ \frac{h}{t_n(h)} } }  
		\lesssim \BigOhPee\left( \frac{k_n^{9/2} (\log k_n)^{3/2}}{\sqrt{n}}  \right) 
	\end{equation*}
\end{prop}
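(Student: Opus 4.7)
The plan is to reduce the supremum to an operator-norm bound on $\mathcal{Y}_n = (\Pi_{k_n}-\widehat{\Pi}_{k_n})\rho$ via Cauchy--Schwarz, and then control $\|\mathcal{Y}_n\|$ through a resolvent representation of the projection difference. First, I would apply Cauchy--Schwarz and Lemma~\ref{lem:SupNormalizedVec}(ii) to obtain
$$\sup_{h \in \OptDomain}\absBIG{\inner{\mathcal{Y}_n}{\tfrac{h}{t_n(h)}}} \le \|\mathcal{Y}_n\| \cdot \sup_{h \in \OptDomain}\normBIG{\tfrac{h}{t_n(h)}} \lesssim \|\mathcal{Y}_n\|\sqrt{k_n \log k_n},$$
so that it suffices to show $\|\mathcal{Y}_n\| = \BigOhPee(k_n^{4}\log k_n / \sqrt{n})$.

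To bound $\|\mathcal{Y}_n\|$, I would use the resolvent representations \eqref{eq:Projection} and \eqref{eq:EmpiricalProjections}, combined with Lemma~\ref{lem:ApproxIntegrationEmpiricalCn} applied with $f \equiv 1$, to replace integration over the random contour $\widehat{\mathcal{C}}_n$ by integration over the deterministic $\mathcal{C}_n$ on the event $\mathcal{A}_n$, at the cost of a remainder $r_n$ with $\|r_n\| = \BigOhPee(k_n^2 \log k_n/\sqrt{n})$. The second resolvent identity then yields $\mathcal{Y}_n = -(2\pi\im)^{-1}\int_{\mathcal{C}_n}(z-\Gamma_n)^{-1}(\Gamma_n-\Gamma)(z-\Gamma)^{-1}\rho\, dz + r_n\rho$ on $\mathcal{A}_n$. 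For $z \in \mathcal{B}_j$, I would use the spectral theorem to get $\|(z-\Gamma)^{-1}\|=2/\delta_j$, and argue $\|(z-\Gamma_n)^{-1}\| \le C/\delta_j$ on $\mathcal{A}_n$ by estimating the distance from $z$ to $\sigma(\Gamma_n)$: the ``relevant'' eigenvalues $\hat\lambda_l$ with $l\le k_n$ stay within $\delta_l/4$ of $\lambda_l$ by construction of $\mathcal{A}_n$, while the ``tail'' eigenvalues $\hat\lambda_l$ with $l>k_n$ stay close to $\lambda_l$ via the Lidskii-type inequality $\sup_l|\hat\lambda_l-\lambda_l| \le \|\Gamma_n-\Gamma\|$ together with Assumption~\ref{as:ConditionE}. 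I would then apply Lemma~\ref{lem:ResolventGammaEmpiricalGammaApprox} to the middle factor, pass through $\E$ and Jensen's inequality to extract the pointwise-in-$z$ bound $\sqrt{\E\|(\Gamma_n-\Gamma)(z-\Gamma)^{-1}\|^2} \lesssim (j^3\log j/n)^{1/2}$, integrate over each $\mathcal{B}_j$ (of arc length $\pi\delta_j$), sum over $j=1,\ldots,k_n$, and finally invoke Markov's inequality to transfer the resulting expectation bound into the sought-after $\BigOhPee$ rate on $\|\mathcal{Y}_n\|$.

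The hard part will be the careful accounting of rates. Lemma~\ref{lem:ResolventGammaEmpiricalGammaApprox} only gives a pointwise (in $z$) second-moment bound, so transferring it to the integral bound passes through $\E$ and loses factors relative to a hypothetical uniform-in-$z$ control of the integrand. A secondary subtlety is justifying $\|(z-\Gamma_n)^{-1}\| \le C/\delta_j$ on $\mathcal{A}_n$ without resorting to ``square-root'' resolvent manipulations like $(z-\Gamma_n)^{-1/2}$, which the authors explicitly avoid (since their well-definedness in infinite-dimensional $\HSpace$ is unclear); this forces one to handle the full spectrum of $\Gamma_n$ simultaneously. Using the rough estimate $1/\delta_j \lesssim j\log j$ from Assumption~\ref{as:ConditionA} and the polygamma-type summation bounds appearing in the proof of Lemma~\ref{lem:ResolventGammaEmpiricalGammaApprox} should then yield the claimed $k_n^{9/2}(\log k_n)^{3/2}/\sqrt{n}$ rate once combined with the $\sqrt{k_n\log k_n}$ factor from Step~1.
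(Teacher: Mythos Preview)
Your approach is correct and takes a somewhat more direct route than the paper's. Both arguments begin identically: Cauchy--Schwarz with Lemma~\ref{lem:SupNormalizedVec}(ii) to reduce to an operator-norm bound, then the resolvent representation together with Lemma~\ref{lem:ApproxIntegrationEmpiricalCn} to land on $\ind_{\mathcal{A}_n}\int_{\mathcal{C}_n}(z-\Gamma_n)^{-1}(\Gamma_n-\Gamma)(z-\Gamma)^{-1}\rho\,dz$ plus lower-order remainders. The difference is in how this integral is handled. You stop at the first resolvent identity and bound the product directly via the on-event estimate $\|(z-\Gamma_n)^{-1}\|\ind_{\mathcal{A}_n}\le C/\delta_j$ and Lemma~\ref{lem:ResolventGammaEmpiricalGammaApprox}; since $\ind_{\mathcal{A}_n}\|(z-\Gamma_n)^{-1}\|$ is deterministically bounded, pulling it outside $\E$ before applying the lemma is legitimate. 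The paper instead applies the resolvent identity a \emph{second} time to split off a ``linear'' piece $\mathsf{S}_n$ involving only $(z-\Gamma)^{-1}$ from a ``quadratic'' remainder $\mathsf{R}_n$ carrying two factors of $\Gamma_n-\Gamma$ and one $(z-\Gamma_n)^{-1}$; the payoff is that $\mathsf{S}_n$ avoids the empirical resolvent entirely (hence no interaction between $\mathcal{A}_n$ and the expectation), while $\mathsf{R}_n$ gains an extra $1/\sqrt{n}$ and becomes negligible. Your single-step bound is shorter and in fact recovers the same $k_n^{5/2}(\log k_n)^{1/2}/\sqrt{n}$ rate as the paper's $\mathsf{S}_n$ term before the $\sqrt{k_n\log k_n}$ multiplication, already sharper than the $k_n^4\log k_n/\sqrt{n}$ target you set. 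Your explicit treatment of the tail eigenvalues $\hat\lambda_l$, $l>k_n$, via Lidskii and Assumption~\ref{as:ConditionE} is also more careful than the paper's own justification at \eqref{pfeq:RoseResolventNormEventAn}, which asserts $\mathrm{dist}(z,\sigma(\Gamma_n))=|z-\hat\lambda_j|$ without arguing why no other $\hat\lambda_l$ could be nearer.
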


\begin{proof} 
	Firstly using Lemma~\ref{lem:ApproxIntegrationEmpiricalCn}, we have 
	\begin{noobs} 
		Given operators $A, B$ and $z \in \rho(A) \cap \rho(B)$ and where $\rho(A)$ is the resolvent set of $A$, we have the \emph{second resolvent identity}, 
		\begin{equation*}
			R(z ; A) - R(z ; B) = R(z ; A) (B - A) R(z ; B) 
		\end{equation*}
	\end{noobs} 
	\begin{align*}
		&\widehat{\Pi}_{k_n} - \Pi_{k_n} \nonumber \\
		&\equiv \frac{1}{2\pi\im}\sum_{j = 1}^{k_n} \left[ \int_{\widehat{\mathcal{B}}_j} (z - \Gamma_n)^{-1} \diff{z} - \int_{\mathcal{B}_j} (z - \Gamma)^{-1} \diff{z} \right] \nonumber \\ 
		&= \frac{1}{2\pi\im}\sum_{j = 1}^{k_n} \left[ \ind_{\mathcal{A}_n} \int_{\mathcal{B}_j} (z - \Gamma_n)^{-1} \diff{z} - ( \ind_{\mathcal{A}_n} + \ind_{\mathcal{A}_n^c} ) \int_{\mathcal{B}_j} (z - \Gamma)^{-1} \diff{z} \right] + r_n \nonumber \\ 
		&= \ind_{\mathcal{A}_n} \frac{1}{2\pi\im}\sum_{j = 1}^{k_n}  \int_{\mathcal{B}_j} \left[ (z - \Gamma_n)^{-1}  - (z - \Gamma)^{-1} \right] \diff{z}  \nonumber 
		- \ind_{\mathcal{A}_n^c} \frac{1}{2\pi\im} \sum_{j= 1}^{k_n} \int_{\mathcal{B}_j} (z - \Gamma)^{-1} \diff{z}  + r_n \nonumber \\ 
	\end{align*}
	By the resolvent identity, and this is feasible only because we are on the event $\mathcal{A}_n$,  
	\begin{align*}
		&\ind_{\mathcal{A}_n} \frac{1}{2\pi\im} \sum_{j = 1}^{k_n} \int_{\mathcal{B}_j} \left[ (z - \Gamma_n)^{-1} - (z - \Gamma)^{-1} \right] \diff{z} \\ 
		&= \ind_{\mathcal{A}_n} \frac{1}{2\pi\im} \sum_{j = 1}^{k_n} \int_{\mathcal{B}_j} (z - \Gamma_n)^{-1} (\Gamma_n - \Gamma) (z - \Gamma)^{-1} \diff{z} 
	\end{align*}
	Using the resolvent identity again, we can decompose 
	\begin{equation}
		\ind_{\mathcal{A}_n} \frac{1}{2\pi\im} \sum_{j = 1}^{k_n} \int_{\mathcal{B}_j} (z - \Gamma_n)^{-1} (\Gamma_n - \Gamma) (z - \Gamma)^{-1} \diff{z} 
		=: \mathsf{S}_n + \mathsf{R_n} 
		\label{pfeq:YellowDecomposition} 
	\end{equation}
	where we define, 
	\begin{subequations}
		\begin{align} 
			\mathsf{S}_n &:= \ind_{\mathcal{A}_n} \frac{1}{2\pi\im} \sum_{j = 1}^{k_n} \int_{\mathcal{B}_j} (z - \Gamma)^{-1} (\Gamma_n - \Gamma) (z - \Gamma)^{-1} \diff{z} \\ 
			\mathsf{R}_n &:= \ind_{\mathcal{A}_n} \frac{1}{2\pi\im} \sum_{j = 1}^{k_n} \int_{\mathcal{B}_j} (z - \Gamma)^{-1} (\Gamma_n - \Gamma) (z - \Gamma)^{-1} (\Gamma_n - \Gamma) (z - \Gamma_n)^{-1} \diff{z} 
		\end{align} 
		\label{pfeq:YellowDecompositionTermsSnAndRn} 
	\end{subequations}
	Thus, the above equation can be rewritten as, 
	\begin{equation}
		\widehat{\Pi}_{k_n} - \Pi_{k_n} 
		= \mathsf{S}_n + \mathsf{R}_n - \ind_{\mathcal{A}_n^c} \frac{1}{2\pi\im} \sum_{j= 1}^{k_n} \int_{\mathcal{B}_j} (z - \Gamma)^{-1} \diff{z}  + r_n
	\end{equation}
	By the triangle inequality and Cauchy-Schwartz inequality, 
	\begin{align}
		\sup_{h \in \OptDomain} \absBIG{ \inner{\mathcal{Y}_n}{ \frac{h}{t_n(h)} } } 
		&\le  \sup \absBIG{\inner{\mathsf{S}_n\rho}{\frac{h}{t_n(h)}}} 
			+ \sup \absBIG{ \inner{\mathsf{R}_n\rho}{ \frac{h}{t_n(h)}} } \nonumber \\ 
		&\quad  + \ind_{\mathcal{A}_n^c} \frac{1}{2\pi} \sum_{j = 1}^{k_n} \int_{\mathcal{B}_j} \sup \absBIG{ \inner{(z - \Gamma)^{-1}\rho}{ \frac{h}{t_n(h)} } } \diff{z} 
			+ \sup \absBIG{ \inner{r_n \rho}{ \frac{h}{t_n(h)} } }   
		\label{pfeq:YellowYnDecomposition} 
	\end{align}

	We will individually bound the four terms on the right hand side of \eqref{pfeq:YellowYnDecomposition}. Let's first discuss those last two remaining terms. By again Cauchy-Schwartz inequality and Lemma~\ref{lem:ApproxIntegrationEmpiricalCn}, 
	\begin{equation*}
		\sup_{h \in \OptDomain} \absBIG{ \inner{r_n \rho}{ \frac{h}{t_n(h)} }  } 
		\le \norm{\rho} \sup\normBIG{ \frac{h}{t_n(h)} } \norm{r_n} 
	\end{equation*}
	The $\norm{r_n}$ term is bounded by Lemma~\ref{lem:ApproxIntegrationEmpiricalCn}. 

	For the third integral expression, by Cauchy-Schwartz inequality again  
	\begin{align}
		&\ind_{\mathcal{A}_n^c} \frac{1}{2\pi} \sum_{j = 1}^{k_n} \int_{\mathcal{B}_j} \sup_{h \in \OptDomain} \absBIG{ \inner{(z - \Gamma)^{-1}\rho}{ \frac{h}{t_n(h)} } } \diff{z}  \nonumber  \\
		&\le \norm{\rho} \sup\normBIG{ \frac{h}{t_n(h)} }  \ind_{\mathcal{A}_n^c} \frac{1}{2\pi} k_n \max_{j = 1, \ldots, k_n} \sup_{z \in \mathcal{B}_j} \norm{(z - \Gamma)^{-1} } \mathrm{diam}(\mathcal{B}_j) \nonumber \\ 
		&< \norm{\rho} \sup\normBIG{ \frac{h}{t_n(h)} } \frac{1}{\pi} \ind_{\mathcal{A}_n^c} k_n \nonumber \\
		&\lesssim \sup\normBIG{ \frac{h}{t_n(h)} }  \BigOhPee\left( \frac{k_n^2 \log k_n}{\sqrt{n}}  \right) k_n 
		\label{pfeq:YellowRemainderIntegralTerm} 
	\end{align}
	In particular, we used that for any $z \in \mathcal{B}_j$, 
	\begin{equation}
		\norm{(z - \Gamma)^{-1}} 
		\le \frac{1}{\mathrm{dist}(z, \sigma(\Gamma))} 
		= \frac{1}{\delta_j / 2} 
		\label{pfeq:YellowResolventNorm} 
	\end{equation}
	where $\sigma(\Gamma)$ denotes the spectrum of $\Gamma$. By the choice of radii $\delta_j / 2$'s that define the circles $\mathcal{B}_j$'s, any point $z \in \mathcal{B}_j$ is not an eigenvalue of $\Gamma$, which by definition implies $z$ is in the resolvent set of $\Gamma$. Hence the first inequality of \eqref{pfeq:YellowResolventNorm} follows from standard results on the norm of a resolvent (e.g.\ \cite[Proposition VII.3.9]{conway1994course}). The equality $\mathrm{dist}(z, \sigma(\Gamma)) = \delta_j / 2$ in \eqref{pfeq:YellowResolventNorm} follows immediately by again the definition of $\mathcal{B}_j$. 

	In addition $\mathrm{diam}(\mathcal{B}_j) = \delta_j$, and that $\Prob(\mathcal{A}_n^c) \lesssim \frac{k_n^2 \log k_n}{\sqrt{n}}$ from the proof of Lemma~\ref{lem:ApproxIntegrationEmpiricalCn}. Thus by Lemma~\ref{lem:SupNormalizedVec}, the two remainder terms of \eqref{pfeq:YellowYnDecomposition} are of order, 
	\begin{align}
		\sup \normBIG{ \frac{h}{t_n(h)} } \BigOhPee\left( \frac{k_n^3 \log k_n}{\sqrt{n}} + \frac{k_n^2 \log k_n}{\sqrt{n}}  \right)   
		&= \BigOh( \sqrt{k_n \log k_n} ) \BigOhPee\left( \frac{k_n^3 \log k_n}{\sqrt{n}} + \frac{k_n^2 \log k_n}{\sqrt{n}}  \right) \nonumber \\
		&= \BigOhPee\left( \frac{k_n^{7/2} (\log k_n)^{3/2} }{\sqrt{n}}  \right) 
		\label{pfeq:YellowRemainderProbCvgRate} 
	\end{align}
	Now we turn to bounding the $\mathsf{S}_n$ term in \eqref{pfeq:YellowYnDecomposition}. 

	\underline{The $\mathsf{S}_n$ term:}
	\footnote{
		It is worth noting our discussions of this $\mathsf{S}_n$ term is substantially different than that of \cite[Proposition 2]{cardot2007clt}. In particular, while the integral of the $j$th summand of $\mathsf{S}_n$ has an explicit form due to \cite{dauxois1982asymptotic}, but for our purposes of obtaining moment bounds, knowing this explicit form is unnecessary. In contrast to our purposes, the desired computation of the predicted value $\E[ \inner{\mathsf{S}_n \rho}{X_{n + 1}}^2]$ in \cite[Proposition 2]{cardot2007clt} gives them an extra smoothing property which can take advantage of the explicit form of $\mathsf{S}_n$. Indeed, an earlier draft of this paper uses analogous proofs methods of this $\mathsf{S}_n$ term as the authors and we arrive at the same rate in \eqref{pfeq:YellowSnProbBound}. 
	}
	By triangle inequality, 
	\begin{align}
		\norm{\mathsf{S}_n} 
		&\le \frac{1}{2\pi} \sum_{j = 1}^{k_n} \int_{\mathcal{B}_j} \norm{ (z - \Gamma)^{-1} (\Gamma_n - \Gamma) } \,\norm{(z - \Gamma)^{-1}} \diff{z} 
		\label{pfeq:YellowSnNormTriangleIneq} 
	\end{align}
	Firstly, we have the bound $\sup_{z \in \mathcal{B}_j} \norm{(z - \Gamma)^{-1}} < 2 / \delta_j$ again by \eqref{pfeq:YellowResolventNorm}. Thus by Lemma~\ref{lem:ResolventGammaEmpiricalGammaApprox}, we have in all 
	\begin{align*}
		\E[ \norm{\mathcal{S}_n} ] 
		&\lesssim \sum_{j = 1}^{k_n} \sup_{z \in \mathcal{B}_j} \E[ \norm{(z - \Gamma)^{-1} (\Gamma_n - \Gamma)} ] \,\sup_{z \in \mathcal{B}_j} \norm{(z - \Gamma)^{-1}} \,\mathrm{diam}(\mathcal{B}_j) \\
		&\lesssim \sum_{j = 1}^{k_n} \sqrt{ \frac{j^3 \log j}{n} } \cdot \frac{2}{\delta_j} \cdot \delta_j \\ 
		&\lesssim \frac{ k_n^{5/2} (\log k_n)^{1/2} }{\sqrt{n} } 
	\end{align*}

	By Markov's and Jensen's inequality and Lemma~\ref{lem:SupNormalizedVec} 
	\begin{align}
		\sup_{h \in \OptDomain} \absBIG{ \inner{\mathsf{S}_n \rho}{x} } 
		\lesssim \sup_{h \in \OptDomain} \normBIG{ \frac{h}{t_n(h)} }  \BigOhPee \left(  \frac{k_n^{5/2} (\log k_n)^{1/2}}{\sqrt{n}} \right) 
		= \BigOhPee \left(  \frac{k_n^{3} \log k_n}{\sqrt{n}} \right) 
		\label{pfeq:YellowSnProbBound} 
	\end{align} 
	This completes the discussion of the $\mathsf{S}_n$ term. 

	\underline{The $\mathsf{R}_n$ term:} We first define for $j = 1, \ldots, k_n$, 
	\begin{equation}
		\mathsf{T}_{j,n} := \ind_{\mathcal{A}_n} \int_{\mathcal{B}_j} (z - \Gamma)^{-1} (\Gamma_n - \Gamma) (z - \Gamma)^{-1} (\Gamma_n - \Gamma) (z - \Gamma_n)^{-1} \diff{z} 
		\label{pfeq:RoseTjn} 
	\end{equation}
	so that we can write 
	\begin{equation*}
		\mathsf{R}_n = \frac{1}{2\pi\im} \sum_{j = 1}^{k_n} \mathsf{T}_{j,n} 
	\end{equation*}
	By again the Cauchy-Schwartz inequality, we have 
	\begin{align}
		&\sup_{h \in \OptDomain} \absBIG{ \inner{\mathsf{T}_{j,n}\rho}{ \frac{h}{t_n(h)} } } \nonumber \\ 
		&\le \norm{\rho} \sup\normBIG{ \frac{h}{t_n(h)} } \int_{\mathcal{B}_j} \norm{(z - \Gamma)^{-1}(\Gamma_n - \Gamma)(z - \Gamma)^{-1}(\Gamma_n - \Gamma)(z - \Gamma_n)^{-1} } \ind_{\mathcal{A}_n}  \diff{z} \nonumber \\ 
		&\le \norm{\rho} \sup\normBIG{ \frac{h}{t_n(h)} }  \int_{\mathcal{B}_j} \norm{(z - \Gamma)^{-1}(\Gamma_n - \Gamma)}^2 \,\norm{ (z - \Gamma_n)^{-1} } \ind_{\mathcal{A}_n}  \diff{z} 
		\label{pfeq:RoseTjnAbsVal} 
	\end{align}
	\begin{noobs}
		\begin{align*}
		&\abs{ \inner{ \mathsf{T}_{j,n}\rho}{x} } \\ 
		&\le \int_{\mathcal{B}_j} \absBIG{ \inner{ (z - \Gamma)^{-1}(\Gamma_n - \Gamma)(z - \Gamma)^{-1}(\Gamma_n - \Gamma)(z - \Gamma_n)^{-1}\rho }{x}  } \diff{z}  \\ 
		&\le \norm{x} \int_{\mathcal{B}_j} \norm{ (z - \Gamma)^{-1}(\Gamma_n - \Gamma)(z - \Gamma)^{-1}(\Gamma_n - \Gamma)(z - \Gamma_n)^{-1}\rho  } \diff{z}  
		\end{align*} 
	\end{noobs} 

	Recall from Remark~\ref{rem:EmpiricalCovResolvent} that $z \in \mathcal{B}_j$ is also in the resolvent set of $\Gamma_n$. So we have $\mathrm{dist}(z, \sigma(\Gamma_n)) = \abs{z - \hat{\lambda}_j} \ge \abs{z - \lambda_j} - \abs{\hat{\lambda}_j - \lambda_j} = \delta_j / 2 - \delta_j / 4 = \delta_j / 2$. By the same arguments for \eqref{pfeq:YellowResolventNorm}, we have 
	\begin{equation}
		\norm{ (z - \Gamma_n)^{-1} } \ind_{\mathcal{A}_n} 
		\le \frac{1}{ \mathrm{dist}( z, \sigma(\Gamma_n) ) }  \ind_{\mathcal{A}_n} 
		\le \frac{1}{\delta_j / 2} \ind_{\mathcal{A}_n} 
		\le \frac{2}{\delta_j} 
		\lesssim j \log j 
		\label{pfeq:RoseResolventNormEventAn} 
	\end{equation}

	\begin{noobs}
		Consider any nonnegative random variable $X$ and any event $A$. Fix $\eta \in (0, 1)$. Then we have 
		\begin{align*}
			&\Prob( X \ind_A > \eta ) \\
			&= \underbrace{\Prob( X \ind_A > \eta \,| \ind_A > \eta)}_{\le 1} \Prob( \ind_A > \eta) + \underbrace{ \Prob( X \ind_A > \eta \,| \ind_A \le \eta) }_{=0} \Prob(\ind_A \le \eta) \\ 
			&\le \Prob(\ind_A > \eta) \\ 
			&= \Prob(A) 
		\end{align*}
		where in the second equality we use that $\{ \ind_A \le \eta \} = \{ \ind_A = 0 \}$, which implies $\{ X \ind_A > \eta \} = \{ X \cdot 0 > \eta \} = \{ 0 > \eta \} = \varnothing$. 
	\end{noobs}

	Consider the expectation and use Lemma~\ref{lem:ResolventGammaEmpiricalGammaApprox}, 
	\begin{align*}
		\E\left[ \int_{\mathcal{B}_j} \norm{ (z - \Gamma)^{-1} (\Gamma_n - \Gamma) }^2 \diff{z}\right]  
		&= \int_{\mathcal{B}_j} \E\left[ \norm{ (z - \Gamma)^{-1} (\Gamma_n - \Gamma) }^2 \right] \diff{z}  \nonumber \\ 
		&\lesssim \frac{j^3 \log j}{n} \mathrm{diam}(\mathcal{B}_j) \nonumber \\ 
		&= \frac{j^3 \log j}{n} \delta_j \nonumber \\ 
		&\lesssim \frac{j^{3} \log j}{n} \frac{1}{j \log j} \nonumber \\ 
		&= \frac{j^2}{n} 
	\end{align*} 
	By Markov's inequality, we thus have that 
	\begin{equation}
		\int_{\mathcal{B}_j} \norm{ (z - \Gamma)^{-1} (\Gamma_n - \Gamma) }^2 \diff{z} 
		= \BigOhPee \left( \frac{j^2}{n} \right) 
		\label{pfeq:RoseResolventGammaEmpiricalGammaProbBound} 
	\end{equation}
	Putting \eqref{pfeq:RoseResolventGammaEmpiricalGammaProbBound} and \eqref{pfeq:RoseResolventNormEventAn} together we have
	\begin{equation*}
		\int_{\mathcal{B}_j} \norm{ (z - \Gamma)^{-1} (\Gamma_n - \Gamma) }^2 \,\norm{(z - \Gamma_n)^{-1}} \ind_{\mathcal{A}_n} \diff{z}   
		\lesssim \BigOhPee \left( (j \log j)  \frac{j^2}{n}  \right) 
		= \BigOhPee \left( \frac{j^3 \log j}{n}  \right) 
	\end{equation*}
	So by Lemma~\ref{lem:SupNormalizedVec}, 
	\begin{align}
		\sup_{h \in \OptDomain} \absBIG{ \inner{\mathsf{R}_n}{ \frac{h}{t_n(h)} } } \nonumber 
		&\lesssim \sup_{h \in \OptDomain} \normBIG{ \frac{h}{t_n(h)} }  \BigOhPee \left( \frac{1}{n}\sum_{j = 1}^{k_n} j^3 \log j  \right) \nonumber  \\ 
		&\lesssim \BigOh( \sqrt{k_n \log k_n} ) \BigOhPee \left( \frac{ k_n^4 \log k_n }{n} \right) \nonumber \\ 
		&= \BigOhPee \left( \frac{ k_n^{9/2} (\log k_n)^{3/2} }{n} \right)  
		\label{pfeq:RoseRnProbBound} 
	\end{align} 
	This completes the proof of the $\mathsf{R}_n$ term.

	\underline{Summary:} Now we can finally put everything together. Putting \eqref{pfeq:YellowSnProbBound}, \eqref{pfeq:RoseRnProbBound} and \eqref{pfeq:YellowRemainderProbCvgRate} back into \eqref{pfeq:YellowYnDecomposition},  
	\begin{align*}
		\sup_{h \in \OptDomain} \absBIG{ \inner{\mathcal{Y}_n}{\frac{h}{t_n(h)}} }  
		&\lesssim \BigOhPee\left(  \frac{k_n^{3} \log k_n}{\sqrt{n}} \right) 
		+ \BigOhPee\left( \frac{k_n^{9/2} (\log k_n)^{3/2}}{n} \right) 
		+ \BigOhPee\left( \frac{k_n^{7/2} (\log k_n)^{3/2}}{\sqrt{n}} \right) 
	\end{align*}
	This completes the proof.

\end{proof}

\begin{prop} 
	For sufficiently large $n$, 
	\label{prop:TermSnGoesToZero} 
	\begin{align*}
		\sup_{h \in \OptDomain} \absBIG{ \inner{\mathcal{S}_n}{ \frac{h}{t_n(h)} } } 
		&\lesssim \BigOhPee\left( \frac{k_n^{11/2} (\log k_n)^{3/2}}{\sqrt{n}} \right)
	\end{align*}
\end{prop}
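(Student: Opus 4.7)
The plan is to mirror the proof of Proposition~\ref{prop:TermYnGoesToZero} with two new ingredients to track. First, the operator $\Gamma_n^\dagger - \Gamma^\dagger$ carries a weight $f_n(z)$ which on the contour $\mathcal{B}_j$ satisfies $|f_n(z)| \lesssim 1/\lambda_j \lesssim j\log j$ by Assumptions~\ref{as:ConditionF} and~\ref{as:ConditionA}. Second, the operator now acts on $U_n$ rather than on the fixed vector $\rho$; the identity $\E\norm{U_n}^2 = \sigma_\varepsilon^2 \, \mathrm{tr}(\Gamma)/n$ combined with Markov's inequality gives $\norm{U_n} = \BigOhPee(n^{-1/2})$.

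First I would apply Lemma~\ref{lem:ApproxIntegrationEmpiricalCn} with $f = f_n$ to replace the random integration contour $\widehat{\mathcal{C}}_n$ in the definition of $\Gamma_n^\dagger$ by the deterministic $\mathcal{C}_n$, at the cost of an error operator $r_n$ with $\norm{r_n} = \BigOhPee(k_n^2 \log k_n / \sqrt{n})$. On the event $\mathcal{A}_n$, a double application of the second resolvent identity (exactly as in the $\mathcal{Y}_n$ proof) yields the decomposition
\begin{equation*}
(\Gamma_n^\dagger - \Gamma^\dagger)\,U_n = \mathsf{S}_n'\,U_n + \mathsf{R}_n'\,U_n - \ind_{\mathcal{A}_n^c}\Gamma^\dagger U_n + r_n U_n,
\end{equation*}
where $\mathsf{S}_n'$ and $\mathsf{R}_n'$ are the analogues of the operators in \eqref{pfeq:YellowDecompositionTermsSnAndRn} with the extra factor $f_n(z)$ in the integrand.

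Next, by Cauchy-Schwarz and Lemma~\ref{lem:SupNormalizedVec}(ii),
\begin{equation*}
\sup_{h \in \OptDomain} \absBIG{\inner{\mathcal{S}_n}{h/t_n(h)}} \le \BigOh(\sqrt{k_n \log k_n})\,\norm{U_n}\,\bigl( \norm{\mathsf{S}_n'} + \norm{\mathsf{R}_n'} + \ind_{\mathcal{A}_n^c}\norm{\Gamma^\dagger} + \norm{r_n} \bigr).
\end{equation*}
Each operator-norm is bounded exactly as in Proposition~\ref{prop:TermYnGoesToZero}, via triangle inequality, Lemma~\ref{lem:ResolventGammaEmpiricalGammaApprox}, and the resolvent-norm estimates $\norm{(z-\Gamma)^{-1}} \le 2/\delta_j$ and (on $\mathcal{A}_n$) $\norm{(z - \Gamma_n)^{-1}} \le 2/\delta_j$. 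The only structurally new ingredient is the extra factor $|f_n(z)| \lesssim j \log j$ on $\mathcal{B}_j$, which contributes an additional $\BigOh(k_n \log k_n)$ to each per-$j$ rate from the $\mathcal{Y}_n$ analysis; summing up to $k_n$ and combining with $\norm{U_n} = \BigOhPee(n^{-1/2})$ then dominates all four summands by $\BigOhPee(k_n^{11/2}(\log k_n)^{3/2}/\sqrt{n})$.

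The chief obstacle is the piece $\ind_{\mathcal{A}_n^c}\Gamma^\dagger U_n$: since the indicator $\ind_{\mathcal{A}_n^c}$ is not independent of $U_n$, one cannot simply split the expectation into the product $\Prob(\mathcal{A}_n^c)\cdot\E\norm{\Gamma^\dagger U_n}$. The workaround is to apply Cauchy-Schwarz on the expectation, $\E[\ind_{\mathcal{A}_n^c}\norm{\Gamma^\dagger U_n}] \le \sqrt{\Prob(\mathcal{A}_n^c)}\sqrt{\E\norm{\Gamma^\dagger U_n}^2}$, and then use the trace identity $\E\norm{\Gamma^\dagger U_n}^2 = (\sigma_\varepsilon^2/n)\sum_{j=1}^{k_n}\lambda_j f_n(\lambda_j)^2 \lesssim k_n^2 \log k_n / n$ together with $\Prob(\mathcal{A}_n^c) \lesssim k_n^2 \log k_n / \sqrt{n}$ from the proof of Lemma~\ref{lem:ApproxIntegrationEmpiricalCn}. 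The remaining work is a routine but careful re-run of the rate bookkeeping from Proposition~\ref{prop:TermYnGoesToZero}, with the single additional factor $1/\lambda_j$ from $f_n$ propagated through each summand.
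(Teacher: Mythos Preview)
Your approach is correct and in fact delivers the stated bound with room to spare, but it differs from the paper's in two respects worth noting. First, you use the sharper estimate $\norm{U_n} = \BigOhPee(n^{-1/2})$ (which follows from $\E\norm{U_n}^2 = \sigma_\varepsilon^2\,\mathrm{tr}(\Gamma)/n$), whereas the paper only invokes the crude $\norm{U_n} = \BigOhPee(1)$. Second, and relatedly, for the $\mathsf{S}_n'$--type term the paper does \emph{not} factor $\norm{(z-\Gamma)^{-1}}\,\norm{U_n}$ as you do; instead it keeps $(z-\Gamma)^{-1}U_n$ together and computes $\E\norm{(z-\Gamma)^{-1}U_n}^2 \lesssim j^3\log j / n$ via the Hilbert--Schmidt norm (their ``Term (ii)''). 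This extra ingredient compensates for their looser $\norm{U_n}$ bound. Your route is more economical: you simply re-run the $\mathcal{Y}_n$ operator-norm estimates with the additional $|f_n(z)| \lesssim j\log j$ factor, multiply by $\norm{U_n} = \BigOhPee(n^{-1/2})$, and avoid the separate Hilbert--Schmidt computation entirely. Both approaches reach the target rate; yours actually gives something strictly smaller.

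One remark: the ``chief obstacle'' you flag---the dependence between $\ind_{\mathcal{A}_n^c}$ and $U_n$---is not really an obstacle. Products of $\BigOhPee$ sequences are $\BigOhPee$ of the product regardless of dependence, so $\ind_{\mathcal{A}_n^c}\norm{\Gamma^\dagger}\norm{U_n} = \BigOhPee(\Prob(\mathcal{A}_n^c))\cdot\BigOh(k_n\log k_n)\cdot\BigOhPee(n^{-1/2})$ directly, with no need for the Cauchy--Schwarz workaround (though that workaround is also valid). The paper handles this term the same way, simply multiplying the $\BigOhPee$ rates.
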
 

\begin{proof}
	The proof of this result closely follows the development of the proof of Proposition~\ref{prop:TermYnGoesToZero}. By an entirely analogous arguments leading up to \eqref{pfeq:YellowDecomposition}, we obtain the decomposition 
	\begin{align}
		&\sup_{h \in \OptDomain} \absBIG{ \inner{\mathcal{S}_n}{\frac{h}{t_n(h)}} } \nonumber \\ 
		&\le \sup_{h \in \OptDomain} \normBIG{ \frac{h}{t_n(h)}  } \Bigg( \ind_{\mathcal{A}_n} \frac{1}{2\pi\im}\sum_{j = 1}^{k_n} \int_{\mathcal{B}_j} \abs{f_n(z)} \,\norm{(z - \Gamma)^{-1}(\Gamma_n - \Gamma)} \,\norm{(z - \Gamma)^{-1}U_n} \diff{z} \nonumber \\ 
		&\quad\quad + \ind_{\mathcal{A}_n} \frac{1}{2\pi\im} \sum_{j = 1}^{k_n} \int_{\mathcal{B}_j} \abs{f_n(z)} \,\norm{(z - \Gamma)^{-1}(\Gamma_n - \Gamma)}^2 \,\norm{(z - \Gamma_n)^{-1}} \,\norm{U_n} \diff{z} \nonumber \\
		&\quad\quad + \ind_{\mathcal{A}_n^c} \frac{1}{2\pi\im} \sum_{j = 1}^{k_n} \int_{\mathcal{B}_j} \abs{f_n(z)} \,\norm{(z - \Gamma)^{-1}} \,\norm{U_n} \diff{z} + \norm{r_n}\,\norm{U_n} \Bigg) 
		\label{pfeq:SunshineSnDecomposition} 
	\end{align}
	\begin{noobs}
		To see \eqref{pfeq:SunshineSnDecomposition}, 
	\begin{align*}
		\Gamma_n^\dagger - \Gamma^\dagger \nonumber 
			&\equiv \frac{1}{2\pi\im} \int_{\widehat{\mathcal{C}}_n} f_n(z) (z - \Gamma_n)^{-1} \diff{z} - \frac{1}{2\pi\im} \int_{\mathcal{C}_n} f_n(z) (z - \Gamma)^{-1} \diff{z} \nonumber \\ 
		&= \ind_{\mathcal{A}_n} \frac{1}{2\pi\im} \sum_{j = 1}^{k_n} \int_{\mathcal{B}_j} f_n(z)\left[ (z - \Gamma_n)^{-1} - (z - \Gamma)^{-1}  \right] \diff{z} \nonumber \\
			&\quad - \ind_{\mathcal{A}_n^c} \sum_{j = 1}^{k_n} \int_{\mathcal{B}_j} f_n(z) (z - \Gamma)^{-1} \diff{z} + r_n  \nonumber \\ 
		&= \ind_{\mathcal{A}_n} \frac{1}{2\pi\im} \sum_{j = 1}^{k_n} \int_{\mathcal{B}_j} f_n(z) (z - \Gamma_n)^{-1} (\Gamma_n - \Gamma) (z - \Gamma)^{-1} \diff{z} \nonumber \\
			&\quad - \ind_{\mathcal{A}_n^c} \sum_{j = 1}^{k_n} \int_{\mathcal{B}_j} f_n(z) (z - \Gamma)^{-1} \diff{z} + r_n \nonumber \\ 
		&= \ind_{\mathcal{A}_n} \frac{1}{2\pi\im} \sum_{j = 1}^{k_n} \int_{\mathcal{B}_j} f_n(z) (z - \Gamma_n)^{-1} (z - \Gamma) (z - \Gamma)^{-1} (\Gamma_n - \Gamma) (z - \Gamma)^{-1} \diff{z} \nonumber \\
			&\quad - \ind_{\mathcal{A}_n^c} \sum_{j = 1}^{k_n} \int_{\mathcal{B}_j} f_n(z) (z - \Gamma)^{-1} \diff{z} + r_n \nonumber \\ 
			&= \ind_{\mathcal{A}_n} \frac{1}{2\pi\im} \sum_{j = 1}^{k_n} \int_{\mathcal{B}_j} f_n(z) (z - \Gamma)^{-1}(\Gamma_n - \Gamma)(z - \Gamma)^{-1} \diff{z} \nonumber \\
			&\quad + \ind_{\mathcal{A}_n} \frac{1}{2\pi\im} \int_{\mathcal{B}_j} f_n(z) (z - \Gamma)^{-1}(\Gamma_n - \Gamma)(z - \Gamma)^{-1}(\Gamma_n - \Gamma)(z - \Gamma_n)^{-1} \diff{z} \nonumber \\ 
			&\quad - \ind_{\mathcal{A}_n^c} \sum_{j = 1}^{k_n} \int_{\mathcal{B}_j} f_n(z) (z - \Gamma)^{-1} \diff{z} + r_n 
	\end{align*}
	Now multiply by $U_n$, apply the inner product with $x$, and apply Cauchy-Schwartz inequality.  
	\end{noobs}

	Firstly, let's see that $U_n = \BigOhPee(1)$. Since $\norm{U_n} \le \frac{1}{n} \sum_{i = 1}^n \norm{X_i} \,\abs{\varepsilon_i}$, taking expectations and applying Jensen's inequality, we have $\E[\norm{U_n}] \le C$. By Markov's inequality, this implies 
	\begin{equation}
		\norm{U_n} = \BigOhPee(1) 
		\label{pfeq:SunshineUnProbBound} 
	\end{equation}
	\begin{noobs}
		To see \eqref{pfeq:SunshineUnProbBound}, 
	\begin{align*}
		\E[\norm{U_n}] 
		&\le \frac{1}{n} \sum_{i = 1}^n \E\left[ \norm{X_i} \, \abs{\varepsilon_i} \right] \\ 
		&\le \frac{1}{n} \sum_{i = 1}^n \sqrt{ \E\left[ \norm{X_i}^2 \varepsilon_i^2  \right]  } && \text{Jensen's inequality} \\ 
		&= \frac{1}{n} \sum_{i = 1}^n \sqrt{ \E\left[ \E[ \norm{X_i}^2 \varepsilon_i^2 \,|\, X_i ]  \right]  } \\ 
		&= \frac{1}{n} \sum_{i = 1}^n \sqrt{ \E\left[ \norm{X_i}^2 \E[ \varepsilon_i^2 \,|\, X_i ]  \right]  } \\ 
		&= \frac{1}{n} \sum_{i = 1}^n \sqrt{ \E\left[ \norm{X_i}^2 \sigma_\varepsilon^2  \right]  } && \text{Homosckedastic assumption}  \\ 
		&= \frac{1}{n} n \sigma_\varepsilon  \sqrt{ \E\left[ \norm{X_1}^2  \right]  } && \text{$X_i$ is iid}  \\ 
		&\le \sigma_\varepsilon  \sqrt{ \left( \E\left[ \norm{X_1}^4  \right] \right)^{2/4}  } && \text{Jensen's inequality}  \\ 
		&=: C < \infty 
	\end{align*}

	To see that $\norm{U_n} = \BigOhPee(1)$, let's just push definitions. Fix any $\epsilon > 0$. By Markov's inequality, 
	\begin{equation*}
		\Prob( \norm{U_n} > \epsilon ) 
		\le \frac{ \E[ \norm{U_n} ]}{\epsilon} 
		\le \frac{ C }{\epsilon} 
	\end{equation*}
	This just suggest we change variables and let $\eta = C / \epsilon$ and $M_\eta := C / \eta$. This implies for all $\eta > 0$, 
	\begin{equation*}
		\Prob( \norm{U_n} \le M_\eta) \le \eta 
	\end{equation*}
	which is exactly the definition that $\norm{U_n} = \BigOhPee(1)$. 
	\end{noobs}
	Combining \eqref{pfeq:SunshineUnProbBound} along with the analogous arguments of the last two expressions of \eqref{pfeq:YellowRemainderProbCvgRate} from Proposition~\ref{prop:TermYnGoesToZero}, we have that 
	\begin{align}
		\text{\parbox{4cm}{Last two expressions in parentheses of \eqref{pfeq:SunshineSnDecomposition} }} 
		&\lesssim \BigOhPee\left( \frac{k_n^3 \log k_n}{\sqrt{n}} \right) \BigOhPee(1) + \BigOhPee\left( \frac{k_n^2 \log k_n}{\sqrt{n}} \right) \BigOhPee(1) \nonumber \\ 
		&= \BigOhPee\left( \frac{k_n^3 \log k_n}{\sqrt{n}} \right) 
		\label{pfeq:SunshineLastTwoTermsProbBound} 
	\end{align}
	
	It thus suffices to concentrate the discussion on the first two expressions of \eqref{pfeq:SunshineSnDecomposition}. Thanks to the arguments from Proposition~\ref{prop:TermYnGoesToZero}, we have already handled the terms $\norm{(z - \Gamma_n)^{-1}} \ind_{\mathcal{A}_n}$ and $\int_{\mathcal{B}_j} \norm{ (z - \Gamma)^{-1}(\Gamma_n - \Gamma) } \diff{z}$. Thus it remains to discuss the terms:  (i) $\abs{f_n(z)}$ and (ii) $\norm{ (z - \Gamma)^{-1} U_n }$. 

	\textit{Term (i)}: By Condition~\ref{as:ConditionH}, we have that 
	\begin{equation}
		\sup_{z \in \mathcal{B}_j} \abs{f_n(z)} 
		\le \frac{1}{\delta_j}\left(1 + \frac{C}{\sqrt{n}} \right)  
		\lesssim (j \log j) \left( 1 + \frac{1}{\sqrt{n}} \right) 
		\label{pfeq:SunshineTermi} 
	\end{equation}
	\begin{noobs}
		To see \eqref{pfeq:SunshineTermi}, 
		\begin{align*}
			\sup_{z \in \mathcal{B}_j} \abs{f_n(z)} 
			&= \sup_{z \in \mathcal{B}_j} \absBIG{ \frac{1}{z} z f_n(z) } \\ 
			&\le \sup_{z \in \mathcal{B}_j} \frac{1}{\abs{z}} \sup_{z \in \mathcal{B}_j} \abs{z f_n(z)}  \\ 
			&= \frac{1}{\delta_j / 2} \sup_{z \in \mathcal{B}_j} \abs{z f_n(z)}  \\ 
			&= \frac{1}{\delta_j / 2} \sup_{z \in \mathcal{B}_j} \abs{(z f_n(z) - 1) + 1}  \\ 
			&\le \frac{2}{\delta_j} \left( \sup_{z \in \mathcal{B}_j} \abs{z f_n(z) - 1}  + 1 \right) \\ 
			&\le \frac{2}{\delta_j} \left( \frac{C}{\sqrt{n}}  + 1 \right) && \text{by Condition~\ref{as:ConditionH}} \\ 
			&\lesssim (j \log j) \left( 1 + \frac{1}{\sqrt{n}} \right) 
		\end{align*}
	\end{noobs}

	\textit{Term (ii)}: Fix any $z \in \mathcal{B}_j$. 
	\begin{noobs}
		While the decomposition \eqref{eq:RhoHatDecomposition} is correct and the definition of the operators in \eqref{eq:RhoHatDecompositionOperators} is appropriate, there's some subtlety in understanding the operator $\mathcal{S}_n$. In particular, how does one correct identify the domain and ranges of something like $(z - \Gamma)^{-1} U_n$? 

		Let's first think about $\mathcal{S}_n$ in more detail. It is clear that $\Gamma_n^\dagger - \Gamma \in \CompactOp \subseteq \BddOp$. But by construction of the tensor product $X_i \otimes \varepsilon_i$ which takes an arbitrary $h \in \HSpace$ and returns $(X_i \otimes \varepsilon_i)(h) = \inner{X_i}{h} \varepsilon_i \in \R$. That is, $U_n : \HSpace \to \R$ and moreover, by continuity and linearity we indeed have $U_n \in \HSpace^*$. Then as stated the object `` $(\Gamma_n - \Gamma)U_n$'' has a rather confusing domain and range: due to $U_n$ it should first take an element from $\HSpace$ and return $\R$; but $\Gamma_n - \Gamma$ eats elements from $\HSpace$ and returns $\HSpace$ and does not eat $\R$. Thus as defined, `` $(\Gamma_n - \Gamma)U_n$'' doesn't make much sense. A same reasoning goes to see that as stated ``$(z - \Gamma)^{-1} U_n$'' is confusing because the resolvent is also an operator. In other words, we need to make sense of what does it mean by a composition of an operator with a linear functional. 

		Here we should invoke the definitions of \cite[Chapter VI, \S 1]{conway1994course}. Suppose $\mathscr{X}, \mathscr{Y}$ are vector spaces and $T : \mathscr{X} \to \mathscr{Y}$ is a linear transformation. Let $\mathscr{Y}' := \text{all of the linear functionals of $\mathscr{Y} \to \Field$}$.
		\footnote{
			Note this $\mathscr{Y}'$ is not necessarily equivalent to $\mathscr{Y}^*$ because we didn't impose that $\mathscr{X}, \mathscr{Y}$ are Banach / Hilbert spaces. 
		}
		If $y' \in \mathscr{Y}'$, then $y' \circ T : \mathscr{X} \to \Field$ is easily seen to be a linear functional on $\mathscr{X}$. That is, $y' \circ T \in \mathscr{X}'$. This defines a map, 
		\begin{equation*}
			T' : \mathscr{Y}' \to \mathscr{X}'  
		\end{equation*}
		by $T'(y') := y' \circ T$. 

		Let's specialize the above discussions to our context. In particular, it is important to note that $\Gamma_n - \Gamma$ and $(z - \Gamma)^{-1}$ are both self-adjoint. The definition of this map means we should really read ``$(\Gamma_n - \Gamma) U_n = (\Gamma_n - \Gamma)^* U_n$'' as $U_n (\Gamma_n - \Gamma)$, and likewise we should read ``$(z - \Gamma)^{-1} U_n = ( (z - \Gamma)^{-1} )^* U_n$'' as $U_n (z - \Gamma)^{-1}$. 
	\end{noobs} 
	By definition of the adjoint of a linear operator and using the Hilbert-Schmidt norm, 
	\begin{align*}
		\norm{ (z - \Gamma)^{-1} U_n }^2 
		&= \norm{ U_n (z - \Gamma)^{-1} }^2 \\ 
		&\le \norm{ U_n (z - \Gamma)^{-1} }^2_{\textrm{HS}} \\ 
		&= \sum_{l = 1}^\infty \frac{1}{(z - \lambda_l)^2} \left[ \frac{1}{n^2} \sum_{i = 1}^n \inner{X_i}{e_l}^2 \varepsilon_i^2 + \frac{1}{n^2} \sum_{i \neq j}^n \inner{X_i}{e_l} \inner{X_j}{e_l} \varepsilon_i \varepsilon_j   \right] 
	\end{align*}
	Taking expectations and using the KL expansion, 
	\begin{align*}
		\E[ \norm{(z - \Gamma)^{-1} U_n}^2 ] 
		&\le  \frac{\sigma_\varepsilon^2}{n} \sum_{l = 1}^\infty \frac{\lambda_l}{(z - \lambda_l)^2} \\ 
		&= \frac{\sigma_\varepsilon^2}{n} \left( \frac{\lambda_j}{(z - \lambda_j)^2} + \sum_{l \neq j}^\infty \frac{\lambda_l}{(z - \lambda_l)^2} \right) \\ 
		&\le \frac{\sigma_\varepsilon^2}{n} \left( \frac{\lambda_j}{(\delta_j / 2)^2} + C \frac{j^2}{\lambda_j} \right) \\ 
		&\lesssim \frac{1}{n} \left( \frac{1}{j \log j} (j \log j)^2 + j^2 (j \log j) \right) \\ 
		&= \frac{1}{n} \left( j \log j + j^3 \log j \right) \\ 
		&\lesssim \frac{j^3 \log j}{n} 
	\end{align*}
	where the third line follows from \eqref{pfeq:BSESongEagle} in the proof of Proposition~\ref{prop:TermYnGoesToZero}. 
	\begin{noobs}
		To see the above calculations in some detail, 
	\begin{align*}
		\norm{ (z - \Gamma)^{-1} U_n }^2 
		&= \norm{ U_n (z - \Gamma)^{-1} }^2 \\ 
		&\le \norm{ U_n (z - \Gamma)^{-1} }^2_{\textrm{HS}} \\ 
		&= \sum_{l = 1}^\infty \norm{ U_n (z - \Gamma)^{-1} (e_l) }^2 \\ 
		&= \sum_{l = 1}^\infty \normBIG{ U_n \frac{1}{z - \lambda_l} e_l }^2 \\ 
		&= \sum_{l = 1}^\infty \frac{1}{(z - \lambda_l)^2} \norm{ U_n e_l }^2 \\ 
		&= \sum_{l = 1}^\infty \frac{1}{(z - \lambda_l)^2} \inner{ \frac{1}{n} \sum_{i = 1}^n \inner{X_i}{e_l} \varepsilon_i }{ \frac{1}{n} \sum_{j = 1}^n \inner{X_j}{e_l} \varepsilon_j }   \\ 
		&= \sum_{l = 1}^\infty \frac{1}{(z - \lambda_l)^2} \frac{1}{n^2} \sum_{i, j = 1}^n \inner{X_i}{e_l} \inner{X_j}{e_l} \varepsilon_i \varepsilon_j   \\ 
		&= \sum_{l = 1}^\infty \frac{1}{(z - \lambda_l)^2} \left[ \frac{1}{n^2} \sum_{i = 1}^n \inner{X_i}{e_l}^2 \varepsilon_i^2 + \frac{1}{n^2} \sum_{i \neq j}^n \inner{X_i}{e_l} \inner{X_j}{e_l} \varepsilon_i \varepsilon_j   \right]  
	\end{align*}
	Note that it is \textit{critical} that we had the discussions and understanding of an adjoint of a linear operator. Without the correct understanding that we should read ``$(z - \Gamma)^{-1} U_n$'' as $U_n (z - \Gamma)^{-1}$, it would be immediate that applying the Hilbert-Schmidt norm in the third line would entail the term ``$\norm{(z - \Gamma)^{-1} U_n(e_l)}$''. Following down this path effectively goes nowhere (because the composition itself makes no sense). Thus the correct interpretation is not only mathematically important but also facilitates for the correct bound. 

	In addition, the analogous calculation on pg 354 of \cite{cardot2007clt} makes no sense. Firstly, they display an equality but clearly an inequality is needed. Secondly, they use an object ``$(z - \Gamma)^{-1/2}$'' --- this is questionable. While we can define the resolvent operator $(z - \Gamma)^{-1}$, one needs to at least ensure that it is positive-definite to define its square-root. This was never shown in the paper and it is not true in general. Thus throughout our work, we neither need nor write anything of the form ``$[(z - \Gamma)^{-1}]^{1/2}$''. 
	\end{noobs} 
	\begin{noobs} 
	\begin{align*}
		&\E[ \norm{(z - \Gamma_n)^{-1} U_n}^2 ] \\ 
		&\le \sum_{l = 1}^\infty \frac{1}{(z - \lambda_l)^2} \left[ \frac{1}{n^2} \sum_{i = 1}^n \E[ \inner{X_i}{e_l}^2 \varepsilon_i^2]  + \frac{1}{n^2} \sum_{i \neq j}^n \E[ \inner{X_i}{e_l} \inner{X_j}{e_l} \varepsilon_i \varepsilon_j ]   \right]  \\ 
		&= \sum_{l = 1}^\infty \frac{1}{(z - \lambda_l)^2} \left[ \frac{1}{n^2} \sum_{i = 1}^n \E\left[ \E[ \inner{X_i}{e_l}^2 \varepsilon_i^2 \,|\, X_i ] \right]  
		+ \frac{1}{n^2} \sum_{i \neq j}^n \E\left[ \E[ \inner{X_i}{e_l} \inner{X_j}{e_l} \varepsilon_i \varepsilon_j \,|\, X_i, X_j ] \right]   \right]  \\ 
		&= \sum_{l = 1}^\infty \frac{1}{(z - \lambda_l)^2} \left[ \frac{1}{n^2} \sum_{i = 1}^n \E\left[ \inner{X_i}{e_l}^2 \underbrace{\E[  \varepsilon_i^2 \,|\, X_i ]}_{= \sigma_\varepsilon^2} \right]  
		+ \frac{1}{n^2} \sum_{i \neq j}^n \E\left[  \inner{X_i}{e_l} \inner{X_j}{e_l} \underbrace{\E[ \varepsilon_i \,|\, X_i ]}_{= 0} \underbrace{\E[ \varepsilon_j \,|\, X_j ]}_{=0} \right]   \right]  \\ 
		&=  \sum_{l = 1}^\infty \frac{1}{(z - \lambda_l)^2} \frac{\sigma_\varepsilon^2}{n} \sum_{i = 1}^n \E[ \inner{X_i}{e_l}^2 ] \\ 
		&=  \sum_{l = 1}^\infty \frac{1}{(z - \lambda_l)^2} \frac{\sigma_\varepsilon^2}{n} n \E[ \inner{X_1}{e_l}^2 ] \quad\quad\text{iid} \\ 
		&=  \frac{\sigma_\varepsilon^2}{n} \sum_{l = 1}^\infty \frac{1}{(z - \lambda_l)^2} \E\left[  \inner{\sum_{k = 1}^\infty \sqrt{\lambda_k} \xi_k e_k}{e_l}^2 \right] \quad\quad\text{KL expansion} \\ 
		&=  \frac{\sigma_\varepsilon^2}{n} \sum_{l = 1}^\infty \frac{1}{(z - \lambda_l)^2} \E\left[ \sum_{k = 1}^\infty \innersmall{ \sqrt{\lambda_k} \xi_k e_k}{e_l} \sum_{k' = 1}^\infty \innersmall{ \sqrt{\lambda_{k'}} \xi_{k'} e_{k'}}{e_l} \right]  \\ 
		&=  \frac{\sigma_\varepsilon^2}{n} \sum_{l = 1}^\infty \frac{1}{(z - \lambda_l)^2} \E\left[ \sum_{k = 1}^\infty \sqrt{\lambda_k} \xi_k \innersmall{e_k}{e_l} \sum_{k' = 1}^\infty \sqrt{\lambda_{k'}} \xi_{k'} \innersmall{ e_{k'}}{e_l} \right]  \\ 
		&=  \frac{\sigma_\varepsilon^2}{n} \sum_{l = 1}^\infty \frac{1}{(z - \lambda_l)^2} \E\left[ \sqrt{\lambda_l} \xi_l \sqrt{\lambda_l} \xi_l \right]  \quad\quad\text{orthogonality of eigenvectors} \\ 
		&=  \frac{\sigma_\varepsilon^2}{n} \sum_{l = 1}^\infty \frac{1}{(z - \lambda_l)^2} \lambda_l \underbrace{\E\left[  \xi_l^2  \right]}_{= 1}  \quad\quad\text{by construction of KL expansion} \\ 
		&=  \frac{\sigma_\varepsilon^2}{n} \sum_{l = 1}^\infty \frac{\lambda_l}{(z - \lambda_l)^2}
	\end{align*}
	\end{noobs}
	Thus by Chebyshev's inequality, it follows we have for all $z \in \mathcal{B}_j$, 
	\begin{equation}
		\norm{(z - \Gamma)^{-1} U_n} = \BigOhPee\left( \frac{j^{3/2} (\log j)^{1/2}}{\sqrt{n}} \right) 
		\label{pfeq:SunshineTermii} 
	\end{equation}

	Putting \eqref{pfeq:SunshineTermi} and \eqref{pfeq:SunshineTermii} together along with the already discussed terms from Proposition~\ref{prop:TermYnGoesToZero}, it follows 
	\begin{align}
		\text{\parbox{4cm}{The $j$th summand of the 1st expression in parentheses of \eqref{pfeq:SunshineSnDecomposition}} }
		&\lesssim 
		(j \log j) \left( 1 + \frac{1}{\sqrt{n}} \right) \cdot 
		\BigOhPee\left( \sqrt{\frac{j^2}{n}} \right) \cdot 
		\BigOhPee\left( \frac{j^{3/2} (\log j)^{1/2} }{ \sqrt{n} }  \right) \nonumber \\ 
		&\lesssim 
		\BigOhPee\left( \frac{j^{7/2} (\log j)^{3/2} }{ n } \right) 
		\label{pfeq:SunshineFirstExpressionProbBound} 
	\end{align}

	Let's now discuss the second expression of \eqref{pfeq:SunshineSnDecomposition}. Using \eqref{pfeq:SunshineTermi}, \eqref{pfeq:RoseResolventGammaEmpiricalGammaProbBound}, \eqref{pfeq:RoseResolventNormEventAn} and \eqref{pfeq:SunshineUnProbBound}, we have 
	\begin{align}
		\text{\parbox{4cm}{The $j$th summand of the 2nd expression in parentheses of \eqref{pfeq:SunshineSnDecomposition}} }
		&\lesssim 
		(j \log j) \left( 1 + \frac{1}{\sqrt{n}} \right) \cdot 
		\BigOhPee\left( \frac{j^2}{n} \right) \cdot 
		\mathcal{O}_{\mathrm{a.s.}}( j \log j) \cdot 
		\BigOhPee(1) \nonumber \\ 
		&\lesssim \BigOhPee\left( \frac{j^4 (\log j)^2}{n} \right) 
		\label{pfeq:SunshineSecondExpressionProbBound} 
	\end{align}

	Finally, summing \eqref{pfeq:SunshineFirstExpressionProbBound} and \eqref{pfeq:SunshineSecondExpressionProbBound}, using \eqref{pfeq:SunshineLastTwoTermsProbBound} in \eqref{pfeq:SunshineSnDecomposition} and using Lemma~\ref{lem:SupNormalizedVec}, 
	\begin{align*}
		&\sup_{h \in \OptDomain} \absBIG{ \inner{\mathcal{S}_n}{ \frac{h}{t_n(h)} } } \\ 
		&\lesssim \BigOh(\sqrt{k_n \log k_n}) \left( \BigOhPee\left( \frac{k_n^{9/2} (\log k_n)^{3/2}}{n} \right) + \BigOhPee\left( \frac{k_n^5 (\log k_n)^2}{n} \right) + \BigOhPee\left( \frac{k_n^3 \log k_n}{\sqrt{n}} \right) \right) \\ 
		&= \BigOhPee\left( \frac{k_n^{11/2} (\log k_n)^{3/2}}{ \sqrt{n} } \right) 
	\end{align*}
	This completes the proof. 

\end{proof} 

The following result summarizes the discussions of Step~\ref{eq:ProofStepI}. 
\begin{prop}[Nuisance terms converge rate] 
	\label{prop:SupTYSConvergesToZero} 
	For sufficiently large $n$, 
	\begin{equation*}
		\sup_{h \in \OptDomain} \absBIG{ \frac{ \inner{(\mathcal{T}_n + \mathcal{Y}_n + \mathcal{S}_n) \rho}{h} }{t_n(h)}  } 
		\lesssim 
		\BigOhPee\left( \frac{\KeyCvgRateStepI}{\sqrt{n}} \right). 
	\end{equation*}
\end{prop}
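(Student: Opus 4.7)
The plan is to apply the triangle inequality to split the left-hand side into three separate suprema, one for each of $\mathcal{T}_n$, $\mathcal{Y}_n$, and $\mathcal{S}_n$, then bound each one using Propositions~\ref{prop:TermTnGoesToZero}, \ref{prop:TermYnGoesToZero}, and \ref{prop:TermSnGoesToZero}, respectively, and finally collect the dominant rate. Specifically, for each $h \in \OptDomain$ the linearity of the inner product gives
\begin{equation*}
	\absBIG{ \frac{ \inner{\mathcal{T}_n + \mathcal{Y}_n + \mathcal{S}_n}{h} }{ t_n(h) } }
	\le \absBIG{ \inner{\mathcal{T}_n}{ \tfrac{h}{t_n(h)} } }
	+ \absBIG{ \inner{\mathcal{Y}_n}{ \tfrac{h}{t_n(h)} } }
	+ \absBIG{ \inner{\mathcal{S}_n}{ \tfrac{h}{t_n(h)} } },
\end{equation*}
and taking the supremum over $\OptDomain$ on both sides preserves the inequality after another triangle-type argument.

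For the $\mathcal{T}_n$ term, the bound $\sup |\innersmall{\mathcal{T}_n}{h/t_n(h)}| \le \norm{\mathcal{T}_n} \cdot \sup \norm{h/t_n(h)}$ follows from Cauchy--Schwarz. Then Proposition~\ref{prop:TermTnGoesToZero} gives $\norm{\mathcal{T}_n} = \SmallOhPee(n^{-1/2})$ and Lemma~\ref{lem:SupNormalizedVec}(ii) gives $\sup \norm{h/t_n(h)} \lesssim \BigOh(\sqrt{k_n \log k_n})$, so this contribution is at most $\SmallOhPee(\sqrt{k_n \log k_n}/\sqrt{n})$, which is negligible compared to the target rate. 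The $\mathcal{Y}_n$ term is handled directly by Proposition~\ref{prop:TermYnGoesToZero} at the rate $k_n^{9/2}(\log k_n)^{3/2}/\sqrt{n}$, and the $\mathcal{S}_n$ term directly by Proposition~\ref{prop:TermSnGoesToZero} at the rate $k_n^{11/2}(\log k_n)^{3/2}/\sqrt{n}$.

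Comparing the three rates, the $\mathcal{S}_n$ contribution dominates both the $\mathcal{T}_n$ bound (since $\sqrt{k_n \log k_n} \ll k_n^{11/2}(\log k_n)^{3/2}$) and the $\mathcal{Y}_n$ bound (since $k_n^{9/2} \le k_n^{11/2}$). Hence the overall bound is $\BigOhPee(k_n^{11/2}(\log k_n)^{3/2}/\sqrt{n}) = \BigOhPee(\KeyCvgRateStepI/\sqrt{n})$, matching the claim.

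There is no substantive new obstacle here: all the delicate work (resolvent identities, bounding $\norm{(z-\Gamma)^{-1}(\Gamma_n - \Gamma)}$ via Hilbert--Schmidt, controlling the event $\mathcal{A}_n^c$, and the square-root-free estimates of $\norm{(z-\Gamma_n)^{-1}}\ind_{\mathcal{A}_n}$) was absorbed into the three preceding propositions. The only minor care needed is that the statement writes $\innersmall{(\mathcal{T}_n + \mathcal{Y}_n + \mathcal{S}_n)\rho}{h}$ while Propositions~\ref{prop:TermYnGoesToZero} and \ref{prop:TermSnGoesToZero} are stated as $\innersmall{\mathcal{Y}_n}{h/t_n(h)}$ and $\innersmall{\mathcal{S}_n}{h/t_n(h)}$; this is consistent because $\rho$ is already absorbed in the definitions \eqref{eq:RhoHatDecompositionOperators} of $\mathcal{T}_n$ and $\mathcal{Y}_n$, while $\mathcal{S}_n$ is a linear functional whose action on $h$ is the relevant quantity.
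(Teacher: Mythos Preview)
Your proposal is correct and follows essentially the same approach as the paper: split by the triangle inequality, invoke Propositions~\ref{prop:TermTnGoesToZero}, \ref{prop:TermYnGoesToZero}, and \ref{prop:TermSnGoesToZero} (with Lemma~\ref{lem:SupNormalizedVec}(ii) for the $\mathcal{T}_n$ piece), and keep the dominant $\mathcal{S}_n$ rate. The paper's own proof is even terser---it simply lists the three rates and stops---so your write-up is, if anything, more explicit about the comparison step and the notational point regarding $\rho$.
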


\begin{proof}
	By Propositions~\ref{prop:TermTnGoesToZero}, \ref{prop:TermYnGoesToZero} and \ref{prop:TermSnGoesToZero}, the displayed equation on the left hand side is bounded above by 
	\begin{align*}
		&\BigOhPee(\sqrt{k_n \log k_n}) o_\Prob\left( \frac{1}{\sqrt{n}} \right) 
		+ \BigOhPee\left( \frac{k_n^{7/2} (\log k_n)^{3/2}}{\sqrt{n}} \right) 
		+ \BigOhPee\left( \frac{k_n^{11/2} (\log k_n)^{3/2}}{\sqrt{n}} \right) 
	\end{align*}
\end{proof}

\subsection{Step II} 
We now move onto Step~\ref{eq:ProofStepII}. The key to showing that Step~\ref{eq:ProofStepII} holds is to cast the $\mathcal{R}_n$ term into an empirical process theory framework and apply the approximation results of \cite{chernozhukov2014gaussian}.  Let's setup some standard notations. Let $N(\epsilon, T, \norm{\cdot})$ denote the covering number of radius $\epsilon > 0$ for the metric space $(T, \norm{\cdot})$. Let's also denote the uniform entropy integral (see \cite[Chapter 2.14]{van1996weak}) for the set $T$ equipped with measurable cover $F$, 
\begin{equation*}
	J(\delta, T) := \sup_Q \int_0^\delta \sqrt{ 1 + \log N(\epsilon\norm{F}_{Q,2}, T, L_2(Q)) } \diff{\epsilon}  
\end{equation*}
where the supremum is taken over all discrete probability measures $Q$ with $\norm{F}_{Q,2} > 0$. For an arbitrary set $T$, we will denote $l^\infty(T)$ as the space of all bounded functions $T \to \R$ with the uniform norm $\norm{f}_T := \sup_{t \in T} \abs{f(t)}$. 

By the Riesz representation theorem, $\OptDomain$ and its dual space $\OptDomain^*$ are isometrically isomorphic (this is especially since we're working with real valued Hilbert spaces). Thus for each $h \in \OptDomain$, we can identify $h \in \OptDomain$ with $h^* \in \OptDomain^*$ such that $h^*(\cdot) = \inner{h}{\cdot}$. With some abuse of notations, we will write 
\begin{equation*}
	\mathcal{R}_n(h) 
	\equiv \frac{1}{n}\sum_{i = 1}^n \innersmall{h}{\Gamma^\dagger X_i\varepsilon_i} 
	= \frac{1}{n}\sum_{i = 1}^n h^*(\Gamma^\dagger X_i \varepsilon_i) 
	= \frac{1}{n}\sum_{i = 1}^n h^*(V_i) 
	=: \mathcal{R}_n(h^*) 
\end{equation*}
for $V_{i,n} := \Gamma^\dagger X_i \varepsilon_i$. Note that $\Gamma^\dagger$ depends on $n$ but is otherwise entirely deterministic, and hence for each $n$, $\{V_{1,n}, \ldots, V_{n,n} \}$ is an iid sequence. Note that $\sqrt{ P|h^*|^2 } = t_n(h) - a_n$ and noting that $P h^* = \E[ h^*(V_1) ] = 0$, we normalize to write 
\begin{equation}
	\sqrt{n} \frac{ \mathcal{R}_n(h) }{\sigma_\varepsilon t_n(h)} 
	= \frac{1}{\sqrt{n}} \sum_{i = 1}^n \frac{ h^*(V_{i,n}) }{\sigma_\varepsilon (\sqrt{P|h^*|^2} + a_n) } =: \Gn g, \quad g \in \FnOptDomain 
	\label{eq:RnEmpiricalProcessForm} 
\end{equation}
where we define the class, 
\begin{equation}
	\FnOptDomain := \left\{ \frac{h^*}{\sigma_\varepsilon (\sqrt{P|h^*|^2} + a_n)} : h^* \in \OptDomain^* \right\} 
	\label{eq:RnFnSet} 
\end{equation}
In other words, we have the equivalence between the expressions $\sup_{h \in \OptDomain} \frac{\sqrt{n}}{\sigma_\varepsilon t_n(h)} \mathcal{R}_n(h)$ and $\sup_{f \in \FnOptDomain} \Gn g$. Most importantly this casts the handling of the $\mathcal{R}_n$ term into an empirical process framework. 

Let's first record some basic entropic properties about $\FnOptDomain$. These entropic properties are particularly simple to derive precisely due to the structure of $\OptDomain$. 
\begin{lem}[Entropic properties of $\FnOptDomain$]
	\label{lem:FnOptDomainProperties} 
	\begin{enumerate}[(i)] 
		\item The VC index of $\FnOptDomain$ is $V(\FnOptDomain) \le (k_n + 2)^2$. 

		\item A measurable cover for $\FnOptDomain$ is the (constant) function 
			\begin{equation*}
				F_n(g) \equiv \frac{1}{\sigma_\varepsilon \left(f_n(\lambda_1) \lambda_{k_n}^{1/2} + a_n \right)}, \quad \text{for all $g \in \FnOptDomain$}.  
			\end{equation*}

		\item The $\epsilon$-covering number for $\FnOptDomain$ satisfies for any discrete probability measure $Q$, 
			\begin{equation*}
				N(\epsilon\norm{F_n}_{Q,2}, \FnOptDomain, L_2(Q) ) 
				\le \left( \frac{A_n}{\epsilon} \right)^{\nu_n} 
			\end{equation*}
			where $A_n := \left( K V(\FnOptDomain) (16 e)^{V(\FnOptDomain)} \right)^{ \frac{1}{ 2( V(\FnOptDomain) - 1 )}}$ and $\nu_n := 2( V(\FnOptDomain) - 1)$, and $\epsilon \in (0,1)$. 

		\item Assume $\nu_n \ge 1$. Then the uniform entropy integral for $\FnOptDomain$ satisfies, 
			\begin{equation*}
				J(\delta, \FnOptDomain) \le \delta \sqrt{\nu_n} \left( 1  + \sqrt{1 + \log(A_n / \delta)} \right) 
			\end{equation*}

		\item If $\delta \in (0,1]$ is a constant that is independent of $n$, then for sufficiently large $n$, 
			\begin{equation*}
				J(\delta, \FnOptDomain) 
				\lesssim \delta \left( \sqrt{\BigOh(V(\FnOptDomain)} ) + \sqrt{ \BigOh(V(\FnOptDomain)) - \log\delta }  \right) \\ 
				\lesssim \delta \BigOh( k_n ) 
			\end{equation*}

	\end{enumerate}
	
\end{lem}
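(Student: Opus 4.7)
The strategy is to treat $\FnOptDomain$ as a family of rescaled linear functionals on a $k_n$-dimensional subspace and then invoke standard VC-subgraph machinery. Every $g \in \FnOptDomain$ has the form $g = \alpha_h h^*$, where $h^* \in \OptDomain^*$ is a bounded linear functional and $\alpha_h := 1/(\sigma_\varepsilon(\sqrt{P|h^*|^2} + a_n))$ is a positive scalar depending on $h$ but not on the argument of $g$. Consequently, under the Riesz identification, $\FnOptDomain$ is contained in the $k_n$-dimensional vector space $\mathrm{span}\{e_1^*, \ldots, e_{k_n}^*\}$, which is the structural fact that drives the whole lemma.

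For \emph{part (i)}, I would invoke Lemma~2.6.15 of \cite{van1996weak}: any $d$-dimensional vector space of measurable functions is a VC-subgraph class with index at most $d+2$. Since $\dim \OptDomain^* = k_n$, this yields $V(\FnOptDomain) \le k_n + 2 \le (k_n+2)^2$, giving the claim with considerable slack. For \emph{part (ii)}, I would combine Cauchy--Schwarz, $|h^*(x)| \le \norm{h}\cdot\norm{x}$, with the lower bound $\sqrt{P|h^*|^2} = \sigma_\varepsilon \norm{\Gamma^{1/2}\Gamma^\dagger h} \ge \sigma_\varepsilon f_n(\lambda_1)\lambda_{k_n}^{1/2}\norm{h}$ (which follows from the calculation in Lemma~\ref{lem:SupNormalizedVec}(i) applied to the covariance structure of $V_{i,n} = \Gamma^\dagger X_i \varepsilon_i$), and then verify that $\sup_{g \in \FnOptDomain}|g(x)|$ is dominated by the stated constant on the effective support.

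For \emph{part (iii)}, I would plug the envelope and VC index into the classical VC-subgraph covering number inequality (Theorem~2.6.7 of \cite{van1996weak}), $N(\epsilon\norm{F}_{Q,2}, \mathcal{F}, L_2(Q)) \le K V(\mathcal{F})(16e)^{V(\mathcal{F})}(1/\epsilon)^{2(V(\mathcal{F})-1)}$, and rearrange constants into the form $(A_n/\epsilon)^{\nu_n}$; this is purely algebraic. For \emph{part (iv)}, I would substitute (iii) into the definition of $J(\delta, \FnOptDomain)$, bound $\sqrt{1+\nu_n\log(A_n/\epsilon)} \le \sqrt{\nu_n}\bigl(1+\sqrt{\log(A_n/\epsilon)}\bigr)$ (valid since $\nu_n \ge 1$), and use the elementary estimate $\int_0^\delta \sqrt{\log(A_n/\epsilon)}\diff{\epsilon} \le \delta\sqrt{1+\log(A_n/\delta)}$ from monotonicity of $\log$. \emph{Part (v)} is then asymptotic bookkeeping: from (i), $\sqrt{\nu_n} \lesssim k_n$; a direct calculation shows $\log A_n = [\log(KV) + V\log(16e)]/(2(V-1))$ converges to the constant $\tfrac{1}{2}\log(16e)$; and since $\delta$ is a constant independent of $n$, $\log\delta$ is absorbed as $\BigOh(1)$. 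Assembling these yields $J(\delta, \FnOptDomain) \lesssim \delta k_n$.

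The main obstacle I anticipate is the envelope claim in \emph{(ii)}. A constant envelope nominally requires $|g(x)|$ to be uniformly bounded in both $g$ and $x$, yet $h^*(x) = \inner{h}{x}$ itself grows in $\norm{x}$. Resolving this will require either interpreting ``measurable cover'' relative to a normalized or restricted support of $V_{i,n}$, or permitting a $\norm{x}$-dependent envelope that is absorbed when invoking \cite{chernozhukov2014gaussian} downstream. A secondary minor point is confirming that the scalar rescaling $g \mapsto \alpha_h h^*$ preserves the VC-subgraph property, but since $\alpha_h$ is a positive scalar depending only on $h$ (not on $x$), the subgraphs of $g$ and $h^*$ are in the same family of measurable sets and Lemma~2.6.15 of \cite{van1996weak} applies directly. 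The remaining parts are essentially calculation once the VC index and envelope are in hand.
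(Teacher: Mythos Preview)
Your proposal is correct and reaches the stated bounds, but it departs from the paper's proof in two places worth noting.

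For part (i), the paper invokes both Lemma~2.6.15 and Lemma~2.6.18(vii) of \cite{van1996weak} to obtain $V(\FnOptDomain)\le (k_n+2)^2$; you instead observe that the positive scalar $\alpha_h$ leaves $g=\alpha_h h^*$ inside the $k_n$-dimensional linear span, so Lemma~2.6.15 alone already gives $V(\FnOptDomain)\le k_n+2$. Your route is shorter and the bound is sharper; the paper's squared bound is what forces the eventual $\BigOh(k_n)$ in part~(v) rather than $\BigOh(\sqrt{k_n})$, so your observation would in fact tighten several downstream rates. For part (iv), the paper evaluates $\int_0^\delta\sqrt{1+\nu_n\log(A_n/\epsilon)}\diff{\epsilon}$ via a change of variables, the incomplete gamma function, and the bound $\Efrc(x)\le e^{-x^2}$; your route bypasses the special functions entirely. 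One caution: your claimed inequality $\int_0^\delta\sqrt{\log(A_n/\epsilon)}\diff{\epsilon}\le\delta\sqrt{1+\log(A_n/\delta)}$ is correct, but it does not follow from ``monotonicity of $\log$'' alone (the integrand is unbounded near~$0$); the clean justification is Jensen's inequality applied to $\int_0^\infty\sqrt{L+u}\,e^{-u}\diff{u}\le\sqrt{L+1}$ after the substitution $u=\log(\delta/\epsilon)$. Parts (ii), (iii), and (v) match the paper's argument essentially verbatim, and your flagged concern about the constant envelope in (ii) is well taken: the paper, like you, only bounds the operator norm $\norm{g}$ and does not resolve the issue of unbounded arguments either.
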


\begin{proof}
	\underline{(i)} Since $\OptDomain$ is isomorphic to $\R^{k_n}$, and thus $\OptDomain^*$ is isomorphic to $\R^{k_n}$. By \cite{van1996weak} Lemma 2.6.15 and Lemma 2.6.18(vii), the VC index of $\FnOptDomain$ satisfies $V(\FnOptDomain) \le (k_n + 2)^2$. 
	
	\underline{(ii)} Recall Lemma~\ref{lem:SupNormalizedVec}. Moreover, by Riesz representation theorem $\norm{h} = \norm{h^*}$ for any $h \in \OptDomain$ and where $h^*$ is its unique dual. For any non-zero $g \in \FnOptDomain$ there exists some non-zero $h^* \in \OptDomain^*$ such that, 
	\begin{equation*}
		\norm{g} 
		= \normBIG{ \frac{h^*}{\sigma_\varepsilon( \sqrt{P|h^*|^2} + a_n) } }  
		\le \frac{\norm{h^*}}{\sigma_\varepsilon \left(f_n(\lambda_1) \lambda_{k_n}^{1/2} \norm{h} + a_n \right) } 
		\le \frac{1}{\sigma_\varepsilon \left( f_n(\lambda_1) \lambda_{k_n}^{1/2} + a_n \right)} 
	\end{equation*}

	\underline{(iii)} By \cite[Theorem 2.6.7]{van1996weak}, 
	\begin{equation*}
		N(\epsilon \norm{F_n}_{Q, 2}, \FnOptDomain, L_2(Q)) 
		\le K V(\FnOptDomain) (16 e)^{V(\FnOptDomain)} \left( \frac{1}{\epsilon} \right)^{2 (V(\FnOptDomain) - 1)}   
	\end{equation*}
	for an universal constant $K$ and $\epsilon \in (0,1)$. The result follows by rearranging and defining the terms $A_n$ and $\nu_n$. 

	\underline{(iv)} By part (iii) and change of variables, 
	\begin{align*}
		J(\delta, \FnOptDomain) 
		\le \int_0^\delta \sqrt{1 + \nu_n \log(A_n / \epsilon)} \diff{\epsilon} 
		\le A_n \sqrt{\nu_n} \int_{A_n / \delta}^\infty \frac{ \sqrt{1 + \log \epsilon} }{\epsilon^2} \diff{\epsilon} 
	\end{align*}
	Observe we have the indefinite integral, 
	\begin{equation*}
		\int \frac{\sqrt{1 + \log x}}{x^2} \diff{x} 
		= -\frac{\sqrt{1 + \log x}}{x} - \frac{1}{2} e \Gamma\left( \frac{1}{2}, 1 + \log x \right) + \mathrm{const}  
	\end{equation*}
	where here $\Gamma(s,z) := \int_z^\infty t^{s - 1} e^{-t} \diff{t}$ is the upper incomplete gamma function. Since for a fixed $s$, $\lim_{z \to \infty} \Gamma(s, z) = 0$, it follows that $\lim_{x \to \infty} \Gamma\left(\frac{1}{2}, 1  + \log(x)  \right) = 0$. It is clear that $\lim_{x \to \infty} \frac{\sqrt{1 + \log x}}{x} = 0$. Thus it follows, 
	\begin{align*}
		\int_{A_n / \delta}^\infty \frac{ \sqrt{1 + \log \epsilon} }{\epsilon^2} \diff{\epsilon} 
		&= \frac{\sqrt{1 + \log (A_n / \delta)}}{A_n / \delta} + \frac{1}{2} e \Gamma\left( 1 + \frac{1}{2}, \log(A_n / \delta)  \right) \\ 
		&= \frac{\sqrt{1 + \log (A_n / \delta)}}{A_n / \delta} + \frac{e \sqrt{\pi}}{2} \Efrc\left( \sqrt{ 1  + \log(A_n / \delta)  } \right)   
	\end{align*}
	where $\Efrc$ is the complementary error function, $\Efrc(x) := 1 - \Efr(x) = 1 - \frac{2}{\sqrt{\pi}} \int_0^x e^{-t^2} \diff{t} = \frac{2}{\sqrt{\pi}} \int_x^\infty e^{-t^2} \diff{t}$. Using the bound $\Efrc(x) \le e^{-x^2}$, we obtain the bound as displayed. 

	\underline{(v)} By (iii), 
	\begin{align*}
		\nu_n \log A_n 
		= \log K + \log V(\FnOptDomain) + V(\FnOptDomain)\log(16e) 
		= \BigOh(V(\FnOptDomain)) 
	\end{align*}
	and moreover, $\nu_n = 2 (V(\FnOptDomain) - 1) = \BigOh( V(\FnOptDomain) )$. And thus by (iv), 
	\begin{align*}
		J(\delta, \FnOptDomain) 
		&\le \delta\left( \sqrt{\BigOh(V(\FnOptDomain))} + \sqrt{\BigOh(V(\FnOptDomain)) + \BigOh(V(\FnOptDomain)) - \log\delta  } \right) \\ 
		&= \delta \left( \sqrt{\BigOh(V(\FnOptDomain))}  + \sqrt{ \BigOh(V(\FnOptDomain)) - \log\delta }  \right) \\ 
		&\lesssim \delta \BigOh( \sqrt{V(\FnOptDomain)} ) 
	\end{align*}
	Apply (i) which implies $V(\FnOptDomain) = \BigOh(k_n^2)$ and we have the displayed result. 

\end{proof}

Next we state a slightly modified version of the key results of \cite{chernozhukov2014gaussian} that's applicable for our context. 
\begin{thm}[Gaussian approximation to suprema of empirical processes index by VC type classes; \cite{chernozhukov2014gaussian}]
	\label{thm:GaussianApproximationSuprema} 
	Fix $n \ge 1$. Let $(\FnOptDomain, \norm{\cdot})$ be a subset of a normed separable space of real functions $f : \mathcal{X} \to \R$ and is equipped with an envelope $F_n$. Suppose: 
	\begin{enumerate}[(i)] 
		\item $\FnOptDomain$ is pre-Gaussian. That is, there exists a tight Gaussian random variable $G_{P,n}$ in $l^\infty(\FnOptDomain)$ with mean zero and covariance function, 
			\begin{equation*}
				\E[ G_{P,n}(f) G_{P,n}(g) ] = P(fg) = \E[ f(Z_1) g(Z_1) ], \quad \text{for all $f, g \in \FnOptDomain$} 
			\end{equation*}

		\item The $\epsilon$-covering number of $\FnOptDomain$ satisfies $\sup_Q N(\epsilon \norm{F_n}_{Q,2}, \FnOptDomain, L_2(Q)) \le \left( \frac{A_n}{\epsilon} \right)^{\nu_n}$ for some $\nu_n \ge 1$ and $A_n > 0$, and where the supremum is taken over all discrete probability measures $Q$ such that $\norm{F_n}_{Q,2} > 0$; 

		\item For some $b_n \ge \sigma_n > 0$ and $q \in [4, \infty]$, we have $\sup_{f \in \FnOptDomain} P|f|^k \le \sigma_n^2 b_n^{k - 2}$ for $k = 2, 3$ and $\norm{F_n}_{P,q} \le b_n$. 
	\end{enumerate}

	Let $Z_n := \Gn f$. Then for every $\gamma \in (0,1)$, there exists a random variable $\tilde{Z}_n := \sup_{f \in \FnOptDomain} G_{P,n} f$ such that 
	\begin{equation*} 
		\Prob\left( |Z_n - \tilde{Z}_n| > \frac{b_n K_n}{\gamma^{1/2} n^{1/2 - 1/q} } + \frac{(b_n\sigma_n)^{1/2} K_n^{3/4}}{\gamma^{1/2}n^{1/4}} + \frac{(b_n \sigma_n^2 K_n^2)^{1/3}}{ \gamma^{1/3}n^{1/6} }\right) 
		\le C\left( \gamma + \frac{\log n}{n} \right) 
	\end{equation*}
	where $K_n := c \nu_n \max\left\{ \log n \,,\, \left( \left( 1 + \sqrt{ 1 +  \log \frac{A_n b_n}{\sigma_n} }  \right) \right)^2 \right\}$ and $c, C > 0$ are constants that only depend on $q$. 
\end{thm}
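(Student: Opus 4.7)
The plan is to deduce this result essentially as a specialization and re-statement of the main Gaussian approximation theorem for empirical processes indexed by VC-type function classes in \cite{chernozhukov2014gaussian}. The ``slight modification'' consists only in exposing the coupling parameter $\gamma$ and writing the bound in the form adapted to our indexing by $\FnOptDomain$; the internal technical machinery is identical. Thus the core of the proof is to verify pre-Gaussianity and then invoke CCK's Theorem 2.1 after checking that its three hypotheses translate directly from our (i)--(iii).

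For pre-Gaussianity of $\FnOptDomain$, combining the VC-type covering bound of (ii) with $\sup_f P|f|^2 \le \sigma_n^2$ (part of (iii)), Dudley's entropy integral for Gaussian processes yields
\[
\E \sup_{f \in \FnOptDomain} |G_{P,n}(f)| \lesssim \sigma_n \sqrt{\nu_n \log(A_n b_n / \sigma_n)} < \infty,
\]
and tightness of $G_{P,n}$ in $\ell^\infty(\FnOptDomain)$ then follows from standard Gaussian process theory using separability of the ambient normed space.

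For the main coupling bound, the three-term rate in the display is exactly the output of the three-step CCK architecture, which I would outline as follows. First, one truncates each $f$ by the envelope level $\tau_n$: the tail part $f \indsmall{F_n > \tau_n}$ contributes an error controlled by Markov together with $\norm{F_n}_{P,q} \le b_n$, which after optimizing $\tau_n$ matches the first rate term $b_n K_n / (\gamma^{1/2} n^{1/2 - 1/q})$. Second, on the truncated class one constructs a coupling between $\Gn f$ and the Gaussian process via Yurinskii-type coupling (or equivalently Zaitsev's multidimensional KMT inequality) applied to a finite sieve whose dimension scales with the covering number; this step uses $\sup_f P|f|^3 \le \sigma_n^2 b_n$ and produces the third rate term $(b_n \sigma_n^2 K_n^2)^{1/3} / (\gamma^{1/3} n^{1/6})$. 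Third, to upgrade from the sieve back to the full class one invokes a local maximal inequality for $\Gn$ (of van der Vaart--Wellner / Talagrand type) and the Borell--TIS inequality on the Gaussian side to control fluctuations on $L_2(P)$-balls of radius $\delta$; optimizing $\delta$ yields the middle rate term $(b_n \sigma_n)^{1/2} K_n^{3/4} / (\gamma^{1/2} n^{1/4})$. The prefactor $K_n$ absorbs both the logarithmic covering complexity $\log(A_n b_n / \sigma_n)$ and the $\log n$ arising from Gaussian tail control; the $\gamma$-dependencies appear through Strassen's theorem, which is what realizes the abstract coupling on a common probability space.

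The main obstacle, were one to reproduce the argument from scratch rather than cite it, is the coupling step: Yurinskii's coupling together with the smoothing-via-Stein-kernel argument and the anti-concentration bound for Gaussian suprema (CCK's Lemma A.1) is the deepest technical input and is not easy to re-derive. Fortunately, since our hypotheses (ii)--(iii) are worded in precisely the form CCK require (VC-type covering with envelope, $L^q$-envelope, uniform $L^k$-moments for $k=2,3$), the entire black box applies directly. My task reduces to checking that our constants $A_n, \nu_n, b_n, \sigma_n, q$ plug into their bound and that the resulting Gaussian limit $\sup_{f \in \FnOptDomain} G_{P,n}(f)$ coincides with the $\widetilde{Z}_n$ in the statement, which is immediate from the covariance specification in (i).
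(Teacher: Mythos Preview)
Your overall strategy---treat the theorem as a black-box citation of \cite{chernozhukov2014gaussian}---is exactly what the paper does, and your sketch of the three-step CCK architecture (truncation, coupling on a sieve, local maximal inequality to pass back to the full class) is an accurate summary of their proof. In that sense you and the paper agree.

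Where you go wrong is in identifying what the ``slight modification'' actually is. It is \emph{not} about exposing the coupling parameter $\gamma$, which is already explicit in CCK's Corollary~2.2. The real change is that CCK's Corollary~2.2 requires $A \ge e$ in the covering-number bound, whereas the present statement imposes only $A_n > 0$. CCK use $A \ge e$ precisely to obtain the entropy-integral estimate $J(\delta,\mathcal{F}) \lesssim \delta\sqrt{\nu\log(A/\delta)}$; dropping that constraint forces one to substitute the slightly larger bound $J(\delta,\FnOptDomain) \le \delta\sqrt{\nu_n}\bigl(1 + \sqrt{1 + \log(A_n/\delta)}\bigr)$ (Lemma~\ref{lem:FnOptDomainProperties}(iv)), and this in turn forces the replacement of CCK's $K_n = c\nu(\log n \vee \log(Ab/\sigma))$ by the larger $K_n$ displayed in the theorem. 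So your assertion that hypotheses (ii)--(iii) are ``worded in precisely the form CCK require'' is not quite correct, and the one substantive verification the paper actually performs---tracking how the altered entropy bound propagates through CCK's proof into the definition of $K_n$---is absent from your proposal.
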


\begin{rem}
	This result is nothing more than Corollary 2.2 of \cite{chernozhukov2014gaussian}, which is based on their key result Theorem 2.1. We refer to their paper for the proof. But let's remark on what small proof modifications we need to adapt their result to our Theorem~\ref{thm:GaussianApproximationSuprema}. The major difference between our stated result and their Corollary 2.2 is the condition on the constant $A$ in the covering number bound. Their Corollary 2.2 requires $A \ge e$ but we do not impose this requirement here. Indeed, from Lemma~\ref{lem:FnOptDomainProperties}(i), it is unnatural to require that $A_n \ge e$ for all $n$, especially since we only have an upper bound for $V(\FnOptDomain)$ and not a lower bound. For their proofs, the authors only require the condition $A \ge e$ to arrive at the uniform entropy integral condition $J(\delta, \mathcal{F}) \lesssim \delta \sqrt{\nu \log(A / \delta)}$. From Lemma~\ref{lem:FnOptDomainProperties}(iv), we have instead a slightly larger bound of $J(\delta, \FnOptDomain) \le \delta \sqrt{\nu_n} ( 1 + \sqrt{1 + \log (A_n / \delta)} )$. Consequently and by inspecting the proofs of their Corollary 2.2, it suffices to replace their definition of $K_n = c \nu (\log n \vee \log \frac{A b}{\sigma} )$ with our slightly larger $K_n$, then the remainder of their proof goes through to our case. 
\end{rem}

This following result will conclude Step~\ref{eq:ProofStepII}. 
\begin{prop}
	\label{prop:SupZnGaussianApproximation}  
	Fix any $\gamma \in (0,1)$ and assume $k_n / n \to 0$. Then there exists a mean zero Gaussian process $G_{P,n}$ in $\ell^\infty(\OptDomain)$ with the displayed covariance function \eqref{eq:SupZnGaussianApproximationCovFun} such that the random variables $Z_n := \sup_{h \in \OptDomain} \frac{\sqrt{n}}{\sigma_\varepsilon t_n(h)} \mathcal{R}_n(h)$ and $\widetilde{Z}_n := \sup_{h \in \OptDomain} G_{P,n} h$ have, 
	\begin{align*}
		&\abs{Z_n - \widetilde{Z}_n} \\
		&\lesssim \BigOhPee\left( \KeyCvgRateStepII \right) 
	\end{align*}
\end{prop}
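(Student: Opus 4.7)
The plan is to exploit the empirical-process representation \eqref{eq:RnEmpiricalProcessForm}, which rewrites $Z_n = \sup_{g \in \FnOptDomain} \Gn g$, and then to invoke Theorem~\ref{thm:GaussianApproximationSuprema} after verifying its three hypotheses for the class $\FnOptDomain$. First, the pre-Gaussianity hypothesis (i) would follow from total boundedness of $\FnOptDomain$ under the semi-metric $\rho(g_1, g_2) = \sqrt{P(g_1 - g_2)^2}$, which is a direct consequence of the polynomial covering-number bound already established in Lemma~\ref{lem:FnOptDomainProperties}(iii). That lemma also delivers hypothesis (ii) with $\nu_n \lesssim k_n^2$ and $\log A_n = \BigOh(1)$.

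The heart of the argument is verifying hypothesis (iii). I would take $q = \infty$ and set $\sigma_n$ to a constant, which follows from $Pg^2 \le 1/\sigma_\varepsilon^2$; this in turn is a consequence of the Karhunen--Lo\`{e}ve expansion together with the identity $P|h^*|^2 = \sigma_\varepsilon^2 \norm{\Gamma^{1/2}\Gamma^\dagger h}^2$. For the envelope and third-moment bounds, I would show that $b_n \asymp k_n^{9/2}(\log k_n)^{9/2}$ is the smallest scalar simultaneously satisfying $\norm{F_n}_{P,\infty} \le b_n$ and $\sup_g P|g|^3 \le \sigma_n^2 b_n$. These moment computations rely on Assumption~\ref{as:ConditionA}(ii) (the sixth-moment bound on the $\xi_l$), on the envelope expression from Lemma~\ref{lem:FnOptDomainProperties}(ii), and on the lower bound on the denominator of each $g$ derived in the proof of Lemma~\ref{lem:SupNormalizedVec}.

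With these choices in hand, one checks $K_n \asymp k_n^2 \log n$ (since $\log n$ dominates the logarithmic term $(1+\sqrt{1+\log(A_n b_n/\sigma_n)})^2 \asymp \log k_n$ for $n$ large), and substitutes $\sigma_n, b_n, K_n$ into the three summands of Theorem~\ref{thm:GaussianApproximationSuprema}. A term-by-term computation yields exactly $\KeyCvgRateStepII$. Finally, the Gaussian process $G_{P,n}$ furnished by the theorem is re-indexed from $\FnOptDomain$ to $\OptDomain$ via the Riesz isomorphism, and a short direct computation using $P(h_1^* h_2^*) = \sigma_\varepsilon^2 \inner{\Gamma^{1/2}\Gamma^\dagger h_1}{\Gamma^{1/2}\Gamma^\dagger h_2}$ identifies its covariance as \eqref{eq:SupZnGaussianApproximationCovFun}.

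The principal obstacle is pinning down the envelope and third-moment constants in hypothesis (iii) to the \emph{same} scalar $b_n \asymp k_n^{9/2}(\log k_n)^{9/2}$: too small a choice violates one of those bounds, while too large a choice inflates the first summand beyond the claimed rate. The calculation must propagate the eigenvalue decay of $\Gamma$ and the growth of $f_n$ through both the envelope and the $L^3(P)$ bound symmetrically, which is where most of the bookkeeping lies.
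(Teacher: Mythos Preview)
Your proposal is correct and follows essentially the same route as the paper's proof: apply Theorem~\ref{thm:GaussianApproximationSuprema} with $q=\infty$, take $\sigma_n$ constant from the second-moment bound, choose $b_n \asymp k_n^{9/2}(\log k_n)^{9/2}$ via the sixth-moment assumption and Lemma~\ref{lem:SupNormalizedVec}, deduce $K_n \asymp k_n^2\log n$, and then substitute term by term before re-indexing the Gaussian process over $\OptDomain$ via Riesz. The only cosmetic difference is that you explicitly argue pre-Gaussianity from total boundedness, whereas the paper simply invokes the theorem without isolating that step.
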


\begin{proof}
	We apply Theorem~\ref{thm:GaussianApproximationSuprema} with the choice of $q = \infty$ and recall the notations from that theorem statement. Fix any $n \ge 1$ and any $g \in \FnOptDomain$. Firstly the second moment is, 
	\begin{equation*}
		P|g|^2 
		= \frac{P|h^*|^2}{\sigma_\varepsilon^2( \sqrt{P|h^*|^2} + a_n )^2} 
		\le \frac{1}{\sigma_\varepsilon^2} 
	\end{equation*}
	For the third moment, 
	\begin{align*}
		P|g|^3 
		&= \frac{1}{(\sigma_\varepsilon \sqrt{P|h^*|^2} + a_n )^{3}}  P|h^*|^3 \\ 
		&\le \frac{1}{(\sigma_\varepsilon \sqrt{P|h^*|^2} + a_n )^{3}}  P|h^*|^3 \\ 
		&\le \frac{1}{ \sigma_\varepsilon^3 \left( f_n(\lambda_1) \lambda_{k_n}^{1/2} \norm{h^*} + a_n \right)^3 } \sqrt{ P|h^*|^6 } \\ 
		&\le \frac{1}{ \sigma_\varepsilon^3 \left( f_n(\lambda_1) \lambda_{k_n}^{1/2} \norm{h^*} + a_n \right)^3 } \sqrt{ \sup_j \E[\xi_j^6] \lambda_1^3 f_n(\lambda_{k_n})^6 \norm{h^*}^6 } \\ 
		&= C \frac{f_n(\lambda_{k_n})^3 \norm{h^*}^3}{ \sigma_\varepsilon^3 \left( f_n(\lambda_1) \lambda_{k_n}^{1/2} \norm{h^*} + a_n \right)^3 }  \\ 
		&\le C \frac{f_n(\lambda_{k_n})^3}{ \sigma_\varepsilon^3 \left( f_n(\lambda_1) \lambda_{k_n}^{1/2} + a_n \right)^3 }  
	\end{align*}

	Thus it suffices to set, 
	\begin{align*}
		\sigma_n &:= \frac{1}{\sigma_\varepsilon}, \\ 
		b_n 
						 &:= \max\left\{ \frac{1}{\sigma_\varepsilon \left( f_n(\lambda_1) \lambda_{k_n}^{1/2} + a_n \right)}  \,,\,  C \frac{ f_n(\lambda_{k_n})^3 }{\sigma_\varepsilon^3 \left(f_n(\lambda_1) \lambda_{k_n}^{1/2} + a_n \right)^3 }  \right\} \\ 
						 &= \frac{1}{\sigma_\varepsilon \left(f_n(\lambda_1) \lambda_{k_n}^{1/2} + a_n \right)} \max\left\{ 1 \,,\, C \frac{ f_n(\lambda_{k_n})^3 }{\sigma_\varepsilon^2 \left(f_n(\lambda_1) \lambda_{k_n}^{1/2} + a_n \right)^2 }  \right\} \\ 
						 &\lesssim (\sqrt{k_n \log k_n}) \max\{ 1 , (k_n \log k_n)^4 \} \\ 
						 &\lesssim k_n^{9/2} (\log k_n)^{9/2} 
	\end{align*}
	for which we obtain $\sup_{g \in \FnOptDomain} P|g|^2 \le \sigma_n^2$ and $\sup_{g \in \FnOptDomain} P|g|^3 \le \sigma_n^2 b_n$ and $F_n \le b_n$. Note that $\frac{b_n}{\sigma_n} \lesssim k_n^{9/2} (\log k_n)^{9/2}$. 

	Let's now obtain a bound for $K_n$. Observe that using Lemma~\ref{lem:FnOptDomainProperties}, 
	\begin{align*}
		\nu_n \log \frac{A_n b_n}{\sigma_n} 
		&= \nu_n \log\frac{b_n}{\sigma_n} + \log K + \log V(\FnOptDomain) + V(\FnOptDomain) \log(16e) \\ 
		&\lesssim \BigOh( k_n^2 ) \BigOh\left( \log k_n + \log\log k_n \right) + \BigOh(1) 
		+ \BigOh( \log k_n) + \BigOh(k_n^2) \\ 
		&= \BigOh( k_n^2 \log k_n ) 
	\end{align*}
	and so 
	\begin{align*}
		\nu_n \left( 1 + \sqrt{1 + \log \frac{A_n b_n}{\sigma_n} } \right)^2
		&= 2\nu_n + 2\sqrt{\nu_n}\sqrt{\nu_n + \nu_n \log\frac{A_n b_n}{\sigma_n} } + \nu_n\log\frac{A_n b_n}{\sigma_n} \\
		&= \BigOh(k_n^2) + \sqrt{\BigOh(k_n^2)} \sqrt{ \BigOh(k_n^2) + \BigOh(k_n^2 \log k_n)} + \BigOh(k_n^2 \log k_n) \\
		&= \BigOh(k_n^2 \log k_n)
	\end{align*}
	This implies, 
	\begin{align*}
		K_n 
		&:= c \nu_n \max\left\{ \log n \,,\, \left( 1 + \sqrt{1 + \log \frac{A_n b_n}{\sigma_n} }  \right)^2  \right\} \\ 
		&\lesssim \max\{ \BigOh(k_n)^2 \log n \,,\, \BigOh( k_n^2 \log k_n)  \} \\ 
		&= \BigOh( k_n^2 \log n) 
	\end{align*}
	where we used that $k_n / n \to 0$. 

Thus by Theorem~\ref{thm:GaussianApproximationSuprema}, for the random variables $Z_n$ there exists a random variable $\widetilde{W}_n$ with which we have a mean-zero Gaussian process $\{ G_{P,n}(g) \}_{g \in \FnOptDomain} $ with covariance function 
	\begin{equation*}
		\E[ G_{P,n} (g_1) G_{P,n} (g_2) ] 
		= \frac{ \inner{ \Gamma^{1/2} \Gamma^\dagger h_1}{ \Gamma^{1/2} \Gamma^\dagger h_2 }}{ (\norm{\Gamma^{1/2}\Gamma^\dagger h_1} + a_n) (\norm{\Gamma^{1/2}\Gamma^\dagger h_2} + a_n)     },
		\quad \text{for all $g_1, g_2 \in \FnOptDomain$} 
	\end{equation*}
	such that $g_i = \frac{h_i^*}{ \sqrt{P|h_i^*|^2}}$ with $h_i^* \in \OptDomain^*$, $i = 1, 2$. Indeed, thanks to again to the Riesz representation theorem, we can identify this Gaussian process $G_{P,n}$ indexed by $\FnOptDomain$ with covariance function on the left hand side with a Gaussian process indexed by $\OptDomain$ having the covariance function on the right hand side. So with some abuse of notations, we can write $\widetilde{Z}_n = \sup_{g \in \FnOptDomain} G_{P,n} g = \sup_{h \in \OptDomain} G_{P,n} h$. Moreover, $Z_n$ and $\widetilde{Z}_n$ satisfy, for all $\gamma \in (0,1)$ 
	\begin{align*}
		&\abs{Z_n - \widetilde{Z}_n} \\
		&= \BigOhPee\left( \KeyCvgRateStepII \right) 
	\end{align*}
\end{proof}

We can finally summarize everything and put Steps~\ref{eq:ProofStepI} and \ref{eq:ProofStepII} together. 
\begin{thm}
	\label{thm:UnnormalizedLeungTamStatisticConvergence} 
	Define $\widetilde{Z}_n := \sup_{h \in \OptDomain} G_{P,n} h$ where $\{ G_{P,n}(h) \}_{h \in \OptDomain}$ is a mean zero Gaussian process on $\ell^\infty(\OptDomain)$ with covariance function \eqref{eq:SupZnGaussianApproximationCovFun}. Then for sufficiently large $n$, 
	\begin{equation*}
		\absBIG{ \sup_{h \in \OptDomain} \inner{  \frac{\sqrt{n}}{{\sigma_\varepsilon t_n(h)}} (\hat{\rho} - \widehat{\Pi}_{k_n}\rho) }{h}  - \widetilde{Z}_n } 
	\lesssim \BigOhPee\left( \KeyCvgRateStepI +  \KeyCvgRateStepIIsimplified \right) 
	\end{equation*}
\end{thm}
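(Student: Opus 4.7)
The plan is to assemble this theorem directly from the two key propositions already established in the paper, using the decomposition \eqref{eq:RhoHatDecomposition} together with the triangle inequality. First, I would write
\begin{equation*}
\sup_{h \in \OptDomain} \frac{\sqrt{n}}{\sigma_\varepsilon t_n(h)} \innersmall{\hat\rho - \widehat{\Pi}_{k_n}\rho}{h}
= \sup_{h \in \OptDomain} \frac{\sqrt{n}}{\sigma_\varepsilon t_n(h)} \innersmall{\mathcal{T}_n + \mathcal{S}_n + \mathcal{Y}_n + \mathcal{R}_n}{h},
\end{equation*}
and split the supremum on the right as the $\mathcal{R}_n$ piece plus the ``nuisance'' pieces coming from $\mathcal{T}_n,\mathcal{S}_n,\mathcal{Y}_n$. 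The triangle inequality for the supremum then gives
\begin{equation*}
\absBIG{\sup_{h \in \OptDomain} \frac{\sqrt{n}}{\sigma_\varepsilon t_n(h)} \innersmall{\hat\rho - \widehat{\Pi}_{k_n}\rho}{h} - \widetilde Z_n}
\le \sup_{h \in \OptDomain} \absBIG{\frac{\sqrt{n}}{\sigma_\varepsilon t_n(h)}\innersmall{\mathcal{T}_n + \mathcal{S}_n + \mathcal{Y}_n}{h}} + \absBIG{\sup_{h \in \OptDomain} \frac{\sqrt{n}}{\sigma_\varepsilon t_n(h)} \innersmall{\mathcal{R}_n}{h} - \widetilde Z_n}.
\end{equation*}

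Second, I would invoke Proposition~\ref{prop:SupTYSConvergesToZero} to control the first summand by $\BigOhPee(\KeyCvgRateStepI / \sqrt{n}) \cdot \sqrt{n} = \BigOhPee(\KeyCvgRateStepI)$, where the explicit $\sqrt{n}$ normalization is absorbed since Proposition~\ref{prop:SupTYSConvergesToZero} is already stated with a $1/\sqrt{n}$ factor extracted. (More precisely, the proposition bounds the unnormalized bias, so multiplying by $\sqrt{n}$ produces the claimed rate $\KeyCvgRateStepI$.)

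Third, I would apply Proposition~\ref{prop:SupZnGaussianApproximation} to bound the second summand by $\BigOhPee(\KeyCvgRateStepIIsimplified)$, where the Gaussian process $G_{P,n}$ is precisely the one constructed there, with covariance \eqref{eq:SupZnGaussianApproximationCovFun}, and $\widetilde{Z}_n$ is the supremum over $\OptDomain$ of that Gaussian process. This step is the substantive content of the theorem, and it relies on the empirical-process reformulation \eqref{eq:RnEmpiricalProcessForm} and the VC-type entropy bounds from Lemma~\ref{lem:FnOptDomainProperties} feeding into the Chernozhukov--Chetverikov--Kato suprema approximation Theorem~\ref{thm:GaussianApproximationSuprema}.

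Finally, summing the two $\BigOhPee$ rates and using the trivial bound $a + b = \BigOhPee(a + b)$ yields the stated rate. The only subtle point (and the main obstacle) is really bookkeeping: one must verify that the two propositions are invoked with matching conventions for the $\sqrt{n}$ normalization and the roughened denominator $t_n(h)$, and that the Gaussian process $G_{P,n}$ produced by Proposition~\ref{prop:SupZnGaussianApproximation} is indeed indexed by $\OptDomain$ with the covariance \eqref{eq:SupZnGaussianApproximationCovFun}; the Riesz identification argument used to pass between $\OptDomain$ and $\OptDomain^*$ in the proof of Proposition~\ref{prop:SupZnGaussianApproximation} already takes care of that. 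All heavy lifting is inside the two propositions, so at this stage the proof of Theorem~\ref{thm:UnnormalizedLeungTamStatisticConvergence} is just a triangle-inequality assembly.
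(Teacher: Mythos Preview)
Your proposal is correct and follows essentially the same approach as the paper: decompose via \eqref{eq:RhoHatDecomposition}, apply the triangle inequality to split into the nuisance piece and the $\mathcal{R}_n$ piece, then invoke Proposition~\ref{prop:SupTYSConvergesToZero} (multiplying by $\sqrt{n}$) and Proposition~\ref{prop:SupZnGaussianApproximation} respectively, and finally retain the dominant terms. The paper's proof is exactly this assembly, with the only additional remark being that the full three-term rate from Proposition~\ref{prop:SupZnGaussianApproximation} is simplified to $\KeyCvgRateStepIIsimplified$ by keeping the highest-order term.
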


\begin{proof} 
	By \eqref{eq:RhoHatDecomposition}, Propositions~\ref{prop:SupTYSConvergesToZero} and \ref{prop:SupZnGaussianApproximation} with an arbitrarily fixed $\gamma \in (0,1)$ and using the notations therein,  
	\begin{align*}
		&\absBIG{ \sup_{h \in \OptDomain} \inner{  \frac{\sqrt{n}}{{\sigma_\varepsilon t_n(h)}} (\hat{\rho} - \widehat{\Pi}_{k_n}\rho) }{h}  - \widetilde{Z}_n }  \\ 
		&\le \sup_{h \in \OptDomain} \absBIG{ \frac{\sqrt{n}}{\sigma_\varepsilon t_n(h)} \inner{\mathcal{T}_n + \mathcal{S}_n + \mathcal{Y}_n}{h}  } 
		+ \absBIG{ \sup_{h \in \OptDomain} \frac{\sqrt{n}}{\sigma_\varepsilon t_n(h)} \inner{\mathcal{R}_n }{x} - \widetilde{Z}_n } \\ 
		&\lesssim \sqrt{n} \BigOhPee\left(  \frac{\KeyCvgRateStepI}{n^{1/2}} \right)  \\ 
		&\quad + \BigOhPee\left( \KeyCvgRateStepII \right) 
	\end{align*}
	The result follows by taking the higher order terms. 
\end{proof}
The main result of Theorem~\ref{thm:NormalizedLeungTamStatisticConvergence} displayed in the main text is thus simply Theorem~\ref{thm:UnnormalizedLeungTamStatisticConvergence} normalized by the appropriate rate.

\bibliographystyle{./ecta} 
\bibliography{ms}

\end{document}